\newtheorem{theorem}{Theorem}[section]
\newtheorem{lemma}[theorem]{Lemma}
\newtheorem{remark}[theorem]{Remark}
\renewcommand \theequation {%
\ifnum \c@section>\z@ \@arabic\c@section.%
\fi\@arabic\c@equation} \@addtoreset{equation}{section}
\providecommand{\ud}[1]{\mathrm{d}{#1}}
\providecommand{\abs}[1]{\left\vert#1\right\vert}
\providecommand{\nm}[1]{\left\Vert#1\right\Vert}
\providecommand{\br}[1]{\langle #1 \rangle}
\providecommand{\tm}[2]{\left\Vert#1\right\Vert_{L^2(#2)}}
\providecommand{\im}[2]{\left\Vert#1\right\Vert_{L^{\infty}(#2)}}
\providecommand{\lnnm}[1]{{\left\Vert#1\right\Vert}_{L^{\infty}L^{\infty}}}
\providecommand{\lnm}[1]{\left\Vert#1\right\Vert_{L^{\infty}}}
\providecommand{\tnm}[1]{\left\Vert#1\right\Vert_{L^{2}}}
\providecommand{\tnnm}[1]{{\left\Vert#1\right\Vert}_{L^{2}L^2}}
\providecommand{\ltnm}[1]{{\left\Vert#1\right\Vert}_{L^{\infty}L^{2}}}
\def\p{\partial}
\def\half{\frac{1}{2}}
\def\rt{\rightarrow}
\def\r{\mathbb{R}}
\def\no{\nonumber}
\def\ue{\mathrm{e}}
\def\u{U^{\e}}
\def\ub{\mathscr{U}^{\e}}
\def\bu{\bar U^{\e}}
\def\bub{\bar{\mathscr{U}}^{\e}}
\def\uc{U}
\def\ubc{\mathscr{U}}
\def\buc{\bar U}
\def\bubc{\bar{\mathscr{U}}}
\def\half{\frac{1}{2}}
\def\e{\epsilon}
\def\s{\mathcal{S}}
\def\vx{\vec x}
\def\vw{\vec w}
\def\nx{\nabla_{x}}
\def\t{\mathcal{T}}
\def\a{\mathcal{A}}
\def\px{\p_{\eta}}
\def\ps{\p_{\theta}}
\def\rt{\rightarrow}
\def\l{\lambda}
\def\ll{\mathcal{L}}
\def\r{\mathbb{R}}
\def\f{f}
\def\k{\kappa}
\def\q{Q}
\def\qb{\mathscr{Q}}
\def\v{\mathscr{V}}
\begin{document}
\title{Asymptotic Analysis of Transport Equation in Annulus}

\author[L. Wu]{Lei Wu}
\address[L. Wu]{
   \newline\indent Department of Mathematical Sciences, Carnegie Mellon University
\newline\indent Pittsburgh, PA 15213, USA}
\email{lwu2@andrew.cmu.edu}

\author[X.F. Yang]{Xiongfeng Yang}
\address[X.F. Yang]{
   \newline\indent Department of Mathematics, MOE-LSC and SHL-MAC, Shanghai Jiao Tong University,
\newline\indent Shanghai, 200240, P.R. China}
\email{xf-yang@sjtu.edu.cn}

\author[Y. Guo]{Yan Guo}
\address[Y. Guo]{\newline\indent
Division of Applied Mathematics, Brown University,
\newline\indent Providence, RI 02912, USA }
\email{Yan\_Guo@brown.edu}

\thanks{
L. Wu is supported by NSF grant
0967140.  X.F. Yang is supported by NNSF of China under the Grant 11171212 and
the SJTU's SMC Projection A. Y. Guo is supported in part by NSFC grant 10828103, NSF grant 1209437, Simon Research Fellowship and BICMR.}

\begin{abstract}
We consider the diffusive limit of a steady neutron transport
equation with one-speed velocity in a two-dimensional annulus.
A classical theorem in \cite{Bensoussan.Lions.Papanicolaou1979}
states that the solution can be approximated in $L^{\infty}$ by
the leading order interior solution plus the corresponding Knudsen layers
in the diffusive limit. In this paper, we construct a counterexample of
this result via a different boundary layer expansion with geometric correction.\\
\ \\
\textbf{Keywords:} $\e$-Milne problem, Knudsen layer,
Geometric correction.
\end{abstract}
\maketitle

\pagestyle{myheadings} \thispagestyle{plain} \markboth{L. WU, X. YANG AND
Y. GUO}{ASYMPTOTIC ANALYSIS OF
TRANSPORT EQUATION}

\tableofcontents

\listoffigures

\newpage

\section{Introduction and Notation}

\subsection{Problem Formulation}

We consider a homogeneous isotropic steady neutron transport
equation with one-speed velocity
$\Sigma=\{\vw=(w_1,w_2):\ \vw\in\s^1\}$
in a two-dimensional annulus $\Omega=\{\vx=(x_1,x_2):\
0<R_{-}<\abs{\vx}< R_{+}<\infty\}$ as
\begin{eqnarray}
\left\{
\begin{array}{rcl}
\e \vw\cdot\nabla_x u^{\e}+u^{\e}-\bar u^{\e}&=&0\label{transport}\
\ \ \text{in}\ \ \Omega,\\\rule{0ex}{1.0em}
u^{\e}(\vx_0,\vw)&=&g_{\pm}(\vx_0,\vw)\ \ \text{for}\ \ \vw\cdot\vec
n<0\ \ \text{and}\ \ \abs{\vx_0}=R_{\pm},
\end{array}
\right.
\end{eqnarray}
where
\begin{eqnarray}\label{average 1}
\bar u^{\e}(\vx)=\frac{1}{2\pi}\int_{\s^1}u^{\e}(\vx,\vw)\ud{\vw}.
\end{eqnarray}
and $\vec n$ is the outward normal vector on $\p\Omega$, $0<\e<<1$ is the
Knudsen number. The in-flow boundary condition is given on the two circles $R_{\pm}$. In this paper, we will study the diffusive limit of the solution $u^{\e}$ to (\ref{transport}) as $\e\rt0$.

In the physical space, we consider the two-dimensional annulus $\Omega=\{\vx=(x_1,x_2):\
0<R_{-}<\abs{\vx}< R_{+}<\infty\}$. Its boundary $\p\Omega$ includes the inner boundary and outer boundary, that is,
$\p\Omega=\p\Omega_{-}\cup\p\Omega_{+}$ where
\begin{eqnarray}
\p\Omega_{-}&=&\{\vx:\ \abs{\vx}=R_-\},\\
\p\Omega_{+}&=&\{\vx:\ \abs{\vx}=R_+\}.
\end{eqnarray}

Based on the flow direction, we can divide the boundary in phase space
$\Gamma=\{(\vx,\vw):\ \vx\in\p\Omega\}$ into the in-flow boundary
$\Gamma^-$, the out-flow boundary $\Gamma^+$, and the grazing set
$\Gamma^0$ as
\begin{eqnarray}
\Gamma^{-}&=&\{(\vx,\vw):\ \vx\in\p\Omega,\ \vw\cdot\vec n<0\},\\
\Gamma^{+}&=&\{(\vx,\vw):\ \vx\in\p\Omega,\ \vw\cdot\vec n>0\},\\
\Gamma^{0}&=&\{(\vx,\vw):\ \vx\in\p\Omega,\ \vw\cdot\vec n=0\}.
\end{eqnarray}
So $\Gamma=\Gamma^+\cup\Gamma^-\cup\Gamma^0$.  For the in-flow boundary condition, the boundary value is only given on $\Gamma^{-}$.

A classical result in \cite{Bensoussan.Lions.Papanicolaou1979}
states that the solution $u^{\e}$ of (\ref{transport}) satisfies
\begin{eqnarray}\label{wrong}
\lnm{u^\e-\uc_0-\ubc_{+,0}-\ubc_{-,0}}=O(\e)
\end{eqnarray}
where $\ubc_{\pm,0}$ is the Knudsen layer to the Milne problem
(\ref{classical temp 1}) while $\uc_0$ is the corresponding interior
solution to the Laplace equation (\ref{classical temp 2}). The goal
of this paper is to construct a counterexample to such a result in
an annulus.

\subsection{Background and Idea}

The study of neutron transport equation can date back to 1960s. Since then, this type of problems have been extensively studied in many different settings: steady or unsteady, linear or nonlinear, strong solution or weak solution, etc, (see \cite{Larsen1974}, \cite{Larsen1974=}, \cite{Larsen1975}, \cite{Larsen1977}, \cite{Larsen.D'Arruda1976}, \cite{Larsen.Habetler1973}, \cite{Larsen.Keller1974}, \cite{Larsen.Zweifel1974}, \cite{Larsen.Zweifel1976}). Among all these variations, one of the simplest but most important models - steady neutron transport equation with one-speed velocity in bounded domains, where the boundary layer effect shows up, has long been believed to be satisfactorily solved since Bensoussan, Lions and Papanicolaou published their remarkable paper \cite{Bensoussan.Lions.Papanicolaou1979} in 1979.

The basic idea in \cite{Bensoussan.Lions.Papanicolaou1979} is to consider the boundary layer $f_0(\eta,\phi)$ satisfies that in the domain $(\eta,\phi)\in[0,\infty)\times[-\pi,\pi)$,
\begin{eqnarray}\label{graph 0}
\left\{
\begin{array}{rcl}\displaystyle
\sin\phi\frac{\p
f_{0}}{\p\eta}+f_{0}-\bar f_{0}&=&S_{0}(\eta,\phi),\\
f_{0}(0,\phi)&=&h_{0}(\phi)\ \ \text{for}\ \ \sin\phi>0,\\
\lim_{\eta\rt\infty}f_{0}(\eta,\phi)&=&f_{0,\infty},
\end{array}
\right.
\end{eqnarray}
where $\eta$ denotes the normal variable and $\phi$ the velocity variable. This is the well-known Milne problem and $f_0$ can be shown to be well-posedness and decays exponentially fast to $f_{0,\infty}$ in $L^{\infty}$.

However, in \cite{AA003} the authors pointed out that the construction of boundary layer in \cite{Bensoussan.Lions.Papanicolaou1979} based on Milne problem will break down due to singularity near the grazing set. This brings our attention back to the starting point and we have to reexamine all of the related results. Also, in \cite{AA003}, a new approach was introduced and shown to be effective when the domain is a two-dimensional plate.

The central idea of constructing boundary layer is to consider so-called $\e$-Milne problem with geometric correction. For annulus, the boundary layer $f_+(\eta,\phi)$ near outer circle satisfies
\begin{eqnarray}\label{graph 1}
\left\{
\begin{array}{rcl}\displaystyle
\sin\phi\frac{\p
f_{+}}{\p\eta}-\frac{\e}{R_+-\e\eta}\cos\phi\frac{\p f_+}{\p\phi}+f_{+}-\bar f_{+}&=&S_{+}(\eta,\phi),\\
f_{+}(0,\phi)&=&h_{+}(\phi)\ \ \text{for}\ \ \sin\phi>0,\\
\lim_{\eta\rt\infty}f_{+}(\eta,\phi)&=&f_{+,\infty},
\end{array}
\right.
\end{eqnarray}
Simply speaking, the boundary layer of outer circle is similar to the boundary in a plate. This problem has been extensively studied in \cite[Section 4]{AA003} and we know the solution $f_+$ is well-posedness and decays exponentially fast to $f_{+,\infty}$.

However, for inner circle, we must consider the boundary $f_-(\eta,\phi)$ satisfying
\begin{eqnarray}\label{graph 2}
\left\{
\begin{array}{rcl}\displaystyle
\sin\phi\frac{\p
f_{-}}{\p\eta}+\frac{\e}{R_-+\e\eta}\cos\phi\frac{\p f_-}{\p\phi}+f_{-}-\bar f_{-}&=&S_{-}(\eta,\phi),\\
f_{-}(0,\phi)&=&h_{-}(\phi)\ \ \text{for}\ \ \sin\phi>0,\\
\lim_{\eta\rt\infty}f_{-}(\eta,\phi)&=&f_{-,\infty},
\end{array}
\right.
\end{eqnarray}
The proof in \cite{AA003} relies on the analysis along the characteristics. However, this changed sign of the second term in (\ref{graph 2}) will greatly affect the shape of characteristics, which is shown by Figures \ref{fig 0}, \ref{fig 1} and \ref{fig 2}.

\begin{figure}[H]
\begin{minipage}[t]{0.5\linewidth}
\centering
\includegraphics[width=2.0in]{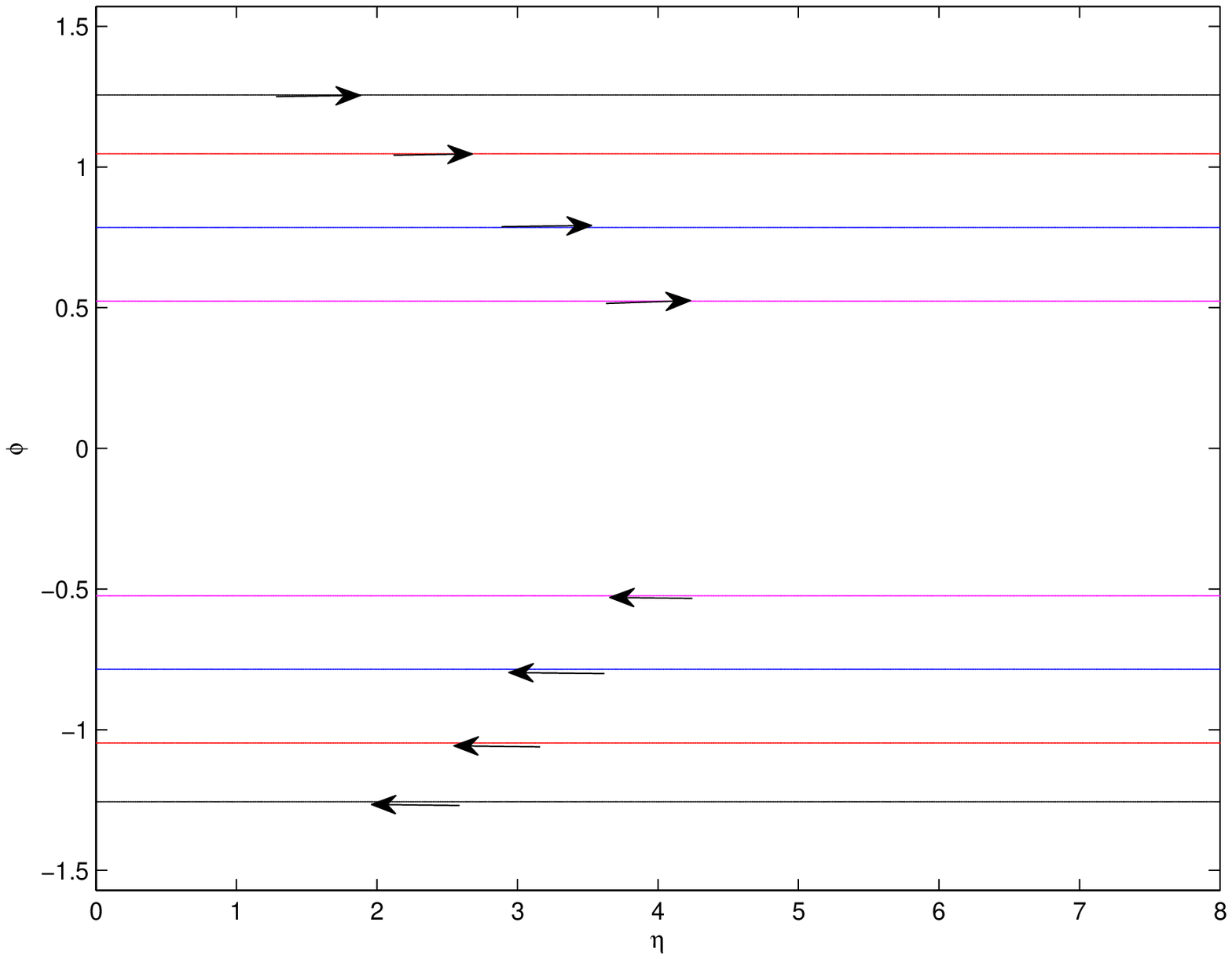}
\caption{Characteristics of Flat Milne Problem for $f_0$} \label{fig 0}
\end{minipage}
\begin{minipage}[t]{0.5\linewidth}
\centering
\includegraphics[width=2.0in]{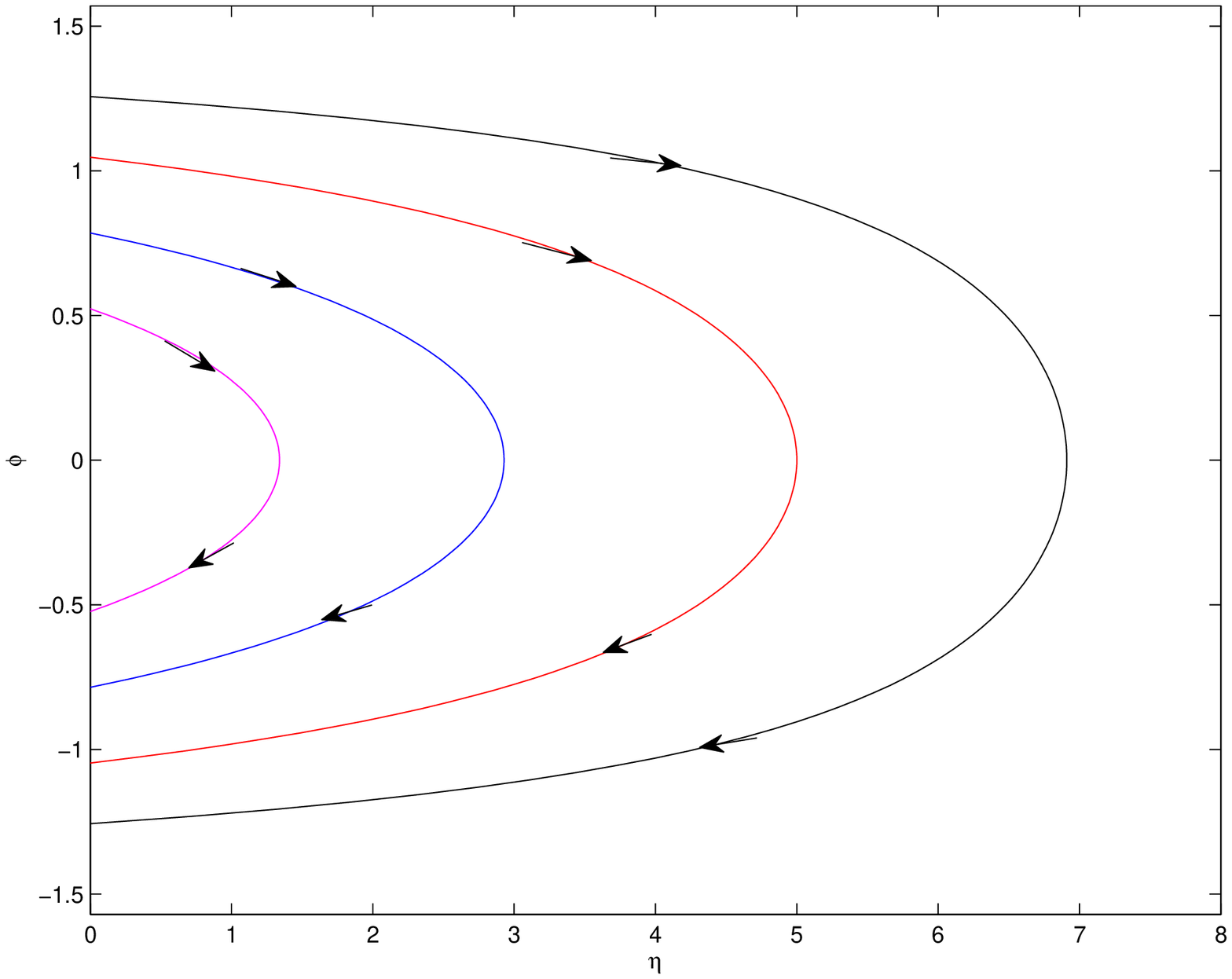}
\caption{Characteristics of $\e$-Milne Problem in Convex Domain for $f_+$} \label{fig 1}
\end{minipage}%
\begin{minipage}[t]{0.5\linewidth}
\centering
\includegraphics[width=2.0in]{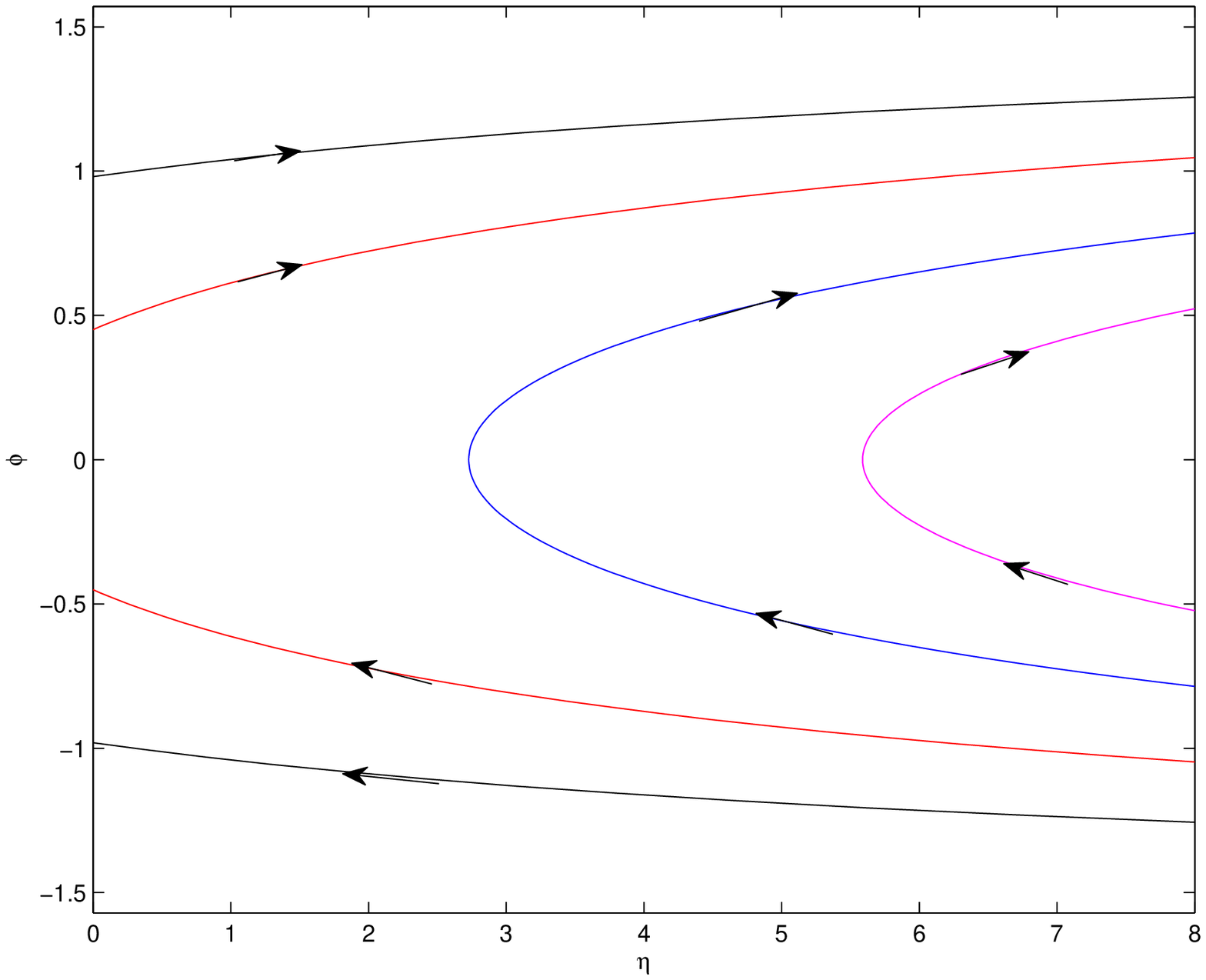}
\caption{Characteristics of $\e$-Milne Problem in Non-Convex Domain for $f_-$} \label{fig 2}
\end{minipage}
\end{figure}

%
%

This adds new difficulties to our estimate and we have to resort to different formulation to bound $f_-$.

\subsection{Main Results}

We will use the transformation in (\ref{substitution 2}) and
(\ref{substitution 4}). That is, we define $\vx = (r\cos\theta,r\sin\theta)$ with
$R_-\leq r\leq R_+$ and $-\pi\leq \theta \leq \pi$ while $\vw=(-\sin \xi,\cos\xi)$ with $-\pi\leq \theta \leq \pi$.
We also define the $\phi=\theta+\xi$.
The boundary value is given by $\tilde{g}_{\pm}(\theta,\phi)=g_{\pm}(\vx_0,\vw)$. For convenience, we denote the boundary condition as $g_{\pm}(\theta,\phi)$.
Then, the diffusive limit is stated as follows:
\begin{theorem}\label{main 1}
Assume $g_{\pm}(\vx_0,\vw)\in C^3(\Gamma^-)$. Then there exists a unique solution
$u^{\e}(\vx,\vw)\in L^{\infty}(\Omega\times\s^1)$ for the steady
neutron transport equation (\ref{transport}). Furthermore, it satisfies
\begin{eqnarray}\label{main theorem 1}
\lnm{u^{\e}-\u_0-\ub_{+,0}-\ub_{-,0}}=O(\e)
\end{eqnarray}
where the interior solution $\u_0$ and boundary layer $\ub_{\pm,0}$
are defined in (\ref{expansion temp 8}) and (\ref{expansion temp
9}). In particular, if $g_+(\phi,\theta))=0$ and
$g_{-}(\theta, \phi)=\cos\phi$, then there exists a positive constant $C>0$ such that
\begin{eqnarray}\label{main theorem 2}
\lnm{u^{\e}-\uc_0-\ubc_{+,0}-\ubc_{-,0}}\geq C>0
\end{eqnarray}
when $\e$ is sufficiently small, where $\uc_0$ and $\ubc_{\pm,0}$
are defined in (\ref{classical temp 2}) and (\ref{classical temp
1}).
\end{theorem}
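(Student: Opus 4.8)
The plan is to prove the two displays (\ref{main theorem 1}) and (\ref{main theorem 2}) in tandem: first establish that the geometric-correction expansion $\u_0+\ub_{+,0}+\ub_{-,0}$ is an $O(\e)$-accurate approximation of $u^{\e}$, and then show that the \emph{classical} expansion $\uc_0+\ubc_{+,0}+\ubc_{-,0}$ lies an $O(1)$ distance from it, so that no single function can be simultaneously $O(\e)$-close to both. The positive statement does the analytic work; the counterexample is then an almost purely algebraic consequence via the triangle inequality.

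For (\ref{main theorem 1}) I would pass to the coordinates $\vx=(r\cos\theta,r\sin\theta)$, $\vw=(-\sin\xi,\cos\xi)$, $\phi=\theta+\xi$, and insert the two-scale ansatz $u^{\e}\sim\u_0+\e\,\u_1+\cdots+\ub_{+,0}+\ub_{-,0}+\cdots$, rescaling the normal variable as $\eta=(R_+-r)/\e$ near $\p\Omega_+$ and $\eta=(r-R_-)/\e$ near $\p\Omega_-$. Matching powers of $\e$ forces the interior profile $\u_0$ to solve a Laplace (diffusion) problem whose Dirichlet data are furnished by the far-field limits $f_{\pm,\infty}$ of the boundary layers, while the layers themselves solve the $\e$-Milne problems (\ref{graph 1}) and (\ref{graph 2}). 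The outer layer $f_+$ is governed by the convex analysis of \cite[Section 4]{AA003} and can be taken over directly. The inner layer $f_-$ is the technical heart: because the geometric term carries the opposite sign in (\ref{graph 2}), the characteristics bend the other way (Figure \ref{fig 2}) and the characteristic method of \cite{AA003} no longer delivers global control, so I would re-derive well-posedness and exponential decay of $f_-$ through a separate formulation adapted to the non-convex geometry. With the profiles constructed, I set $R^{\e}=u^{\e}-(\u_0+\ub_{+,0}+\ub_{-,0})$, verify that it solves a transport problem of the form (\ref{transport}) with an $O(\e)$ interior source and boundary data, and close the estimate by an $L^2$--$L^\infty$ argument: coercivity of $I-\overline{(\cdot)}$ yields $L^2$ control, which is then upgraded along characteristics to $\lnm{R^{\e}}=O(\e)$.

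For the counterexample I specialize to $g_+=0$, $g_-=\cos\phi$ and compare the two expansions through
\[
\lnm{u^{\e}-\uc_0-\ubc_{+,0}-\ubc_{-,0}}\ \geq\ \lnm{\u_0+\ub_{+,0}+\ub_{-,0}-\uc_0-\ubc_{+,0}-\ubc_{-,0}}-\lnm{u^{\e}-\u_0-\ub_{+,0}-\ub_{-,0}}.
\]
The last term is $O(\e)$ by (\ref{main theorem 1}), so it suffices to bound the middle quantity below by a fixed positive constant. The decisive point is that the far-field limit produced by the inner \emph{geometric} Milne problem (\ref{graph 2}) differs at leading order from the limit produced by the flat Milne problem (\ref{graph 0}): for the data $\cos\phi$ the sign-flipped curvature term shifts the value $f_{-,\infty}$ to which $f_-$ relaxes, and this $O(1)$ shift propagates through the matching to the Dirichlet data and hence to $\u_0-\uc_0$. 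I would therefore evaluate the middle quantity at an interior point held at a fixed positive distance from $\p\Omega_\pm$, where both families of boundary layers are already exponentially small, so that it reduces to $|\u_0-\uc_0|$ up to negligible layer remainders, and exhibit this as $\geq C>0$ uniformly for small $\e$.

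The principal obstacles are twofold. Analytically, the delicate step in the positive part is the inner $\e$-Milne problem: the reversed sign in (\ref{graph 2}) destroys the monotone characteristic structure that makes the convex case tractable, so the a priori bounds and the exponential relaxation of $f_-$ must be recovered by a genuinely different energy argument rather than by tracing characteristics. For the counterexample, the crux is to verify quantitatively that the geometric correction shifts the far-field limit $f_{-,\infty}$ by an $O(1)$ amount for the chosen data $\cos\phi$ — equivalently, that the flat and curved Milne problems assign \emph{different} interior Dirichlet data — since this separation of $\uc_0$ from $\u_0$ is exactly what makes the lower bound (\ref{main theorem 2}) nonvacuous and thereby contradicts the classical claim (\ref{wrong}) of \cite{Bensoussan.Lions.Papanicolaou1979}.
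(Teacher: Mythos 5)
Your treatment of (\ref{main theorem 1}) follows the paper's route in outline (build the interior/boundary-layer hierarchy, form a remainder, close with an $L^2$--$L^{\infty}$ argument), but one quantitative point is missing: the $L^{\infty}$ remainder estimate actually available (Theorem \ref{LI estimate}) loses a factor $\e^{-3}$, so a remainder equation with an $O(\e)$ source yields only an $O(\e^{-2})$ bound. The paper must expand to order $N=3$, so that $\abs{\ll R_3}\leq C\e^{4}$ and hence $\im{R_3}{\Omega\times\s^1}\leq C\e^{-3}\cdot\e^{4}=C\e$, and only afterwards discards the higher-order profiles, which are themselves $O(\e)$. Defining $R^{\e}=u^{\e}-(\u_0+\ub_{+,0}+\ub_{-,0})$ and closing directly, as you propose, does not work with the estimates at hand.

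The more serious gap is in the counterexample. Your lower bound rests entirely on the claim that the geometric correction shifts the far-field limit of the inner layer by an $O(1)$ amount, i.e.\ that $f_{-,0}^{\e}(\infty)$ and $f_{-,0}(\infty)$ stay a fixed distance apart as $\e\rt0$, so that $\u_0$ and $\uc_0$ separate in the interior. You offer no argument for this, and it is almost certainly false: the forcing $F_{-}(\e;\eta_{-})=-\e\psi(\e\eta_{-})/(R_-+\e\eta_{-})$ is $O(\e)$ pointwise and its potential varies only by $O(\e)$ over the $O(1)$ range of $\eta$ in which the layer relaxes, so the two far-field constants coincide to leading order and the two interior solutions cannot be separated. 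Consistently with this, the paper never compares the two expansions at an interior point; it detects the discrepancy \emph{inside} the boundary layer, at the near-grazing point $(\eta,\phi)=(n\e,\e)$ (distance $n\e^{2}$ from $\p\Omega_-$, velocity angle $\e$ from the grazing set), where the bent characteristics of the $\e$-Milne problem replace the flat exponential weight $\ue^{-n}$ by $\ue^{\sqrt{1-2n}-1}$. Since these two weights differ and the coefficients $3-\bar u(0)$, $3-\bar U(0)$ are $O(1)$ for the data $\cos\phi+2$, the two profiles differ by $O(1)$ at that point, which is what contradicts (\ref{wrong}). At a point held a fixed distance from the boundary, as you suggest, both layers are exponentially small and the two approximations agree up to $o(1)$, so your lower bound would be vacuous there. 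The counterexample is intrinsically a grazing-set phenomenon, and the proposal misses that mechanism.
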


\subsection{Notation and Structure}

Throughout this paper, $C>0$ denotes a constant only depends on
the parameter $\Omega$, but does not depend on the given data. It is
referred as universal and can is different from one line to another.
When we write $C(z)$, it means a certain positive constant depending
on the quantity $z$.

Our paper is organized as follows: In Section 2, we first present
the asymptotic analysis of both the interior solution and boundary
layer; In Section 3, we give a complete
analysis of the $\e$-Milne problem with geometric correction; In Section 4, we establish the $L^{\infty}$ remainder estimate;
finally, in Section 5, we prove the diffusive limit of the solution, i.e. Theorem
\ref{main 1}.

\section{Asymptotic Analysis}

\subsection{Interior Expansion}

We define the interior expansion as follows:
\begin{eqnarray}\label{interior expansion}
\uc(\vx,\vw)\sim\sum_{k=0}^{\infty}\e^k\uc_k(\vx,\vw).
\end{eqnarray}
Plugging (\ref{interior expansion}) into the equation
(\ref{transport}) and comparing the order of $\e$, the functions $\uc_k~(k=0,1,2\cdots,)$ should satisfy
\begin{eqnarray}
\uc_0-\buc_0&=&0,\label{expansion temp 1}\\
\uc_1-\buc_1&=&-\vw\cdot\nx\uc_0,\label{expansion temp 2}\\
\uc_2-\buc_2&=&-\vw\cdot\nx\uc_1,\label{expansion temp 3}\\
\ldots\nonumber\\
\uc_k-\buc_k&=&-\vw\cdot\nx\uc_{k-1}.
\end{eqnarray}
\ \\
The following analysis reveals the equation satisfied by
$\uc_k$:\\
The equalities (\ref{expansion temp 1}) and (\ref{expansion temp 2}) could be rewritten as
\begin{eqnarray}
\uc_1=\buc_1-\vw\cdot\nx\buc_0.\label{expansion temp 4}
\end{eqnarray}
Thus, from (\ref{expansion temp 4}) into (\ref{expansion temp 3}), we
get
\begin{eqnarray}\label{expansion temp 13}
\uc_2-\buc_2=-\vw\cdot\nx(\buc_1-\vw\cdot\nx\buc_0)=-\vw\cdot\nx\buc_1+|\vw|^2\Delta_x\buc_0+2w_1w_2\p_{x_1x_2}\buc_0.
\end{eqnarray}
Integrating (\ref{expansion temp 13}) over $\vw\in\s^1$, we achieve
the final form
\begin{eqnarray}
\Delta_x\buc_0=0,
\end{eqnarray}
which further implies $\uc_0(\vx,\vw)$ satisfies the equation
\begin{eqnarray}\label{interior 1}
\left\{
\begin{array}{rcl}
\uc_0&=&\buc_0\\
\Delta_x\uc_0&=&0
\end{array}
\right.
\end{eqnarray}
Similarly, we can derive $\uc_k(\vx,\vw)$ for $k\geq1$ satisfies
\begin{eqnarray}\label{interior 2}
\left\{
\begin{array}{rcl}
\uc_k&=&\buc_k-\vw\cdot\nx\uc_{k-1}\\
\Delta_x\buc_k&=&0
\end{array}
\right.
\end{eqnarray}

\subsection{Milne Expansion}

In general, the value of (\ref{interior expansion}) on the boundary is different from the in-flow boundary condition in (\ref{transport}). In order to match the boundary condition, we need to give the boundary layer expansion. Here we firstly recall the idea of this expansion in
\cite[pp.136]{Bensoussan.Lions.Papanicolaou1979} in the following several substitutions:\\
\ \\
{\it Substitution 1}: We consider the substitution into quasi-polar coordinates
$(x_1,x_2,\vw)\rt (\mu_{\pm},\theta,\vw)$ with
$(\mu_{\pm},\theta, \vw)\in
[0,R_+-R_-]\times[-\pi,\pi)\times\s^1$ defined as
\begin{eqnarray}\label{substitution 1}
\left\{
\begin{array}{rcl}
x_1&=&(R_{\pm}\mp\mu_{\pm})\cos\theta,\\
x_2&=&(R_{\pm}\mp\mu_{\pm})\sin\theta,\\
w_1&=&w_1,\\
w_2&=&w_2.
\end{array}
\right.
\end{eqnarray}
Here $\mu_{\pm}$ denotes the distance to the boundary
$\p\Omega_{\pm}$ and $\theta$ is the space angular variable. In
these new variables,  we also denote the solution as
$u^{e}(\mu_{\pm},\theta,\vw)$. Then, the equation (\ref{transport}) can be rewritten as
\begin{eqnarray}
\left\{ \begin{array}{l} \displaystyle
\mp\e\big(w_1\cos\theta+w_2\sin\theta\big)\frac{\p
u^{\e}}{\p\mu_{\pm}}-\frac{\e\big(w_1\sin\theta-w_2\cos\theta\big)}{R_{\pm}\mp\mu_{\pm}}\frac{\p
u^{\e}}{\p\theta}+u^{\e}-\frac{1}{2\pi}\int_{\s^1}u^{\e}\ud{\vw}=0,\\\rule{0ex}{1.0em}
u^{\e}(0,\theta,w_1,w_2)=g_{\pm}(\theta,w_1,w_2)\ \ \text{for}\ \
\pm (w_1\cos\theta+w_2\sin\theta)<0.
\end{array}
\right.
\end{eqnarray}

\noindent {\it Substitution 2}:
We further define the stretched variable $\eta_{\pm}$ by making the
scaling transform for $u^{\e}(\mu_{\pm},\theta,w_1,w_2)\rt
u^{\e}(\eta_{\pm},\theta,w_1,w_2)$ with
$(\eta_{\pm},\theta,w_1,w_2)\in
[0,(R_{+}-R_{-})/\e]\times[-\pi,\pi)\times\s^1$ as
\begin{eqnarray}\label{substitution 2}
\left\{
\begin{array}{rcl}
\eta_{\pm}&=&\mu_{\pm}/\e,\\
\theta&=&\theta,\\
w_1&=&w_1,\\
w_2&=&w_2,
\end{array}
\right.
\end{eqnarray}
which implies
\begin{eqnarray}
\frac{\p u^{\e}}{\p\mu_{\pm}}=\frac{1}{\e}\frac{\p
u^{\e}}{\p\eta_{\pm}}.
\end{eqnarray}
Then equation (\ref{transport}) is transformed into
\begin{eqnarray}
\left\{ \begin{array}{l}\displaystyle
\mp\big(w_1\cos\theta+w_2\sin\theta\big)\frac{\p
u^{\e}}{\p\eta_{\pm}}-\frac{\e\big(w_1\sin\theta-w_2\cos\theta\big)}{R_{\pm}\mp\e\eta_{\pm}}\frac{\p
u^{\e}}{\p\theta}+u^{\e}-\frac{1}{2\pi}\int_{\s^1}u^{\e}\ud{\vw}=0,\\\rule{0ex}{1.0em}
u^{\e}(0,\theta,w_1,w_2)=g_{\pm}(\theta,w_1,w_2)\ \ \text{for}\ \
\pm (w_1\cos\theta+w_2\sin\theta)<0.
\end{array}
\right.
\end{eqnarray}
\ \\
\noindent {\it Substitution 3}:
Define the velocity substitution for
$u^{\e}(\eta_{\pm},\theta,w_1,w_2)\rt u^{\e}(\eta_{\pm},\theta,\xi)$
with $(\eta_{\pm},\theta,\xi)\in
[0,(R_{+}-R_{-})/\e]\times[-\pi,\pi)\times[-\pi,\pi)$ as
\begin{eqnarray}\label{substitution 3}
\left\{
\begin{array}{rcl}
\eta_{\pm}&=&\eta_{\pm},\\
\theta&=&\theta,\\
w_1&=&-\sin\xi,\\
w_2&=&-\cos\xi.
\end{array}
\right.
\end{eqnarray}
Here $\xi$ denotes the velocity angular variable. We have the
succinct form for (\ref{transport}) as
\begin{eqnarray}\label{classical temp}
\left\{ \begin{array}{l}\displaystyle \pm\sin(\theta+\xi)\frac{\p
u^{\e}}{\p\eta_{\pm}}-\frac{\e\cos(\theta+\xi)}{R_{\pm}\mp\e\eta_{\pm}}\frac{\p
u^{\e}}{\p\theta}+u^{\e}-\frac{1}{2\pi}\int_{-\pi}^{\pi}u^{\e}\ud{\xi}=0,\\\rule{0ex}{1.0em}
u^{\e}(0,\theta,\xi)=g_{\pm}(\theta,\xi)\ \ \text{for}\ \
\pm\sin(\theta+\xi)>0.
\end{array}
\right.
\end{eqnarray}
\ \\
We now define the Milne expansion of boundary layer as follows:
\begin{eqnarray}\label{classical expansion}
\ubc_{\pm}(\eta_{\pm},\theta,\phi)\sim\sum_{k=0}^{\infty}\e^k\ubc_{\pm,k}(\eta_{\pm},\theta,\phi),
\end{eqnarray}
where $\ubc_{\pm,k}$ can be determined by comparing the order of
$\e$ via plugging (\ref{classical expansion}) into the equation
(\ref{classical temp}). Thus, in a neighborhood of the boundary, we
have
\begin{eqnarray}
\pm\sin(\theta+\xi)\frac{\p
\ubc_{\pm,0}}{\p\eta_{\pm}}+\ubc_{\pm,0}-\bubc_{\pm,0}&=&0,\label{cexpansion temp 5}\\
\pm\sin(\theta+\xi)\frac{\p
\ubc_{\pm,1}}{\p\eta_{\pm}}+\ubc_{\pm,1}-\bubc_{\pm,1}&=&\frac{\cos(\theta+\xi)}{R_{\pm}\mp\e\eta_{\pm}}\frac{\p
\ubc_{\pm,0}}{\p\theta},\label{cexpansion temp 6}\\
\ldots\nonumber\\
\pm\sin(\theta+\xi)\frac{\p
\ubc_{\pm,k}}{\p\eta_{\pm}}+\ubc_{\pm,k}-\bubc_{\pm,k}&=&\frac{\cos(\theta+\xi)}{R_{\pm}\mp\e\eta_{\pm}}\frac{\p
\ubc_{\pm,k-1}}{\p\theta},
\end{eqnarray}
where
\begin{eqnarray}
\bar
\ubc_{\pm,k}(\eta_{\pm},\theta)=\frac{1}{2\pi}\int_{-\pi}^{\pi}\ubc_{\pm,k}(\eta_{\pm},\theta,\xi)\ud{\xi}.
\end{eqnarray}
We hope that the solution is formulated from the interior solution and the boundary layer
solution. So it should satisfy the boundary condition of (\ref{transport}). The boundary condition expansion derives to
\begin{eqnarray}
\uc_0+\ubc_{\pm,0}&=&g_{\pm},\\
\uc_1+\ubc_{\pm,1}&=&0,\\
\ldots\nonumber\\
\uc_k+\ubc_{\pm,k}&=&0.
\end{eqnarray}
The construction of $\uc_{k}$ and $\ubc_{\pm,k}$ in
\cite{Bensoussan.Lions.Papanicolaou1979} can be
summarized as follows:\\
\ \\
Step 1: Construction of $\ubc_{\pm,0}$ and $\uc_0$.\\
To deal with the sigularity according to $r$, we define the cut-off function $\psi$ and $\psi_0$ as
\begin{eqnarray}\label{cut-off 1}
\psi(\mu)=\left\{
\begin{array}{ll}
1&0\leq\mu\leq1/2(R_+-R_-),\\
0&3/4(R_+-R_-)\leq\mu\leq\infty.
\end{array}
\right.
\end{eqnarray}
\begin{eqnarray}\label{cut-off 2}
\psi_0(\mu)=\left\{
\begin{array}{ll}
1&0\leq\mu\leq1/4(R_+-R_-),\\
0&3/8(R_+-R_-)\leq\mu\leq\infty.
\end{array}
\right.
\end{eqnarray}
Then the zeroth order boundary layer solution is defined as
\begin{eqnarray}\label{classical temp 1}
\left\{
\begin{array}{rclll}
\ubc_{\pm,0}(\eta_{\pm},\theta,\xi)&=&\psi_0(\e\eta_{\pm})\bigg(\f_{\pm,0}(\eta_{\pm},\theta,\xi)-f_{\pm,0}(\infty,\theta)\bigg),&\\
\pm\sin(\theta+\xi)\dfrac{\p \f_{\pm,0}}{\p\eta_{\pm}}+\f_{\pm,0}-\bar \f_{\pm,0}&=&0, \,\, 0<\eta_{\pm}<\infty,\\
\f_{\pm,0}(0,\theta,\xi)&=&g_{\pm}(\theta,\xi) \,\, \text{for}\ \
\pm\sin(\theta+\xi)>0,\\\rule{0ex}{1em}
\lim_{\eta_{\pm}\rt\infty}\f_{\pm,0}(\eta_{\pm},\theta,\xi)&=&f_{\pm,0}(\infty,\theta).
\end{array}
\right.
\end{eqnarray}
Assuming $g_{\pm}\in L^{\infty}(\Gamma^-)$, by Theorem \ref{Milne
theorem 1}, we can show there exist unique solutions
$\f_{\pm,0}(\eta_{\pm},\theta,\xi)\in
L^{\infty}([0,\infty)\times[-\pi,\pi)\times[-\pi,\pi))$. Hence,
$\ubc_{\pm,0}$ is well-defined. Then we define the zeroth order
interior solution as
\begin{eqnarray}\label{classical temp 2}
\left\{
\begin{array}{rcl}
\uc_0&=&\buc_0,\\\rule{0ex}{1em} \Delta_x\buc_0&=&0\ \ \text{in}\ \
\Omega,\\\rule{0ex}{1em} \buc_0&=&f_{\pm,0}(\infty,\theta)\ \
\text{on}\ \ \p\Omega_{\pm}.
\end{array}
\right.
\end{eqnarray}
\ \\
Step 2: Construction of $\ubc_{\pm,1}$ and $\uc_1$. \\
Let $\vx_{\pm}=R_{\pm}(\cos\theta,\sin\theta)$ and $\vw=(-\sin\xi,\cos\xi)$. Then, we define the first order boundary layer solution as
\begin{eqnarray}\label{classical temp 3}
\left\{
\begin{array}{rcl}
\ubc_{\pm,1}(\eta_{\pm},\theta,\xi)&=&\psi_0(\e\eta_{\pm})\bigg(\f_{\pm,1}(\eta_{\pm},\theta,\xi)-f_{\pm,1}(\infty,\theta)\bigg),\\
\pm\sin(\theta+\xi)\dfrac{\p
\f_{\pm,1}}{\p\eta_{\pm}}+\f_{\pm,1}-\bar
\f_{\pm,1}&=&\cos(\theta+\xi)\dfrac{\psi(\e\eta_{\pm})}{R_{\pm}\mp\e\eta_{\pm}}\dfrac{\p
\ubc_{\pm,0}}{\p\theta},\\\rule{0ex}{1em}
\f_{\pm,1}(0,\theta,\xi)&=&\vw\cdot\nx\uc_0(\vx_{\pm},\vw)\ \
\text{for}\ \ \pm\sin(\theta+\xi)>0,\\\rule{0ex}{1em}
\lim_{\eta_{\pm}\rt\infty}\f_{\pm,1}(\eta_{\pm},\theta,\xi)&=&f_{\pm,1}(\infty,\theta).
\end{array}
\right.
\end{eqnarray}
At the same time, we define the first order
interior solution as
\begin{eqnarray}\label{classical temp 5}
\left\{
\begin{array}{rcl}
\uc_1&=&\buc_1-\vw\cdot\nx\uc_0,\\\rule{0ex}{1em}
\Delta_x\buc_1&=&0\ \ \text{in}\ \ \Omega,\\\rule{0ex}{1em}
\buc_1&=&f_{\pm,1}(\infty,\theta)\ \ \text{on}\ \ \p\Omega_{\pm}.
\end{array}
\right.
\end{eqnarray}
\ \\
Step 3: Generalization to arbitrary $k$.\\
Similar to above procedure, we can define the $k^{th}$ order
boundary layer solution as
\begin{eqnarray}
\left\{
\begin{array}{rcl}
\ubc_{\pm,k}(\eta_{\pm},\theta,\xi)&=&\psi_0(\e\eta_{\pm})\bigg(\f_{\pm,k}(\eta_{\pm},\theta,\xi)-f_{\pm,k}(\infty,\theta)\bigg),\\
\pm\sin(\theta+\xi)\dfrac{\p
\f_{\pm,k}}{\p\eta_{\pm}}+\f_{\pm,k}-\bar
\f_{\pm,k}&=&\cos(\theta+\xi)\dfrac{\psi(\e\eta_{\pm})}{R_{\pm}\mp\e\eta_{\pm}}\dfrac{\p
\ubc_{\pm,k-1}}{\p\theta},\\\rule{0ex}{1em}
\f_{\pm,k}(0,\theta,\xi)&=&\vw\cdot\nx\uc_{k-1}(\vx_{\pm},\vw)\ \
\text{for}\ \ \pm\sin(\theta+\xi)>0,\\\rule{0ex}{1em}
\lim_{\eta_{\pm}\rt\infty}\f_{\pm,k}(\eta_{\pm},\theta,\xi)&=&f_{\pm,k}(\infty,\theta).
\end{array}
\right.
\end{eqnarray}
Define the $k^{th}$ order interior solution as
\begin{eqnarray}
\left\{
\begin{array}{rcl}
\uc_k&=&\buc_k-\vw\cdot\nx\uc_{k-1},\\\rule{0ex}{1em}
\Delta_x\buc_k&=&0\ \ \text{in}\ \ \Omega,\\\rule{0ex}{1em}
\buc_k&=&f_{\pm,k}(\infty,\theta)\ \ \text{on}\ \ \p\Omega_{\pm}.
\end{array}
\right.
\end{eqnarray}
Combing the above discussion, we are able to prove the following result:
\begin{theorem}\label{main fake 1}
Assume $g_{\pm}\in L^{\infty}(\Gamma^-)$ are sufficiently smooth. Then there exits a unique
solution $u^{\e}(\vx,\vw)\in L^{\infty}(\Omega\times\s^1)$  for the
steady neutron transport equation (\ref{transport}), which satisfies
\begin{eqnarray}\label{main fake theorem 1}
\lnm{u^{\e}-\uc_0-\ubc_{+,0}-\ubc_{-,0}}=O(\e).
\end{eqnarray}
\end{theorem}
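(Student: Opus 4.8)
The plan is to realize the matched-asymptotic scheme just set up and then close it with a remainder estimate. For fixed $\e>0$ the existence and uniqueness of $u^\e\in L^\infty(\Omega\times\s^1)$ is standard, since $I-\bar{(\cdot)}$ is a nonnegative perturbation of the streaming operator and the in-flow problem (\ref{transport}) can be solved by the accretivity/contraction argument underlying the $L^2$--$L^\infty$ machinery of Section 4; the substance of (\ref{main fake theorem 1}) is the quantitative comparison with the leading expansion. I would first confirm that every building block is well-defined: the interior terms $\uc_k$ solve the Laplace problems (\ref{classical temp 2}) and (\ref{classical temp 5}) with data $f_{\pm,k}(\infty,\theta)$, so elliptic regularity gives $\uc_k\in C^\infty(\bar\Omega)$ for smooth data, while each layer profile $\f_{\pm,k}$ solves a Milne problem whose unique bounded solution decays exponentially to $f_{\pm,k}(\infty,\theta)$ by Theorem \ref{Milne theorem 1}, with $\psi_0$ confining $\ubc_{\pm,k}$ to a neighborhood of $\p\Omega_\pm$. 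Crucially, the exponential decay lets $f_{\pm,k}(\infty,\theta)$ serve as the matching datum for the next interior term, so the hierarchy closes consistently.

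Next, set $u^\e_N=\sum_{k=0}^N\e^k(\uc_k+\ubc_{+,k}+\ubc_{-,k})$ and substitute into (\ref{transport}). By construction the interior and layer equations annihilate the residual order by order, leaving an $O(\e^{N+1})$ bulk term, an $O(\e^{N+1})$ mismatch of the in-flow data on $\Gamma^-$, and commutators generated by $\psi,\psi_0$. Since the cut-offs vary only where $\mu_\pm\gs(R_+-R_-)/2$, a region in which the layers are already exponentially small, those commutator terms are $O(\e^\infty)$ and harmless. The remainder $R^\e=u^\e-u^\e_N$ then satisfies the same transport equation with a source of size $O(\e^{N+1})$ and small in-flow data.

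The quantitative core is this remainder bound, carried out in the $L^2$--$L^\infty$ framework. Testing the $R^\e$-equation against $R^\e$ and using $\int_{\s^1}(R^\e-\bar R^\e)R^\e\,\ud{\vw}=\int_{\s^1}\abs{R^\e-\bar R^\e}^2\,\ud{\vw}\ge 0$ yields coercivity on the microscopic part and an $L^2$ energy estimate; recovering the hydrodynamic part $\bar R^\e$ and upgrading to $L^\infty$ via the Duhamel representation along characteristics then gives $\lnm{R^\e}=O(\e)$ once $N$ is taken large enough to absorb the inverse powers of $\e$ lost in the estimate. Since every $k\ge1$ term of $u^\e_N$ is itself $O(\e)$, this yields (\ref{main fake theorem 1}).

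The hard part is precisely the regularity this last step silently demands. The energy and characteristic estimates require bounds on the $\theta$-derivatives of the layers, which enter both the source of (\ref{cexpansion temp 6}) and the residual, yet the Milne solution degenerates in $\theta$ near the grazing set $\sin(\theta+\xi)=0$, where the characteristic speed vanishes and $\p_\theta\f_{\pm,k}$ need not stay bounded. This grazing singularity is invisible for the flat half-space profile $f_0$ of (\ref{graph 0}) but becomes genuine once the geometric term $\frac{\e\cos(\theta+\xi)}{R_\pm\mp\e\eta_\pm}\p_\theta$ is retained; I therefore expect it to be the true obstruction---indeed the gap in the classical derivation that the remainder of the paper converts into a counterexample.
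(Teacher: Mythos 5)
Your closing paragraph is the correct verdict, and you should trust it over the optimism of the first three: the statement you were asked to prove is the ``fake'' theorem that this paper is written to refute, and no completion of the classical matched-asymptotics scheme can succeed. The existence/uniqueness part and the $L^2$--$L^{\infty}$ remainder machinery you invoke are indeed available (Theorems \ref{LT estimate} and \ref{LI estimate}), but the approximation claim (\ref{main fake theorem 1}) itself is false: Theorem \ref{main 1}, estimate (\ref{main theorem 2}), exhibits data $g_{+}=0$, $g_{-}=\cos\phi$ for which $\lnm{u^{\e}-\uc_0-\ubc_{+,0}-\ubc_{-,0}}\geq C>0$ uniformly as $\e\rt0$. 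The paper states Theorem \ref{main fake 1} only as a summary of the construction of \cite{Bensoussan.Lions.Papanicolaou1979} and immediately explains why that construction breaks down; it offers no proof, because none exists.

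The breakdown occurs exactly where you located it. To continue the hierarchy past order zero one must solve (\ref{classical temp 3}) for $\f_{\pm,1}$, whose source is $\cos(\theta+\xi)\,\psi(\e\eta_{\pm})(R_{\pm}\mp\e\eta_{\pm})^{-1}\,\p_{\theta}\ubc_{\pm,0}$; since the support of $\psi(\e\eta_{\pm})$ grows like $1/\e$, well-posedness (Remark \ref{Milne remark}) forces $\p_{\theta}\big(\f_{\pm,0}-f_{\pm,0}(\infty,\theta)\big)\in L^{\infty}$ on the whole half-line. Setting $Z_{\pm}=\p_{\theta}(\f_{\pm,0}-f_{\pm,0}(\infty,\theta))$, one finds that $Z_{\pm}$ solves a Milne problem whose source is $-\cos(\theta+\xi)\,\p_{\eta}\f_{\pm,0}$, so the required bound reduces to $\p_{\eta}\f_{\pm,0}\in L^{\infty}$, and this fails near the grazing set for suitable data (Appendix of \cite{AA003}). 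One refinement to your phrasing: the singularity lives in the normal derivative $\p_{\eta}\f_{\pm,0}$ of the Milne profile itself (present already in the flat problem), not in the $\theta$-dependence per se; what the curvature of $\p\Omega_{\pm}$ does is make the transport coefficients $\theta$-dependent, so that differentiating the layer equation in $\theta$ injects that singular quantity as a source --- for a genuinely flat boundary the tangential derivative commutes with the operator and no such source appears. This is precisely why the paper replaces (\ref{classical temp 1}) by the $\e$-Milne problem with geometric correction (\ref{expansion temp 9}), whose layer $\ub_{-,0}$ is then shown in Step 5 of Section 5 to differ from $\ubc_{-,0}$ by an $O(1)$ amount at distance $O(\e)$ from $\p\Omega_{-}$, which is the counterexample.
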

\ \\

Our work begins with a crucial observation that based on Remark
\ref{Milne remark}, the existence of solution $\f_{\pm,1}$ requires
the source term
\begin{eqnarray}
\cos(\theta+\xi)\frac{\psi(\e\eta_{\pm})}{R_{\pm}\mp\e\eta_{\pm}}\frac{\p
\ubc_{\pm,0}}{\p\theta}\in
L^{\infty}([0,\infty)\times[-\pi,\pi)\times[-\pi,\pi)).
\end{eqnarray}
Since the support of $\psi(\e\eta_{\pm})$ depends on $\e$, by
(\ref{classical temp 1}), this in turn requires
\begin{eqnarray}
\frac{\p
}{\p\theta}\bigg(\f_{\pm,0}(\eta_{\pm},\theta,\xi)-f_{\pm,0}(\infty,\theta)\bigg)\in
L^{\infty}([0,\infty)\times[-\pi,\pi)\times[-\pi,\pi))
\end{eqnarray}
We can check that $Z_{\pm}=\ps (\f_{\pm,0}-f_{\pm,0}(\infty,\theta))$
satisfy
\begin{eqnarray}
\left\{
\begin{array}{rcl}
\pm\sin(\theta+\xi)\dfrac{\p Z_{\pm}}{\p\eta}+Z_{\pm}-\bar
Z_{\pm}&=&-\cos(\theta+\xi)\dfrac{\p
\f_{\pm,0}}{\p\eta_{\pm}},\\\rule{0ex}{2em}
Z_{\pm}(0,\theta,\xi)&=&\dfrac{\p
g_{\pm}(\theta,\xi)}{\p\theta}-\dfrac{\p
f_{\pm,0}(\infty,\theta)}{\p\theta}\ \ \text{for}\ \
\pm\sin(\theta+\xi)>0,\\\rule{0ex}{1.5em}
\lim_{\eta_{\pm}\rt\infty}Z_{\pm}(\eta_{\pm},\theta,\xi)&=&
Z_{\pm}(\infty,\theta).
\end{array}
\right.
\end{eqnarray}
On the one hand, we require the source term satisfy
\begin{eqnarray}
-\cos(\theta+\xi)\frac{\p \f_{\pm,0}}{\p\eta_{\pm}}\in
L^{\infty}([0,\infty)\times[-\pi,\pi)\times[-\pi,\pi))
\end{eqnarray}
to get a solution $Z_{\pm}\in
L^{\infty}([0,\infty)\times[-\pi,\pi)\times[-\pi,\pi))$ since we assume that $\ps g_{\pm}\in L^{\infty}(\Gamma^-)$.

On the other hand, as be shown by the Appendix of \cite{AA003}, it holds that $\px\f_{\pm,0}\notin
L^{\infty}([0,\infty)\times[-\pi,\pi)\times[-\pi,\pi))$ for some specific boundary condition $g_{\pm}$.
Due to the intrinsic singularity for (\ref{classical temp 1}), the construction
in \cite{Bensoussan.Lions.Papanicolaou1979} breaks down.\\

In fact, in general geometry domain with curved boundary, we need to
control the normal derivative of the boundary layer solution for the
Milne expansion. It is the main reason that we consider the following
$\e$-Milne expansion with geometric correction.

\subsection{$\e$-Milne Expansion with Geometric Correction}

Our main goal is to overcome the difficulty in estimating
\begin{eqnarray}\label{problematic}
\cos(\theta+\xi)\frac{\psi(\e\eta_{\pm})}{R_{\pm}\mp \e\eta_{\pm}}\frac{\p
\ubc_{\pm,k}}{\p\theta}.
\end{eqnarray}
We introduce one more substitution to decompose the term (\ref{problematic}). Now, we give the solution expansion in the following steps.\\
\ \\
{\it Substitution 1}: Define the interior expansion as follows:
\begin{eqnarray}
\u(\vx,\vw)\sim\sum_{k=0}^{\infty}\e^k\u_k(\vx,\vw)
\end{eqnarray}
where $\u_k$ satisfies the same equations as $\uc_k$ in
(\ref{interior 1}) and (\ref{interior 2}). Here, to highlight its
dependence on $\e$ via the $\e$-Milne problem and boundary data, we
add the superscript $\e$.\\
\ \\
{\it Substitution 2}: We make the rotation substitution for
$u^{\e}(\eta_{\pm},\theta,\xi)\rt u^{\e}(\eta_{\pm},\theta,\phi)$
with $(\eta_{\pm},\theta,\phi)\in
[0,(R_+-R_-)/\e]\times[-\pi,\pi)\times[-\pi,\pi)$ as
\begin{eqnarray}\label{substitution 4}
\left\{
\begin{array}{rcl}
\eta_{\pm}&=&\eta_{\pm},\\
\theta&=&\theta,\\
\phi&=&\theta+\xi,
\end{array}
\right.
\end{eqnarray}
and transform the equation (\ref{transport}) into
\begin{eqnarray}\label{transport temp}
\left\{ \begin{array}{l}\displaystyle \pm\sin\phi\frac{\p
u^{\e}}{\p\eta_{\pm}}-\frac{\e}{R_{\pm}\mp\e\eta_{\pm}}\cos\phi\bigg(\frac{\p
u^{\e}}{\p\phi}+\frac{\p
u^{\e}}{\p\theta}\bigg)+u^{\e}-\frac{1}{2\pi}\int_{-\pi}^{\pi}u^{\e}\ud{\phi}=0,\\\rule{0ex}{1.0em}
u^{\e}(0,\theta,\phi)=g_{\pm}(\theta,\phi)\ \ \text{for}\ \
\pm\sin\phi>0.
\end{array}
\right.
\end{eqnarray}
We define the $\e$-Milne expansion with geometric correction of
boundary layer as follows:
\begin{eqnarray}\label{boundary layer expansion}
\ub_{\pm}(\eta_{\pm},\theta,\phi)\sim\sum_{k=0}^{\infty}\e^k\ub_{\pm,k}(\eta_{\pm},\theta,\phi),
\end{eqnarray}
where $\ub_k$ can be determined by comparing the order of $\e$ via
plugging (\ref{boundary layer expansion}) into the equation
(\ref{transport temp}). Thus, in a neighborhood of the boundary, we
have
\begin{eqnarray}
\pm\sin\phi\frac{\p
\ub_{\pm,0}}{\p\eta_{\pm}}-\frac{\e}{R_{\pm}\mp\e\eta_{\pm}}\cos\phi\frac{\p
\ub_{\pm,0}}{\p\phi}+\ub_{\pm,0}-\bub_{\pm,0}&=&0,\label{expansion temp 5}\\
\pm\sin\phi\frac{\p
\ub_{\pm,1}}{\p\eta_{\pm}}-\frac{\e}{R_{\pm}\mp\e\eta_{\pm}}\cos\phi\frac{\p
\ub_{\pm,1}}{\p\phi}+\ub_{\pm,1}-\bub_{\pm,1}&=&\frac{\cos\phi}{R_{\pm}\mp\e\eta_{\pm}}\frac{\p
\ub_{\pm,0}}{\p\theta},\label{expansion temp 6}\\
\ldots\nonumber\\
\pm\sin\phi\frac{\p
\ub_{\pm,k}}{\p\eta_{\pm}}-\frac{\e}{R_{\pm}\mp\e\eta_{\pm}}\cos\phi\frac{\p
\ub_{\pm,k}}{\p\phi}+\ub_{\pm,k}-\bub_{\pm,k}&=&\frac{\cos\phi}{R_{\pm}\mp\e\eta_{\pm}}\frac{\p
\ub_{\pm,k-1}}{\p\theta}.
\end{eqnarray}
where
\begin{eqnarray}
\bub_{\pm,k}(\eta_{\pm},\theta)=\frac{1}{2\pi}\int_{-\pi}^{\pi}\ub_{\pm,k}(\eta_{\pm},\theta,\phi)\ud{\phi}.
\end{eqnarray}
Here the most important idea is to include the singular term
\begin{eqnarray}
-\frac{\e}{R_{\pm}\mp\e\eta_{\pm}}\cos\phi\frac{\p u^{\e}}{\p\phi}
\end{eqnarray}
in the Milne problem. It is notable that the solution $\ub_{\pm,k}$
depends on $\e$.

\noindent{\it Substitution 3}:
Similar to the classical expansion, we first
consider the boundary condition expansion
\begin{eqnarray}
\u_0+\ub_{\pm,0}&=&g_{\pm},\\
\u_1+\ub_{\pm,1}&=&0,\\
\ldots\nonumber\\
\u_k+\ub_{\pm,k}&=&0.
\end{eqnarray}
The construction of $\u_k$ and $\ub_k$ are as follows:\\
\ \\
Step 1: Construction of $\ub_{\pm,0}$ and $\u_0$.\\
We refer to the cut-off function $\psi$ and $\psi_0$ as
(\ref{cut-off 1}) and (\ref{cut-off 2}), and define the force as
\begin{eqnarray}\label{force}
F_{\pm}(\e;\eta_{\pm})=-\frac{\e\psi(\e\eta_{\pm})}{R_{\pm}\mp\e\eta_{\pm}},
\end{eqnarray}
The zeroth order boundary layer solutions is defined as
\begin{eqnarray}\label{expansion temp 9}
\left\{
\begin{array}{ccl}
\ub_{\pm,0}(\eta_{\pm},\theta,\phi)=\psi_0(\e\eta_{\pm})\bigg(f_{\pm,0}^{\e}(\eta_{\pm},\theta,\phi)-f^{\e}_{\pm,0}(\infty,\theta)\bigg),
& & \vspace{3pt}\\
\pm\sin\phi\dfrac{\p
f_{\pm,0}^{\e}}{\p\eta_{\pm}}+F_{\pm}(\e;\eta_{\pm})\cos\phi\dfrac{\p
f_{\pm,0}^{\e}}{\p\phi}+f_{\pm,0}^{\e}-\bar f_{\pm,0}^{\e}= 0,\vspace{3pt}\\
f_{\pm,0}^{\e}(0,\theta,\phi)= g_{\pm}(\theta,\phi)\ \ \text{for}\
\ \pm\sin\phi>0,\vspace{3pt}\\\rule{0ex}{1em}
\lim_{\eta_{\pm}\rt\infty}f_{\pm,0}^{\e}(\eta_{\pm},\theta,\phi)= f^{\e}_{\pm,0}(\infty,\theta).
\end{array}
\right.
\end{eqnarray}
In contrast to the classical Milne problem (\ref{classical temp 1}),
the key advantage is, due to the geometry, $\dfrac{\p
F_{\pm}(\e;\eta_{\pm})}{\p\theta}=0$, such that (\ref{expansion temp
9}) is invariant in $\theta$.

With the asymptotic behavior of the boundary layer solution in hand, we define the zeroth order interior solution $\u_0(\vx)$ as
\begin{eqnarray}\label{expansion temp 8}
\left\{
\begin{array}{rcl}
\u_0&=&\bu_0,\\\rule{0ex}{1em} \Delta_x\bu_0&=&0\ \ \text{in}\ \
\Omega,\\\rule{0ex}{1em} \bu_0&=&f_{\pm,0}^{\e}(\infty,\theta)\ \
\text{on}\ \ \p\Omega_{\pm}.
\end{array}
\right.
\end{eqnarray}
Notice that the asymptotic state depends on $\epsilon$, then the interior solution depends on $\epsilon$ too.
\ \\
Step 2: Estimates of $\dfrac{\p\ub_{\pm,0}}{\p\theta}$.\\
By Theorem \ref{Milne theorem 2}, we can easily see $f_{\pm,0}^{\e}$
are well-defined in $L^{\infty}(\Omega\times\s^1)$ and approach
to $f_{\pm,0}^{\e}(\infty)$ exponentially fast as
$\eta_{\pm}\rt\infty$. Then we can derive $Z_{\pm}=\ps
(f_{\pm,0}^{\e}-f^{\e}_{\pm,0}(\infty))$ also satisfies the same
type of $\e$-Milne problem
\begin{eqnarray}
\left\{ \begin{array}{ccl}\displaystyle \pm\sin\phi\frac{\p
Z_{\pm}}{\p\eta_{\pm}}+F(\e;\eta)\cos\phi\frac{\p
Z_{\pm}}{\p\phi}+Z_{\pm}-\bar Z_{\pm}=0,\\
Z_{\pm}(0,\theta,\phi)=\dfrac{\p g_{\pm}}{\p\theta}-\dfrac{\p
f_{\pm,0}^{\e}(\infty)}{\p\theta}\ \ \text{for}\ \
\pm\sin\phi>0,\\\rule{0ex}{1em}
\lim_{\eta_{\pm}\rt\infty}Z_{\pm}(\eta_{\pm},\phi)=C.
\end{array}
\right.
\end{eqnarray}
By Theorem \ref{Milne theorem 2}, we can see $Z_{\pm}\rt C$
exponentially fast as $\eta_{\pm}\rt\infty$. It is natural to obtain
this constant $C$ must be zero. Hence, if $g_{\pm}\in
C^r(\Gamma^-)$, it is obvious to check $f_{\pm,0}^{\e}(\infty)\in
C^r(\p\Omega)$. By the standard elliptic estimate in (\ref{expansion
temp 8}), there exists a unique solution $\bu_{\pm,0}\in
W^{r,p}(\Omega)$ for arbitrary $p\geq2$ satisfying
\begin{eqnarray}
\nm{\bu_{\pm,0}}_{W^{r,p}(\Omega)}\leq
C(\Omega)\nm{f_{\pm,0}^{\e}(\infty)}_{W^{r-1/p,p}(\p\Omega)},
\end{eqnarray}
which implies $\nx\bu_{0}\in W^{r-1,p}(\Omega)$, $\nx\bu_{0}\in
W^{r-1-1/p,p}(\p\Omega)$ and $\bu_{0}\in C^{r-1,1-2/p}(\Omega)$.\\
\ \\
Step 3: Construction of $\ub_{\pm,1}$ and $\u_1$.\\
The first order boundary layer solution is defined as
\begin{eqnarray}\label{expansion temp 11}
\left\{
\begin{array}{ccl}
\ub_{\pm,1}(\eta_{\pm},\theta,\phi)=
\psi_0(\e\eta_{\pm})\bigg(f_{\pm,1}^{\e}(\eta_{\pm},\theta,\phi)-f_{\pm,1}^{\e}(\infty,\theta)\bigg),\vspace{3pt}\\
\pm\sin\phi\dfrac{\p
f_{\pm,1}^{\e}}{\p\eta_{\pm}}+F_{\pm}(\e;\eta_{\pm})\cos\phi\dfrac{\p
f_{\pm,1}^{\e}}{\p\phi}+f_{\pm,1}^{\e}-\bar
f_{\pm,1}^{\e}=\dfrac{\psi(\e\eta_{\pm})}{R_{\pm}\mp\e\eta_{\pm}}\cos\phi\dfrac{\p
\ub_{\pm,0}}{\p\theta},\vspace{3pt}\\\rule{0ex}{1em}
f_{\pm,1}^{\e}(0,\theta,\phi)=\vw\cdot\nx\u_0(\vx_{\pm},\vw)\ \
\text{for}\ \ \pm\sin\phi>0,\vspace{3pt}\\\rule{0ex}{1em}
\lim_{\eta_{\pm}\rt\infty}f_{\pm,1}^{\e}(\eta_{\pm},\theta,\phi)=f_{\pm,1}^{\e}(\infty,\theta).
\end{array}
\right.
\end{eqnarray}
 Then, we define the first
order interior solution $\u_1(\vx)$ as
\begin{eqnarray}\label{expansion temp 10}
\left\{
\begin{array}{rcl}
\u_1&=&\bu_1-\vw\cdot\nx\u_0,\\\rule{0ex}{1em} \Delta_x\bu_1&=&0\ \
\text{in}\ \ \Omega,\\\rule{0ex}{1em}
\bu_1&=&f^{\e}_{\pm,1}(\infty,\theta)\ \ \text{on}\ \
\p\Omega_{\pm}.
\end{array}
\right.
\end{eqnarray}
\ \\
Step 4: Estimates of $\dfrac{\p\ub_{\pm,1}}{\p\theta}$.\\
By Theorem \ref{Milne theorem 2}, we can easily see $f_{\pm,1}^{\e}$
is well-defined in $L^{\infty}(\Omega\times\s^1)$ and approaches
$f_{\pm,1}^{\e}(\infty)$ exponentially fast as
$\eta_{\pm}\rt\infty$. Also, since $\nx\u_0\in
W^{r-1-1/p,p}(\p\Omega)$, $\ps f_{\pm,1}^{\e}$ is well-defined and
decays exponentially fast. Hence, $f^{\e}_{\pm,1}(\infty,\theta)\in
W^{r-1-1/p,p}(\p\Omega)$. By the standard elliptic estimate in
(\ref{expansion temp 10}), there exists a unique solution
$\bu_{\pm,1}\in W^{r-1,p}(\Omega)$ and satisfies
\begin{eqnarray}
\nm{\bu_{\pm,1}}_{W^{r-1,p}(\Omega)}\leq
C(\Omega)\nm{f^{\e}_{\pm,1}(\infty)}_{W^{r-1-1/p,p}(\p\Omega)},
\end{eqnarray}
which implies $\nx\bu_1\in W^{r-2,p}(\Omega)$, $\nx\bu_1\in
W^{r-2-1/p,p}(\p\Omega)$ and $\bu_1\in C^{r-2,1-2/p}(\Omega)$.\\
\ \\
Step 5: Generalization to arbitrary $k$.\\
In a similar fashion, as long as $g$ is sufficiently smooth, above
process can be continued. We construct the $k^{th}$ order boundary layer
solution as
\begin{eqnarray}
\left\{
\begin{array}{ccl}
\ub_{\pm,k}(\eta_{\pm},\theta,\phi)=\psi_0(\e\eta_{\pm})\bigg(f_{\pm,k}^{\e}(\eta_{\pm},\theta,\phi)
-f_{\pm,k}^{\e}(\infty,\theta)\bigg),\vspace{3pt}\\
\pm\sin\phi\dfrac{\p
f_{\pm,k}^{\e}}{\p\eta_{\pm}}+F_{\pm}(\e;\eta_{\pm})\cos\phi\dfrac{\p
f_{\pm,k}^{\e}}{\p\phi}+f_{\pm,k}^{\e}-\bar
f_{\pm,k}^{\e}=\dfrac{\psi(\e\eta_{\pm})}{R_{\pm}\mp\e\eta}\cos\phi\dfrac{\p
\ub_{\pm,k-1}}{\p\theta},\vspace{3pt}\\\rule{0ex}{1em}
f_{\pm,k}^{\e}(0,\theta,\phi)=\vw\cdot\nx\u_{k-1}(\vx_{\pm},\vw)\
\ \text{for}\ \ \pm\sin\phi>0,\vspace{3pt}\\\rule{0ex}{1em}
\lim_{\eta_{\pm}\rt\infty}f_{\pm,k}^{\e}(\eta_{\pm},\theta,\phi)= f_{\pm,k}^{\e}(\infty,\theta).
\end{array}
\right.
\end{eqnarray}
Then we define the $k^{th}$ order interior solution as
\begin{eqnarray}
\left\{
\begin{array}{rcl}
\u_k&=&\bu_k-\vw\cdot\nx\u_{k-1},\\\rule{0ex}{1em}
\Delta_x\bu_k&=&0\ \ \text{in}\ \ \Omega,\\\rule{0ex}{1em}
\bu_k&=&f^{\e}_{\pm,k}(\infty,\theta)\ \ \text{on}\ \ \p\Omega.
\end{array}
\right.
\end{eqnarray}
For $g_{\pm}\in C^{k+1}(\Gamma^-)$, the interior solution and
boundary layer solution can be well-defined up to $k^{th}$ order,
i.e. up to $\u_k$ and $\ub_{\pm,k}$.

\section{$\e$-Milne Problem with Geometric Correction}

We consider the $\e$-Milne problem for
$f_{\pm}^{\e}(\eta_{\pm},\theta,\phi)$ in the domain
$(\eta_{\pm},\theta,\phi)\in[0,\infty)\times[-\pi,\pi)\times[-\pi,\pi)$
\begin{eqnarray}\label{Milne problem.}
\left\{
\begin{array}{rcl}\displaystyle
\pm\sin\phi\frac{\p
f_{\pm}^{\e}}{\p\eta_{\pm}}+F_{\pm}(\e;\eta_{\pm})\cos\phi\frac{\p
f_{\pm}^{\e}}{\p\phi}+f_{\pm}^{\e}-\bar f_{\pm}^{\e}&=&S_{\pm}^{\e}(\eta_{\pm},\theta,\phi),\\
f_{\pm}^{\e}(0,\theta,\phi)&=&h_{\pm}^{\e}(\theta,\phi)\ \ \text{for}\ \ \pm\sin\phi>0,\\
\lim_{\eta_{\pm}\rt\infty}f_{\pm}^{\e}(\eta_{\pm},\theta,\phi)&=&f^{\e}_{\pm,\infty}(\theta),
\end{array}
\right.
\end{eqnarray}
where
\begin{eqnarray}\label{Milne average}
\bar
f_{\pm}^{\e}(\eta_{\pm},\theta)=\frac{1}{2\pi}\int_{-\pi}^{\pi}f_{\pm}^{\e}(\eta_{\pm},\theta,\phi)\ud{\phi},
\end{eqnarray}
\begin{eqnarray}
F_{\pm}(\e;\eta_{\pm})=-\frac{\e}{R_{\pm}\mp\e\eta_{\pm}},
\end{eqnarray}
\begin{eqnarray}\label{Milne bounded}
\abs{h_{\pm}^{\e}(\theta,\phi)}\leq M,
\end{eqnarray}
and
\begin{eqnarray}\label{Milne decay}
\abs{S_{\pm}^{\e}(\eta_{\pm},\theta,\phi)}\leq Me^{-K\eta_{\pm}},
\end{eqnarray}
for $M>0$ and $K>0$ uniform in $\e$ and $\theta$.

The well-posedness and decay of $f_{+}$ has been extensively studied
in \cite[Section 4]{AA003}, so in this section, we will focus on the
following:
\begin{eqnarray}\label{Milne problem..}
\left\{
\begin{array}{rcl}\displaystyle
-\sin\phi\frac{\p
f_{-}^{\e}}{\p\eta_{-}}+F_{-}(\e;\eta_{-})\cos\phi\frac{\p
f_{-}^{\e}}{\p\phi}+f_{-}^{\e}-\bar f_{-}^{\e}&=&S_{-}^{\e}(\eta_{-},\theta,\phi),\\
f_{-}^{\e}(0,\theta,\phi)&=&h_{-}^{\e}(\theta,\phi)\ \ \text{for}\ \ -\sin\phi>0,\\
\lim_{\eta_{-}\rt\infty}f_{-}^{\e}(\eta_{-},\theta,\phi)&=&f^{\e}_{-,\infty}(\theta),
\end{array}
\right.
\end{eqnarray}
\ \\
Introducing the sign substitution $\phi_{-}=-\phi$, we have
\begin{eqnarray}
\left\{
\begin{array}{rcl}\displaystyle
\sin\phi_{-}\frac{\p
f_{-}^{\e}}{\p\eta_{-}}-F_{-}(\e;\eta_{-})\cos\phi_{-}\frac{\p
f_{-}^{\e}}{\p\phi_{-}}+f_{-}^{\e}-\bar f_{-}^{\e}&=&S_{-}^{\e}(\eta_{-},\theta,\phi_{-}),\\
f_{-}^{\e}(0,\theta,\phi_{-})&=&h_{-}^{\e}(\theta,\phi_{-})\ \ \text{for}\ \ \sin\phi_{-}>0,\\
\lim_{\eta_{-}\rt\infty}f_{-}^{\e}(\eta_{-},\theta,\phi_{-})&=&f^{\e}_{-,\infty}(\theta),
\end{array}
\right.
\end{eqnarray}
No abusing of the notation, we temporarily ignore the subscript $-$,
superscript $\e$, and the dependence on $\theta$ to have
\begin{eqnarray}\label{Milne problem}
\left\{
\begin{array}{rcl}\displaystyle
\sin\phi\frac{\p f}{\p\eta}-F(\eta)\cos\phi\frac{\p
f}{\p\phi}+f-\bar f&=&S(\eta,\phi),\\
f(0,\phi)&=&h(\phi)\ \ \text{for}\ \ \sin\phi>0,\\
\lim_{\eta\rt\infty}f(\eta,\phi)&=&f_{\infty},
\end{array}
\right.
\end{eqnarray}
where
\begin{eqnarray}\label{Milne average}
\bar f(\eta)=\frac{1}{2\pi}\int_{-\pi}^{\pi}f(\eta,\phi)\ud{\phi},
\end{eqnarray}
\begin{eqnarray}
F(\eta)=-\frac{\e \psi(\e \eta)}{R+\e\eta}
\end{eqnarray}
\begin{eqnarray}\label{Milne bounded}
\abs{h(\phi)}\leq M,
\end{eqnarray}
and
\begin{eqnarray}\label{Milne decay}
\abs{S(\eta,\phi)}\leq Me^{-K\eta},
\end{eqnarray}
for $M>0$ and $K>0$. We may further define a potential function
$V(\eta)$ satisfying
\begin{eqnarray}\label{Potential Function1}
V(\e, 0)=0,\qquad \dfrac{\ud{V}(\e,\eta)}{\ud{\eta}}=-F(\e, \eta)\geq 0 .
\end{eqnarray}
Since $V(\e,\eta)$ is monotonically increasing w.r.t $\eta$ and $supp[\psi] \subset [0,3/4]$, we can derive
\begin{eqnarray} \label{Property of Potential fuction}
 0\leq V(\e,\eta)\leq \ln 4,~~~1\leq \exp(V(\e,\eta))\leq 4 ~~\text{for~ all}~ \eta \in [0,\infty].
\end{eqnarray}

In this section, we introduce some special notation to describe the
norms in the space $(\eta,\phi)\in[0,\infty)\times[-\pi,\pi)$.
Define the $L^2$ norm as follows:
\begin{eqnarray}
\tnm{f(\eta,\cdot)}&=&\bigg(\int_{-\pi}^{\pi}\abs{f(\eta,\phi)}^2\ud{\phi}\bigg)^{1/2},\\
\tnnm{f}&=&\bigg(\int_0^{\infty}\int_{-\pi}^{\pi}\abs{f(\eta,\phi)}^2\ud{\phi}\ud{\eta}\bigg)^{1/2}.
\end{eqnarray}
Define the inner product in $\phi$ space
\begin{eqnarray}
\br{f,g}_{\phi}(\eta)=\int_{-\pi}^{\pi}f(\eta,\phi)g(\eta,\phi)\ud{\phi}.
\end{eqnarray}
Define the $L^{\infty}$ norm as follows:
\begin{eqnarray}
\lnm{f(\eta)}&=&\sup_{\phi\in[-\pi,\pi)}\abs{f(\eta,\phi)},\\
\lnnm{f}&=&\sup_{(\eta,\phi)\in[0,\infty)\times[-\pi,\pi)}\abs{f(\eta,\phi)},\\
\ltnm{f}&=&\sup_{\eta\in[0,\infty)}\bigg(\int_{-\pi}^{\pi}\abs{f(\eta,\phi)}^2\ud{\phi}\bigg)^{1/2}.
\end{eqnarray}
Since the boundary data $h(\phi)$ is only defined on $\sin\phi>0$,
we naturally extend above definitions on this half-domain as
follows:
\begin{eqnarray}
\tnm{h}&=&\bigg(\int_{\sin\phi>0}\abs{h(\phi)}^2\ud{\phi}\bigg)^{1/2},\\
\lnm{h}&=&\sup_{\sin\phi>0}\abs{h(\phi)}.
\end{eqnarray}
\begin{lemma}\label{Milne data}
We have
\begin{eqnarray}
\tnm{h}&\leq&C\lnm{h}\leq CM\\
\tnnm{S}&\leq&C\frac{M}{K}\\
\ltnm{S}&\leq&C\lnnm{S}\leq CM
\end{eqnarray}
\end{lemma}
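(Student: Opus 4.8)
The plan is to establish each of the three displayed estimates by inserting the pointwise bounds $\abs{h(\phi)}\leq M$ and $\abs{S(\eta,\phi)}\leq Me^{-K\eta}$ directly into the definitions of the norms and performing elementary integration over the angular variable, whose range has finite measure. No structural feature of the $\e$-Milne problem (\ref{Milne problem}) is needed: these are purely estimates on the given data, recorded in the $L^2$- and $L^\infty$-based norms that will be used throughout the section.

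For the first line, I would note that $\tnm{h}$ is an $L^2$ norm over the half-circle $\{\sin\phi>0\}\subset[-\pi,\pi)$, a set of measure $\pi$. Bounding the integrand pointwise by $\lnm{h}^2$ and pulling the constant outside gives $\tnm{h}\leq\sqrt{\pi}\,\lnm{h}$, and then $\lnm{h}\leq M$ is immediate from the hypothesis on the boundary data. This yields both inequalities of the first line with $C=\sqrt{\pi}$. For the third line I would argue similarly over the full range $\phi\in[-\pi,\pi)$, of measure $2\pi$: for each fixed $\eta$ one has $\int_{-\pi}^{\pi}\abs{S(\eta,\phi)}^2\ud{\phi}\leq 2\pi\,\lnnm{S}^2$, and taking the supremum in $\eta$ gives $\ltnm{S}\leq\sqrt{2\pi}\,\lnnm{S}$. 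The bound $\lnnm{S}\leq M$ follows at once since $e^{-K\eta}\leq1$ for $\eta\geq0$ and $K>0$.

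The only line that actually uses the exponential decay is the middle one. Here I would square the norm, insert $\abs{S}^2\leq M^2 e^{-2K\eta}$, separate the $\phi$ and $\eta$ integrals, and evaluate $\int_0^\infty e^{-2K\eta}\ud{\eta}=\tfrac{1}{2K}$, obtaining $\tnnm{S}^2\leq\frac{\pi M^2}{K}$ and hence $\tnnm{S}\ls M/\sqrt{K}$. This is the single point where a little care is warranted: the exponential integration naturally produces the factor $K^{-1/2}$, so the stated form $C\,M/K$ should be read with this in mind (it is implied in the regime $K\ls 1$, which is the case of interest). I do not expect any genuine obstacle in this lemma; it is an elementary bookkeeping step, and the chief thing to track is the correct power of $K$ emerging from the $\eta$-integration.
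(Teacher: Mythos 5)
Your proposal is correct and is exactly the direct computation the paper has in mind: the authors omit the proof entirely with the remark that it ``can be verified via direct computation,'' and your insertion of the pointwise bounds into the norm definitions, with the angular integrals contributing only finite-measure constants, is that computation. Your observation that the $\eta$-integration naturally yields $\tnnm{S}\lesssim M/\sqrt{K}$ rather than $M/K$ is a genuine point of care worth recording; the discrepancy is harmless since every subsequent use of the lemma is through the combination $C(1+M+M/K)$, which dominates $M/\sqrt{K}$ for all $K>0$.
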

\begin{proof}
They can be verified via direct computation, so we omit the proofs
here.
\end{proof}

\subsection{$L^2$ Estimates}

\subsubsection{Finite Slab with $\bar S=0$}

Consider the $\e$-Milne problem for $f^{L}(\eta,\phi)$ in a finite
slab $(\eta,\phi)\in[0,L]\times[-\pi,\pi)$
\begin{eqnarray}\label{Milne finite problem LT}
\left\{
\begin{array}{rcl}\displaystyle
\sin\phi\frac{\p f^{L}}{\p\eta}-F(\eta)\cos\phi\frac{\p
f^{L}}{\p\phi}+f^{L}-\bar f^{L}&=&S(\eta,\phi),\\
f^{L}(0,\phi)&=&h(\phi)\ \ \text{for}\ \ \sin\phi>0,\\
f^{L}(L,\phi)&=&f^{L}(L,R\phi),
\end{array}
\right.
\end{eqnarray}
where $R\phi=-\phi$ and $S$ satisfies $\bar S(\eta)=0$ for any
$\eta$. We may decompose the solution
\begin{eqnarray}
f^{L}(\eta,\phi)=q_f^{L}(\eta)+r_f^{L}(\eta,\phi),
\end{eqnarray}
where the hydrodynamical part $q_f^{L}$ of $f^L$, and the microscopic part $r_f^{L}$ is
the orthogonal complement, i.e.
\begin{eqnarray}\label{hydro}
q_f^{L}(\eta)= \bar{f}^L=\frac{1}{2\pi}\int_{-\pi}^{\pi}f^{L}(\eta,\phi)\ud{\phi},\quad
r_f^{L}(\eta,\phi)=f^{L}(\eta,\phi)-q_f^{L}(\eta).
\end{eqnarray}
In the following, we simply write
$f^{L}=q^{L}+r^{L}$ without any confusion.

\begin{lemma}\label{Milne finite LT}
Assume $\bar S(\eta)=0~(\eta\in[0,L])$ and $S(\eta,\phi)$ satisfy (\ref{Milne
bounded}) and (\ref{Milne decay}). Then there exists a solution
$f(\eta,\phi) \in L^2([0,L]\times[0, 2\pi])$ to the finite slab problem (\ref{Milne finite problem
LT}) such that
\begin{eqnarray}
\label{Milne temp 1}
\int_0^L\tnm{r^{L}(\eta,\cdot)}^2\ud{\eta}&\leq&C\bigg(M+\frac{M}{K}\bigg)^2<\infty,\\
|q^{L}(\eta)| &\leq&
C\bigg(1+M+\frac{M}{K}\bigg)(1+\eta^{1/2})+\tnm{r^{L}(\eta,\cdot)}\label{Milne
temp 2},\\
\br{\sin\phi,r^{L}}_{\phi}(\eta)&=&0,\label{Milne temp 3}
\end{eqnarray}
for arbitrary $\eta\in[0,L]$.
\end{lemma}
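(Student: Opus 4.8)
The plan is to establish Lemma~\ref{Milne finite LT} via a standard energy method for the finite-slab $\e$-Milne problem, exploiting the decomposition $f^{L}=q^{L}+r^{L}$ into hydrodynamic and microscopic parts. First I would multiply equation (\ref{Milne finite problem LT}) by $f^{L}$ and integrate over $\phi\in[-\pi,\pi)$. The collision term $f^{L}-\bar f^{L}$ contributes $\tnm{r^{L}(\eta,\cdot)}^2$ because the microscopic part is orthogonal to constants in $\phi$. The transport term $\sin\phi\,\p_\eta f^{L}$ produces $\frac{1}{2}\frac{\ud}{\ud\eta}\br{\sin\phi,(f^{L})^2}_\phi$ after integration, while the geometric correction term $-F(\eta)\cos\phi\,\p_\phi f^{L}$ must be handled by integration by parts in $\phi$; I would verify that $\p_\phi(\cos\phi)=-\sin\phi$ combines with the transport term so that the whole first-order-derivative contribution assembles into a single total $\eta$-derivative, possibly weighted by the potential $V(\e,\eta)$ from (\ref{Potential Function1})--(\ref{Property of Potential fuction}). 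This is the key structural point that makes the energy estimate close.

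Next I would integrate the resulting identity over $\eta\in[0,L]$ and control the boundary terms at $\eta=0$ and $\eta=L$. At $\eta=0$ the incoming data $h(\phi)$ on $\sin\phi>0$ is bounded by $M$ via Lemma~\ref{Milne data}, so the inflow contribution is $O(M^2)$; the outflow part at $\eta=0$ has a favorable sign. At $\eta=L$ the specular reflection condition $f^{L}(L,\phi)=f^{L}(L,R\phi)$ with $R\phi=-\phi$ should make the net flux $\br{\sin\phi,(f^{L})^2}_\phi(L)$ vanish, since $\sin\phi$ is odd under $\phi\mapsto-\phi$ while $(f^{L})^2$ becomes even. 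Combining these with the bound on the source $\tnnm{S}\leq CM/K$ from Lemma~\ref{Milne data} (using Cauchy--Schwarz on the $\int S f^{L}$ term, and here crucially $\bar S=0$ so that $\int S f^{L}=\int S r^{L}$ killing any coupling to the possibly-unbounded $q^{L}$) yields the microscopic bound (\ref{Milne temp 1}).

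For (\ref{Milne temp 3}), I would test the equation against the single mode $\sin\phi$ (rather than the full $f^{L}$): integrating $\br{\sin\phi,\,\cdot\,}_\phi$ against (\ref{Milne finite problem LT}), the collision term drops because $\br{\sin\phi,f^{L}-\bar f^{L}}_\phi=\br{\sin\phi,r^{L}}_\phi$ and one shows the remaining flux is $\eta$-independent; the reflection boundary condition at $\eta=L$ forces the conserved flux $\br{\sin\phi,r^{L}}_\phi$ to be zero, giving the orthogonality relation for all $\eta$. Finally, (\ref{Milne temp 2}) follows by writing $q^{L}(\eta)-q^{L}(0)$ as an integral of its $\eta$-derivative: using the macroscopic equation obtained by the $\sin\phi$-test just established, one bounds $|\frac{\ud}{\ud\eta}q^{L}|$ in terms of $\tnm{r^{L}}$ and $S$, then integrates and applies Cauchy--Schwarz in $\eta$ to produce the $(1+\eta^{1/2})$ growth from the $L^2$-in-$\eta$ control of $r^{L}$.

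The main obstacle I anticipate is the geometric correction term $-F(\eta)\cos\phi\,\p_\phi f^{L}$: unlike the flat Milne problem, this term couples the $\eta$- and $\phi$-derivatives and can destroy the clean total-derivative structure of the energy identity. The resolution should be the potential function $V(\e,\eta)$ satisfying $V'=-F\geq0$ together with its uniform bound $1\leq e^{V}\leq 4$ from (\ref{Property of Potential fuction}); introducing the weight $e^{\pm V(\e,\eta)}$ (or absorbing $F$ into an exact $\eta$-derivative after the $\phi$-integration by parts) should convert the problematic term into a controllable form whose weight stays comparable to $1$ uniformly in $\e$, so that all constants in (\ref{Milne temp 1})--(\ref{Milne temp 2}) remain $\e$-independent. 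Verifying this cancellation and tracking that the specular boundary term at $\eta=L$ indeed vanishes under the geometric correction will be the delicate bookkeeping.
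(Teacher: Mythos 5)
Your plan for the three estimates follows essentially the same route as the paper: the energy identity obtained by testing against $f^{L}$, with the geometric term absorbed into a total derivative via the weight $\ue^{V(\eta)}$ (where $V'=-F$), the favorable sign of the outflow at $\eta=0$, the vanishing of the flux at $\eta=L$ by specular reflection, and the use of $\bar S=0$ to replace $\br{S,f^{L}}_{\phi}$ by $\br{S,r^{L}}_{\phi}$ — this is exactly how (\ref{Milne temp 1}) is proved. Likewise your derivation of (\ref{Milne temp 2}) by bounding the $\eta$-derivative of a macroscopic moment in terms of $\tnm{r^{L}}$ and $S$ and applying Cauchy--Schwarz in $\eta$ mirrors the paper's argument with $\beta(\eta)=\br{\sin^2\phi,f^{L}}_{\phi}$. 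Two local inaccuracies: (i) the orthogonality relation (\ref{Milne temp 3}) comes from integrating the equation over $\phi$ (testing against the constant $1$), which gives $\frac{\ud{}}{\ud{\eta}}\br{\sin\phi,f^{L}}_{\phi}=F\br{\sin\phi,f^{L}}_{\phi}$; testing against $\sin\phi$, as you describe, instead produces the $\beta$-equation used for (\ref{Milne temp 2}), and the collision term there is precisely $\br{\sin\phi,r^{L}}_{\phi}$, which does not drop. (ii) For (\ref{Milne temp 2}) you need a bound on $\beta(0)$, which involves the \emph{outgoing} trace $f^{L}(0,\phi)$ for $\sin\phi<0$; this is not given data and does not have a favorable sign in $\br{\sin^2\phi,f^{L}}_{\phi}(0)$, so it must be recovered from the energy identity itself (the paper bounds $\br{f^{L},f^{L}\abs{\sin\phi}}_{\phi}(0)$ using the relation for $\alpha(0)$). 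You gloss over this.

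The genuine gap is existence. Your entire argument is an a priori estimate: "multiply the equation by $f^{L}$" presupposes a solution. The lemma asserts that a solution in $L^2$ exists, and this is not routine here because the collision operator $f\mapsto \bar f$ is not a small perturbation — a naive iteration on $\bar f$ does not contract. The paper's proof spends two of its four steps on this: it first solves the penalized problem $\l f_{\l}+\cdots$ by an iteration in which $\bar f$ is lagged one step, proving contraction with factor $\frac{1}{1+\l}$ (which degenerates as $\l\rt0$, whence the $\l$-dependent bound $\frac{1+\l}{\l}$), then establishes the estimates (\ref{Milne temp 37})--(\ref{Milne temp 38}) \emph{uniformly in $\l$}, and only then extracts a weak limit $f_{\l}^{L}\rightharpoonup f^{L}$ as $\l\rt0$. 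Without the penalization-and-limit scheme (or some substitute construction), your argument proves bounds for a solution you have not shown to exist, so the proposal as written does not establish the lemma.
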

\begin{proof}
We divide the proof into several steps: we firstly establish the solution of the penalty problem. Secondly, the uniform estimates of the solution will be obtained. Then, by passing the limit, we achieve a solution of (\ref{Milne finite problem LT}) which satisfies (\ref{Milne temp 1}) and (\ref{Milne temp 2}). At last, the orthogonal property (\ref{Milne temp 3}) follows easily.\\
\ \\
Step 1: {\emph{Existence of the solution for the penalty problem}}

Firstly, we consider the existence of the solution for the penalized $\e$-Milne
equation with $\l>0$
\begin{eqnarray}\label{Milne finite problem LT penalty.}
\left\{
\begin{array}{rcl}\displaystyle
\l f_{\l}+\sin\phi\frac{\p f_{\l}}{\p\eta}-F(\eta)\cos\phi\frac{\p
f_{\l}}{\p\phi}+f_{\l}&=&S(\eta,\phi),\\
f_{\l}(0,\phi)&=&h(\phi)\ \ \text{for}\ \ \sin\phi>0,\\
f_{\l}(L,\phi)&=&f_{\l}(L,R\phi).
\end{array}
\right.
\end{eqnarray}
Here $\tnnm{S}<\infty$ and $\tnm{h}<\infty$.

For the existence of the solution for (\ref{Milne finite problem LT penalty.}), we also construct the iterative sequence. Define $f_{0}^{L}=0$ and $\{f_{m}^{L}\}_{m=1}^{\infty}$ are the solution of the following problem
\begin{eqnarray}\label{mt31}
\left\{
\begin{array}{rcl}\displaystyle
\l f_{m}^{L}+\sin\phi\frac{\p
f_{m}^{L}}{\p\eta}-F(\eta)\cos\phi\frac{\p
f_{m}^{L}}{\p\phi}+f_{m}^{L}-\bar f_{m-1}^{L}&=&S(\eta,\phi),\\
f_{m}^{L}(0,\phi)&=&h(\phi)\ \ \text{for}\ \ \sin\phi>0,\\
f_{m}^{L}(L,\phi)&=&f_{m}^{L}(L,R\phi).
\end{array}
\right.
\end{eqnarray}
For $m=1$, we have $\bar f_0^{L}=0$. Multiplying $f_{1}^{L}$ on both sides of (\ref{mt31}) and integrating over $\phi\in[-\pi,\pi]$, we obtain
\begin{eqnarray}
\half\frac{\ud{}}{\ud{\eta}}\br{f_1^{L},f_1^{L}\sin\phi}_{\phi}-\half F(\eta)\br{f_1^{L},f_1^{L}\sin\phi}_{\phi}
+ (1+\l)\tnm{f_1^{L}}^2 = \br{ S,~f_1^{L}}_{\phi}.
\end{eqnarray}
It leads to
\begin{eqnarray}
&& \exp[V(L)]\br{f_1^{L},f_1^{L}\sin\phi}_{\phi}(L) + (1+\l)\int_{0}^{L}\exp[V(y)]
\tnm{f_{1}^{L}(y,\cdot)}^2 \ud{y}\\
&  =& \br{f_1^{L},f_1^{L}\sin\phi}_{\phi}(0)+\int_{0}^{L}\exp[V(y)]
 \langle S, f_1^L \rangle_{\phi}\ud{y}\no\\
 &\leq & || h||_{L^2}  +\int_{0}^{L}\exp[V(y)]
 \bigg[\frac{1}{2}|| S(y,\cdot)||_{L^2}^2 +\frac{1}{2} ||f_1^L(y,\cdot)||_{L^2}^2 \bigg] \ud{y} .\nonumber
\end{eqnarray}
where $V$ is the potential function which is defined in (\ref{Potential Function1}). It is easy to show that
$\exp[V(L)]\br{f_1^{L},f_1^{L}\sin\phi}_{\phi}(L)=0 $ because the specular refection condition at $\eta=L$. From (\ref{Property of Potential fuction}) and the above inequality, one obtains that
\begin{eqnarray}
\int_0^L\tnm{f_{1}^{L}(y,\cdot)}^2\ud{y} \leq  8
\bigg(\tnm{h}^2+\tnnm{S}^2\bigg).
\end{eqnarray}
For $m\geq1$, we set $g_{m}^{L}=f_{m}^{L}-f_{m-1}^{L}$. It satisfies
\begin{eqnarray}\label{mt 03}
\left\{
\begin{array}{rcl}\displaystyle
\l g_{m}^{L}+\sin\phi\frac{\p
g_{m}^{L}}{\p\eta}-F(\eta)\cos\phi\frac{\p
g_{m}^{L}}{\p\phi}+g_{m}^{L}-\bar g_{m-1}^{L}&=&0,\\
g_{m}^{L}(0,\phi)&=&0\ \ \text{for}\ \ \sin\phi>0,\\
g_{m}^{L}(L,\phi)&=&g_{m}^{L}(L,R\phi).
\end{array}
\right.
\end{eqnarray}
By a similar argument, we can derive that
\begin{eqnarray}
&& \exp[V(L)]\br{g_{m}^{L},g_{m}^{L}\sin\phi}_{\phi}(L)+\int_{0}^{L}\exp[V(y)]
\bigg((1+\l)\tnm{g_{m}^{L}(y,\cdot)}^2+ \br{g_{m}^{L},\bar
g_{m-1}^{L}}_{\phi}(y)\bigg)\ud{y} \\
&& \leq  \br{g_{m}^{L},g_{m}^{L}\sin\phi}_{\phi}(0) \nonumber
\end{eqnarray}
Due to the facts $\br{g_{m}^{L},g_{m}^{L}\sin\phi}_{\phi}(0)\leq 0,~~\exp[V(L)]\br{g_{m}^{L},g_{m}^{L}\sin\phi}_{\phi}(L)=0$,
we have
\begin{eqnarray}
\int_0^L\exp[V(y)]\tnm{g_{m}^{L}(y,\cdot)}^2\ud{y}&\leq&\frac{1}{1+\l}\int_0^L\exp[V(y)]\br{g_{m}^{L},\bar g_{m-1}^{L}}_{\phi}(y)\ud{y}\\
&\leq&\frac{1}{2(1+\l)}\int_0^L\exp[V(y)]\bigg(\tnm{g_{m}^{L}(y,\cdot)}^2+ \tnm{g_{m-1}^{L}(y,\cdot)}^2\bigg)\ud{y}.\no
\end{eqnarray}
It implies
\begin{eqnarray}
\int_0^L\exp[V(y)]\tnm{g_{m}^{L}(y,\cdot)}^2\ud{y}&\leq&\frac{1}{1+\l}\int_0^L\exp[V(y)]\tnm{g_{m-1}^{L}(y,\cdot)}^2\ud{y}.
\end{eqnarray}
Since $\exp[V(y)] \geq 1$ for all $y\geq 0$, we know that $\{f^L_m\}_{m=1}^{\infty}$ is a contraction sequence for $\l>0$.
Let $m \rightarrow \infty$, we obtain a solution $f_{\l}^{L}$ of
(\ref{Milne finite problem LT penalty.}). Moreover, it satisfies
\begin{eqnarray}\label{One Estimate of Milne finite problem LT penalty}
\tnnm{f_{\l}^{L}}^2 \leq \tnnm{f_{1}^{L}}^2 +\sum_{m=1}^{\infty}\tnnm{g_{m}^{L}}^2 \leq \frac{8(1+\l)}{\l}\bigg(\tnm{h}^2+\tnnm{S}^2\bigg).
\end{eqnarray}
In the following, we prove the uniqueness of the solution for (\ref{Milne finite problem LT penalty.}). Assume that there are two solutions $f_1^L$ and $f_2^L$ of (\ref{Milne finite problem LT penalty.}). Define $f^L_{\ast}=f_1-f_2$, it satisfies
\begin{eqnarray}
\left\{
\begin{array}{rcl}\displaystyle
\l f^L_{\ast}+\sin\phi\frac{\p
f^L_{\ast}}{\p\eta}-F(\eta)\cos\phi\frac{\p
f^L_{\ast}}{\p\phi}+f^L_{\ast}-\bar f^L_{\ast}&=&0,\\
f^L_{\ast}(0,\phi)&=&0\ \ \text{for}\ \ \sin\phi>0,\\
f^L_{\ast}(L,\phi)&=&f^L_{\ast}(L,R\phi).
\end{array}
\right.
\end{eqnarray}
Similarly, we have
\begin{eqnarray}
\tnnm{f_{\ast}^{L}}^2 \leq \frac{1}{1+\l}\tnnm{\bar{f}_{\ast}^{L}}^2 \leq \frac{1}{1+\l}\tnnm{f_{\ast}^{L}}^2,
\end{eqnarray}
which further implies $f_{\ast}^{L}=0$ when $\lambda >0$. Therefore, the solution to (\ref{Milne finite problem LT penalty.}) is unique in $L^{2}([0,L]\times[-\pi,\pi))$.\\
\ \\
Step 2: {\emph{The uniform estimates of $r_{\l}^L$ and $q_{\l}^L$ w.r.t. $\l$}.}\\
{\it \bf Claim}: We claim that $r_{\l}^{L}$ satisfies
\begin{eqnarray}\label{Milne temp 37}
\int_{0}^L\tnm{r_{\l}^{L}(\eta,\cdot)}^2\ud{\eta} \leq
4\tnm{h}^2+8\int_{0}^L\tnm{S(\eta,\cdot)}^2\ud{\eta}
\end{eqnarray}
and, for any $0\leq \eta \leq L$, $q_{\l}^{L}$ satisfies
\begin{eqnarray}\label{Milne temp 38}
|q^{L}_{\l}(\eta)|\leq
16\pi (1+\l+|F(\eta)|)(1+\eta^{1/2}) ( \tnm{h} + \tnnm{S}) + \tnm{r_{\l}^L(\eta,\cdot)}.
\end{eqnarray}

\underline{\emph{The proof of (\ref{Milne temp 37}):}} The assumption $\bar S(\eta)=0$ leads to
\begin{eqnarray}
\br{S,f_{\l}^{L}}_{\phi}(\eta)=\br{S,q_{\l}^{L}}_{\phi}(\eta)
+\br{S,r_{\l}^{L}}_{\phi}(\eta)= \br{S,r_{\l}^{L}}_{\phi}(\eta).
\end{eqnarray}
Multiplying $f_{\l}^{L}$
on both sides of (\ref{Milne finite problem LT penalty.}) and
integrating over $\phi\in[-\pi,\pi)$, we get the energy estimate
\begin{eqnarray}\label{Milne temp 31}
&& \half\frac{\ud{}}{\ud{\eta}}\br{f_{\l}^{L}
,f_{\l}^{L}\sin\phi}_{\phi}(\eta) - \half F(\eta)\br{f_{\l}^{L}
,f_{\l}^{L}\sin\phi}_{\phi}(\eta)\\
&&\hspace{2cm} =-\l\tnm{f_{\l}^{L}(\eta,\cdot)}^2-\tnm{r_{\l}^{L}(\eta,\cdot)}^2+\br{S,r_{\l}^{L}}_{\phi}(\eta).\nonumber
\end{eqnarray}
Define
\begin{eqnarray}
\alpha(\eta)=\half\br{f_{\l}^{L},f_{\l}^{L}\sin\phi }_{\phi}(\eta).
\end{eqnarray}
Then (\ref{Milne temp 31}) can be rewritten as follows
\begin{eqnarray}
\frac{\ud{\alpha}}{\ud{\eta}}-F(\eta)\alpha(\eta)=-\l\tnm{f_{\l}^{L}(\eta,\cdot)}^2-\tnm{r_{\l}^{L}(\eta,\cdot)}^2
+\br{S,r_{\l}^{L}}_{\phi}(\eta).
\end{eqnarray}
The specular reflexive boundary
$f_{\l}^{L}(L,\phi)=f_{\l}^{L}(L,R\phi)$ ensures $\alpha(L)=0$. We can integrate above on $[\eta,L]$ and $[0,\eta]$ respectively to
obtain
\begin{eqnarray}
\label{Milne temp 4}
&&\exp[V(\eta)]\alpha(\eta)=\int_{\eta}^L\exp[V(y)]
\left(\l\tnm{f_{\l}^{L}(y,\cdot)}^2+\tnm{r_{\l}^{L}(y,\cdot)}^2-\br{S,r_{\l}^{L}}_{\phi}(y)\right)\ud{y},\\
\label{Milne temp 5}
&& \exp[V(\eta)]\alpha(\eta)=\alpha(0)+\int_{0}^{\eta}\exp[V(y)]
\left(-\l\tnm{f_{\l}^{L}(y,\cdot)}^2-\tnm{r_{\l}^{L}(y,\cdot)}^2+\br{S,r_{\l}^{L}}_{\phi}(y)\right)\ud{y}.
\end{eqnarray}
Hence, based on (\ref{Milne temp 4}), we have
\begin{eqnarray}\label{Milne temp 36}
\alpha(\eta)\geq\int_{\eta}^L\exp[V(y)-V(\eta)]\Big(-\br{S,r_{\l}^{L}}_{\phi}(y)\Big)\ud{y}.
\end{eqnarray}
Also, (\ref{Milne temp 5}) implies
\begin{eqnarray}
\alpha(\eta)&\leq&\alpha(0)\exp[-V(\eta)]+\int_{0}^{\eta}\exp[V(\eta)-V(y)]
\bigg(\br{S,r_{\l}^{L}}_{\phi}(y)\bigg)\ud{y}\\
&\leq&2\tnm{h}^2+\int_{0}^{\eta}\exp [V(\eta)-V(y)]
\bigg(\br{S,r_{\l}^{L}}_{\phi}(y)\bigg)\ud{y}\nonumber,
\end{eqnarray}
due to the fact
\begin{eqnarray}
\alpha(0)=\half\br{\sin\phi
f^{L}_{\l},f^{L}_{\l}}_{\phi}(0)\leq\half\bigg(\int_{\sin\phi>0}h^2(\phi)\sin\phi
\ud{\phi}\bigg)\leq \half\tnm{h}^2.
\end{eqnarray}
Then in (\ref{Milne temp 5}) taking $\eta=L$, from $\alpha(L)=0$, we
have
\begin{eqnarray}\label{Milne temp 6}
\int_{0}^L\exp[V(y)]\tnm{r_{\l}^{L}(y,\cdot)}^2\ud{y}
&\leq&\alpha(0)+\int_{0}^L\exp[V(y)]\br{S,r_{\l}^{L}}_{\phi}(y)\ud{y}\\
&\leq&
\half\tnm{h}^2+\int_{0}^L\exp[V(y)] \br{S,r_{\l}^{L}}_{\phi}(y)\ud{y}\nonumber.
\end{eqnarray}
On the other hand, we can directly estimate as follows:
\begin{eqnarray}\label{Milne temp 7}
\int_{0}^L\exp[V(y)]\tnm{r_{\l}^{L}(y,\cdot)}^2\ud{y}\geq
\int_{0}^L\tnm{r_{\l}^{L}(y,\cdot)}^2\ud{y}.
\end{eqnarray}
Combining (\ref{Milne temp 6}) and (\ref{Milne temp 7}) yields
\begin{eqnarray}\label{Milne temp 33}
\int_{0}^L\tnm{r_{\l}^{L}(\eta,\cdot)}^2\ud{\eta}&\leq&
\tnm{h}^2+4\int_{0}^L \left|\br{S,r_{\l}^{L}}_{\phi}(y)\right|\ud{y}.
\end{eqnarray}
By Cauchy's inequality, we have
\begin{eqnarray}\label{Milne temp 34}
\int_0^L\abs{\br{S,r_{\l}^{L}}_{\phi}(y)}\ud{y}
\leq
\frac{1}{8}\int_{0}^L\tnm{r_{\l}^{L}(\eta,\cdot)}^2\ud{\eta}+2\int_{0}^L\tnm{S(\eta,\cdot)}^2\ud{\eta}.
\end{eqnarray}
Therefore, summarizing (\ref{Milne temp 33}) and (\ref{Milne temp
34}), we deduce (\ref{Milne temp 37}).\\

\underline{\emph{The proof of (\ref{Milne temp 38}):}} Multiplying $\sin\phi$ on
both sides of (\ref{Milne finite problem LT penalty.}) and
integrating over $\phi\in[-\pi,\pi)$ lead to
\begin{eqnarray}
\label{Milne temp 18}
\frac{\ud{}}{\ud{\eta}}\br{\sin^2\phi,f_{\l}^{L}}_{\phi}(\eta)&=&-\l\br{\sin\phi,
f_{\l}^{L}}_{\phi}(\eta)
-\br{\sin\phi,r_{\l}^{L}}_{\phi}(\eta)\\
&& +\half F(\eta)\br{\sin(2\phi),\frac{\p
f_{\l}^{L}}{\p\phi}}_{\phi}(\eta)+\br{\sin\phi,S}_{\phi}(\eta)\nonumber.
\end{eqnarray}
It is nature that
\begin{eqnarray}
 \br{\sin(\phi),q_{\l}^L}_{\phi} = q_{\l}^L \br{\sin(\phi),1}=0,~~~\br{\cos(2\phi),q_{\l}^L } = q_{\l}^L \br{\cos(2\phi),1}_{\phi}=0.
\end{eqnarray}
We can further integrate by parts as follows:
\begin{eqnarray}
\half F(\eta)\br{\sin(2\phi),\frac{\p
f_{\l}^{L}}{\p\phi}}_{\phi}(\eta)=-F(\eta)\br{\cos(2\phi),f_{\l}^{L}}_{\phi}(\eta)=-F(\eta)\br{\cos(2\phi),r_{\l}^{L}}_{\phi}(\eta),
\end{eqnarray}
to obtain
\begin{eqnarray}
\frac{\ud{}}{\ud{\eta}}\br{\sin^2\phi,f_{\l}^{L}}_{\phi}(\eta)&=&-\l\br{\sin\phi,
r_{\l}^{L}}_{\phi}(\eta)
-\br{\sin\phi,r_{\l}^{L}}_{\phi}(\eta)\\
&&-F(\eta)\br{\cos(2\phi),r_{\l}^{L}}_{\phi}(\eta)+\br{\sin\phi,S}_{\phi}(\eta).\nonumber
\end{eqnarray}
Define
\begin{eqnarray}\label{Milne temp 81}
\beta_{\l}^{L}(\eta)=\br{\sin^2\phi,f_{\l}^{L}}_{\phi}(\eta)
\end{eqnarray}
and
\begin{eqnarray}\label{Milne temp 82}
D^{L}_{\l}(\eta,\phi)=-\l\br{\sin\phi,
f_{\l}^{L}}_{\phi}(\eta)-\br{\sin\phi,r_{\l}^{L}}_{\phi}(\eta)-F(\eta)\br{\cos(2\phi),r_{\l}^{L}}_{\phi}(\eta)+\br{\sin\phi,S}_{\phi}(\eta).
\end{eqnarray}
Since
\begin{eqnarray}
-\l\br{\sin\phi, f_{\l}^{L}}_{\phi}(\eta)&=&-\l\br{\sin\phi,
r_{\l}^{L}}_{\phi}(\eta)-\l\br{\sin\phi,
q_{\l}^{L}}_{\phi}(\eta)=-\l\br{\sin\phi, r_{\l}^{L}}_{\phi}(\eta).
\end{eqnarray}
we can further get
\begin{eqnarray}
D^{L}_{\l}(\eta)=-\l\br{\sin\phi,
r_{\l}^{L}}_{\phi}(\eta)-\br{\sin\phi,r_{\l}^{L}}_{\phi}(\eta)-F(\eta)\br{\cos(2\phi),r_{\l}^{L}}_{\phi}(\eta)+\br{\sin\phi,S}_{\phi}(\eta).
\end{eqnarray}
Then we can simplify (\ref{Milne temp 18}) as follows:
\begin{eqnarray}\label{Milne temp 35}
\frac{\ud{\beta^{L}_{\l}}}{\ud{\eta}}=D^{L}_{\l}(\eta,\phi),
\end{eqnarray}
We can integrate over $[0,\eta]$ in (\ref{Milne temp 35}) to obtain
\begin{eqnarray}\label{Milne t 03}
\beta^{L}_{\l}(\eta)=\beta^{L}_{\l}(0)+\int_0^{\eta}D^{L}_{\l}(y)\ud{y}.
\end{eqnarray}
On the hand, one has
\begin{eqnarray}
\beta^{L}_{\l}(0)&=&\br{\sin^2\phi,f_{\l}^{L}}_{\phi}(0)\leq
\Big(\br{
f_{\l}^{L},f_{\l}^{L}\abs{\sin\phi}}_{\phi}(0)\Big)^{1/2}||\sin\phi||_{L^3}^{3/2}\\
&\leq&
\frac{16}{3}\Big(\br{
f_{\l}^{L},f_{\l}^{L}\abs{\sin\phi}}_{\phi}(0)\Big)^{1/2}.\nonumber
\end{eqnarray}
Obviously, we have
\begin{eqnarray}
\br{f_{\l}^{L}, f_{\l}^{L}\abs{\sin\phi}
}_{\phi}(0)=\int_{\sin\phi>0}h^2(\phi)\sin\phi
\ud{\phi}-\int_{\sin\phi<0}|f_{\l}^{L}(0,\phi)|^2\sin\phi\ud{\phi}.
\end{eqnarray}
However, based on the definition of $\alpha(\eta)$ and (\ref{Milne
temp 36}), we can obtain
\begin{eqnarray}
2\alpha(0)&=&\int_{\sin\phi>0} h^2(\phi)\sin\phi\ud{\phi}+\int_{\sin\phi<0}
|f_{\l}^{L}(0,\phi)|^2\sin\phi\ud{\phi}\\
&\geq&
2\int_{0}^L\exp[V(y)] \Big(-\br{S,r_{\l}^{L}}_{\phi}(y)\Big)\ud{y}\nonumber\\
&\geq&-\half\int_0^L|\br{S,r_{\l}^{L}}_{\phi}(y)|\ud{y}\nonumber.
\end{eqnarray}
Hence, we can deduce
\begin{eqnarray}
\br{f_{\l}^{L}, f_{\l}^{L}\abs{\sin\phi}
}_{\phi}(0) &\leq& 2 \int_{\sin\phi>0}
h^2(\phi)\sin\phi\ud{\phi}+\half\int_{0}^L |\br{S,r_{\l}^{L}}_{\phi}(y)|\ud{y}\\
&\leq&
2 \tnm{h}^2+\frac{1}{4}\int_{0}^L\tnm{r_{\l}^{L}(\eta,\cdot)}^2\ud{\eta}
+\frac{1}{4}\int_{0}^L\tnm{S(\eta,\cdot)}^2\ud{\eta}\nonumber.
\end{eqnarray}
From (\ref{Milne temp 37}), we can deduce
\begin{eqnarray}\label{Milne t 05}
\beta^{L}_{\l}(0)^2&\leq&
\frac{256}{9}\br{f_{\l}^{L},f_{\l}^{L}\abs{\sin\phi}}_{\phi}(0)\\
&\leq& 128\tnm{h}^2+16\int_{0}^L\tnm{r_{\l}^{L}(\eta,\cdot)}^2\ud{\eta}+16\int_{0}^L\tnm{S(\eta,\cdot)}^2\ud{\eta}\notag\\
&\leq&196\tnm{h}^2+196\int_{0}^L\tnm{S(\eta,\cdot)}^2\ud{\eta}\nonumber.
\end{eqnarray}
On the other hand, since $D^{L}_{\l}$ depends on
$r_{\l}^{L}$ and is independent of $q_{\l}^{L}$, we can
directly estimate
\begin{eqnarray}\label{Milne t 04}
\abs{D^{L}_{\l}(\eta)}&\leq&
2\pi\bigg(1+\l+\abs{F(\eta)}\bigg)\tnm{r_{\l}^{L}(\eta,\cdot)}+ \tnm{S(\eta,\cdot)}.
\end{eqnarray}
From (\ref{Milne temp 37}), (\ref{Milne t 03}), (\ref{Milne t 04})
and (\ref{Milne t 05}), we have
\begin{eqnarray}
\abs{\beta^{L}_{\l}(\eta)}&\leq & 14(\tnm{h}+\tnnm{S}) +
2\pi(1+\l+\abs{F(\eta)})\int_0^{\eta}\tnm{r_{\l}^{L}(y,\cdot)}\ud{y}+\int_0^{\eta}\tnm{S(y,\cdot)}\ud{y} \\
&\leq&
14(\tnm{h}+\tnnm{S})  +
2\pi(1+\l+\abs{F(\eta)})\eta^{1/2}\left(\int_0^{\eta}\tnm{r^{L}_{\l}(y,\cdot)}^2\ud{y}\right)^{1/2}
\nonumber\\
&& +\eta^{1/2} \left(\int_0^{\eta}\tnm{S(y,\cdot)}^2\ud{y}\right)^{1/2}\nonumber\\
&\leq& 16 \pi(1+\l+\abs{F(\eta)})(1+\eta^{1/2})(\tnm{h}+\tnnm{S}).\nonumber\end{eqnarray}
By (\ref{Milne temp 81}) this implies
\begin{eqnarray}
\abs{q_{\l}^{L}(\eta)} \leq
\frac{1}{\pi}\left(\abs{\beta^{L}_{\l}(\eta)}+ \tnm{\sin^2(\cdot)} \tnm{r_{\l}^{L}(\eta,\cdot)}\right),
\end{eqnarray}
which completes the proof of (\ref{Milne temp 38}).\\
\ \\
Step 3: {\emph{ Passing to the limit $\l \rightarrow 0$}}\\
Since estimates (\ref{Milne temp 37}) and (\ref{Milne temp 38}) are
uniform in $\l$, we can take weakly convergent subsequence
$f_{\l}^{L}\rt f^{L}\in L^2([0,L]\times[-\pi,\pi))$ as $\l\rt0$.
That is, there is a function $f^{L} \in L^2([0,L]\times (-\pi,\pi])$, which is the solution of
(\ref{Milne finite problem LT}) and satisfies the estimates (\ref{Milne temp 1}) and
(\ref{Milne temp 2}).\\
\ \\
Step 4: {\emph{Orthogonality relation (\ref{Milne temp 3})}}.\\
A direct integration over $\phi\in[-\pi,\pi)$ in (\ref{Milne finite
problem LT}) implies
\begin{eqnarray}
\frac{\ud{}}{\ud{\eta}}\br{\sin\phi,f^{L}}_{\phi}(\eta)=F\br{\cos\phi,\frac{\ud{f^{L}}}{\ud{\phi}}}_{\phi}(\eta)
+\bar S(\eta)=F\br{\sin\phi,f^{L}}_{\phi}(\eta).
\end{eqnarray}
thanks to $\bar S=0$. The specular reflexive boundary
$f^{L}(L,\phi)=f^{L}(L,R\phi)$ implies
$\br{\sin\phi,f^{L}}_{\phi}(L)=0$. Then we have
\begin{eqnarray}
\br{\sin\phi,f^{L}}_{\phi}(\eta)=0.
\end{eqnarray}
It is easy to see
\begin{eqnarray}
\br{\sin\phi,q^{L}}_{\phi}(\eta)=0.
\end{eqnarray}
Hence, we may derive
\begin{eqnarray}
\br{\sin\phi,r^{L}}_{\phi}(\eta)=0.
\end{eqnarray}
This leads (\ref{Milne temp 3}) and completes the proof of
(\ref{Milne finite LT})
\end{proof}

\subsubsection{Infinite Slab with $\bar S=0$}

We turn to the $\e$-Milne problem in the infinite
slab, that is, we will consider the following problem
\begin{eqnarray}\label{Milne infinite problem LT}
\left\{
\begin{array}{rcll}\displaystyle
\sin\phi\frac{\p f}{\p\eta}-F(\eta)\cos\phi\frac{\p
f}{\p\phi}+f-\bar f&=&S(\eta,\phi),&(\eta,\phi)\in[0,\infty)\times[-\pi,\pi)\\
f(0,\phi)&=&h(\phi)\ \ & \text{for}\ \ \sin\phi>0,\\
\lim_{\eta\rt\infty}f(\eta,\phi)&=&f_{\infty},
\end{array}
\right.
\end{eqnarray}
where $h$ and $S$ satisfy the assumption (\ref{Milne data}).\\

For simplicity, we denote the kinetic part $r$ and the fluid part $q$ for $f$ as well as $r^{L}$ and $q^{L}$ for $f^{L}$.
\begin{lemma}\label{Milne infinite LT}
Assume $\bar S(\eta)=0$ for any $\eta\in[0,\infty)$ with (\ref{Milne
bounded}) and (\ref{Milne decay}). Then there exists a solution
$f(\eta,\phi)$ of the infinite slab problem (\ref{Milne infinite
problem LT}), satisfying
\begin{eqnarray}
\tnnm{r}&\leq&C\bigg(M+\frac{M}{K}\bigg) <\infty\label{Milne temp 8},\\
\br{\sin\phi,r}_{\phi}(\eta)&=&0 ~~\text{for~ any }~\eta\in[0,\infty), \label{Milne temp 19}\\
\abs{q(\eta)}&\leq&C\bigg(1+M+\frac{M}{K}+ \tnm{r(\eta,\cdot)}\bigg).\label{Milne temp 17.}
\end{eqnarray}
 Also, there exists a constant
$q_{\infty}=f_{\infty}\in\r$ such that the following estimates hold,
\begin{eqnarray}
&& \abs{q_{\infty}}\leq C\bigg(1+M+\frac{M}{K}\bigg)<\infty\label{Milne temp 17},\\
&&|q(\eta)-q_{\infty}| \leq C\bigg(\tnm{r(\eta,\cdot)}+\int_{\eta}^{\infty}\abs{F(y)}\tnm{r(y,\cdot)}\ud{y}+\int_{\eta}^{\infty}\lnm{S(y,\cdot)}\ud{y}
\bigg)\label{Milne temp 9},\\
&& \int_0^{\infty}|q(\eta)-q_{\infty}|^2\ud{\eta} \leq C\bigg(M+\frac{M}{K}\bigg)^2<\infty\label{Milne
temp 10}.
\end{eqnarray}
The solution is unique among functions such that (\ref{Milne temp
8}), (\ref{Milne temp 17})and (\ref{Milne temp 10}) hold.
\end{lemma}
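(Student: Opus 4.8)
The plan is to construct the infinite-slab solution as the $L\to\infty$ limit of the finite-slab solutions $f^L=q^L+r^L$ from Lemma \ref{Milne finite LT}, and then to sharpen the finite-slab estimates by exploiting that the limiting profile is stationary at $\eta=\infty$. First I would invoke the $L$-uniform bound (\ref{Milne temp 1}) on $\int_0^L\tnm{r^L(\eta,\cdot)}^2\ud{\eta}$: extending $r^L$ by zero and extracting a weakly convergent subsequence $r^L\rightharpoonup r$ in $L^2([0,\infty)\times[-\pi,\pi))$, weak lower semicontinuity gives (\ref{Milne temp 8}). For the fluid part, (\ref{Milne temp 2}) bounds $q^L$ in $L^2_{\mathrm{loc}}([0,\infty))$ uniformly in $L$, so along a further subsequence $q^L\rightharpoonup q$ locally and $f=q+r$ solves (\ref{Milne infinite problem LT}) in the weak formulation. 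Passing (\ref{Milne temp 3}) to the limit (testing against $\sin\phi\,\mathbf{1}_{[\eta_1,\eta_2]}$) yields (\ref{Milne temp 19}); equivalently, integrating the equation in $\phi$ gives $\frac{\ud{}}{\ud{\eta}}\br{\sin\phi,f}_\phi=F\br{\sin\phi,f}_\phi$, and since $F$ has compact support while $f\to f_\infty$ is constant, this forces $\br{\sin\phi,f}_\phi\equiv0$.

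The crux is the macroscopic identity obtained by testing the equation against $\sin\phi$. Writing $\beta(\eta)=\br{\sin^2\phi,f}_\phi(\eta)$ and repeating the integration by parts of the finite-slab proof with $\l=0$, I obtain $\frac{\ud{\beta}}{\ud{\eta}}=D(\eta)$ where, \emph{crucially}, the orthogonality (\ref{Milne temp 19}) annihilates the $\br{\sin\phi,r}_\phi$ contribution and leaves $D(\eta)=-F(\eta)\br{\cos(2\phi),r}_\phi(\eta)+\br{\sin\phi,S}_\phi(\eta)$. This is exactly the improvement over the finite slab: $D$ no longer carries the undifferentiated $\tnm{r}$ term responsible for the spurious $\eta^{1/2}$ growth in (\ref{Milne temp 2}). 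Since $\int_0^\infty\abs{D}\,\ud{y}<\infty$, $\beta$ converges to a limit $\beta_\infty$; using $\beta(\eta)=\pi q(\eta)+\br{\sin^2\phi,r}_\phi(\eta)$ and a sequence $\eta_n\to\infty$ with $\tnm{r(\eta_n,\cdot)}\to0$, I set $\pi q_\infty:=\beta_\infty=\pi f_\infty$. Integrating $\beta'=D$ on $[\eta,\infty)$ then gives $\pi(q_\infty-q(\eta))=\br{\sin^2\phi,r}_\phi(\eta)+\int_\eta^\infty D(y)\ud{y}$, and estimating $\abs{\br{\sin^2\phi,r}_\phi}\le C\tnm{r}$, $\abs{\br{\cos(2\phi),r}_\phi}\le C\tnm{r}$, $\abs{\br{\sin\phi,S}_\phi}\le C\lnm{S}$ produces (\ref{Milne temp 9}).

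The remaining bounds are then routine. For (\ref{Milne temp 17}) I write $\pi q_\infty=\beta(0)+\int_0^\infty D(y)\ud{y}$: the boundary value $\beta(0)$ is controlled exactly as in the finite slab (via $\beta(0)^2\le\frac{256}{9}\br{f,f\abs{\sin\phi}}_\phi(0)$ and the $\alpha(0)$ computation, with $\tnm{h}\le CM$), while $\int_0^\infty\abs{D}\,\ud{y}\le C\nm{F}_{L^2}\tnnm{r}+C\int_0^\infty\lnm{S(y,\cdot)}\ud{y}\le C(M+M/K)$, using that $F$ is bounded with compact support and that $S$ obeys (\ref{Milne decay}). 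Estimate (\ref{Milne temp 17.}) is then just $\abs{q(\eta)}\le\abs{q_\infty}+\abs{q(\eta)-q_\infty}$ combined with (\ref{Milne temp 9}) and (\ref{Milne temp 8}). For (\ref{Milne temp 10}) I square (\ref{Milne temp 9}) and integrate in $\eta$: the $\tnm{r}$ term is absorbed by (\ref{Milne temp 8}); the $F$-term vanishes once $\eta>3/(4\e)$ because $\mathrm{supp}\,\psi\subset[0,3/4]$, so its $\eta$-integral is finite; and $\int_\eta^\infty\lnm{S}\,\ud{y}\le\frac{M}{K}e^{-K\eta}$ is square-integrable.

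Finally, for uniqueness I take two solutions obeying (\ref{Milne temp 8}), (\ref{Milne temp 17}), (\ref{Milne temp 10}), set $\tilde f=f_1-f_2$ (which solves the homogeneous problem with zero inflow data), multiply by $\tilde f$, integrate in $\phi$, and multiply by $\exp[V]$ to reach $\half\frac{\ud{}}{\ud{\eta}}\big(\exp[V]\br{\sin\phi,\tilde f^2}_\phi\big)+\exp[V]\tnm{\tilde r}^2=0$. Integrating on $[0,\eta_n]$ along a sequence $\eta_n\to\infty$ with $\tnm{\tilde r(\eta_n,\cdot)}\to0$ and $\tilde q(\eta_n)\to\tilde q_\infty$ (which exists by the finiteness in (\ref{Milne temp 8}) and (\ref{Milne temp 10})), the boundary term at infinity vanishes since $\br{\sin\phi,(\tilde q_\infty)^2}_\phi=0$, while the inflow term at $\eta=0$ has the favorable sign; hence $\tilde r\equiv0$, and feeding this back into the equation forces $\tilde q$ constant, then zero by the boundary condition. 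I expect the main obstacle to be the rigorous passage to the limit for $q^L$ and the justification that the boundary contributions at $\eta=\infty$ genuinely vanish --- both resting on extracting good sequences $\eta_n\to\infty$ from the square-integrability estimates (\ref{Milne temp 8}) and (\ref{Milne temp 10}).
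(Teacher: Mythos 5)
Your proposal follows essentially the same route as the paper's proof: weak limits of the finite-slab solutions for existence and for \eqref{Milne temp 8}, \eqref{Milne temp 19}; the macroscopic identity $\frac{\ud{\beta}}{\ud{\eta}}=D$ with the orthogonality relation removing the $\br{\sin\phi,r}_{\phi}$ term, so that $\beta_{\infty}$ exists and $q_{\infty}=\beta_{\infty}/\pi$ yields \eqref{Milne temp 17}--\eqref{Milne temp 10}; and uniqueness via monotonicity of the weighted flux $\ue^{V(\eta)}\br{f',f'\sin\phi}_{\phi}$ along a good sequence $\eta_k\rt\infty$. The only (harmless) deviations are deriving \eqref{Milne temp 17.} from the triangle inequality $\abs{q}\leq\abs{q_{\infty}}+\abs{q-q_{\infty}}$ instead of directly from the bound on $\beta(\eta)$, and the alternative ODE argument for \eqref{Milne temp 19}.
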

\begin{proof} The existence of the solution is obtained by $L \rightarrow \infty$, the estimates (\ref{Milne temp 8})-(\ref{Milne temp 10}) follow from the equation (\ref{Milne infinite problem LT}) immediately.\\
\ \\
Step 1: {\emph{Existence of the solution and estimates (\ref{Milne temp 8}), (\ref{Milne temp 19}) and (\ref{Milne temp 17.})}}\\
By the estimates from Lemma \ref{Milne infinite LT}, the
solution $f^{L}$ of the (\ref{Milne finite problem
LT}) is bounded in $L^2_{loc}([0,\infty);L^2[-\pi,\pi))$. Then there exists a
subsequence, which is also denoted as $f^L$,  such that
\begin{eqnarray}
q^{L} \rightharpoonup q, ~~~~r^{L} \rightharpoonup r,
~~\text{weakly ~in}~ L^2_{loc}([0,\infty);L^2[-\pi,\pi)).
\end{eqnarray}
The limit function $f=q+r$
satisfies the equation and the boundary condition at $\eta=0$ in the weak sense. This shows the
existence of the solution.

Then property (\ref{Milne temp 8})
naturally holds due to the weak lower semi-continuity of norm
$\tnnm{\cdot}$.
The orthogonal relation (\ref{Milne temp 19}) is also preserved.

For the estimate of (\ref{Milne temp 17.}), we need the  facts that $F\in L^1[0,\infty)\cap L^2[0,\infty)$, $r\in
L^2([0,\infty)\times[-\pi,\pi))$ and $S$ exponentially decays at the far field, corresponding to
(\ref{Milne t 05}), (\ref{Milne temp 8}) and (\ref{Milne decay}).
We use the notation in Step 5 of the proof of Lemma
\ref{Milne finite LT}. Recall (\ref{Milne temp 81}) to (\ref{Milne finite problem LT penalty.}) with $\l=0$ and $L=\infty$, we have
\begin{eqnarray}\label{Milne t 21}
\frac{\ud{\beta}}{\ud{\eta}}=D(\eta,\phi),
\end{eqnarray}
where
\begin{eqnarray}
\beta(\eta)=\br{\sin^2\phi,f}_{\phi}(\eta)
\end{eqnarray}
and
\begin{eqnarray}
D(\eta,\phi)=-\br{\sin\phi,r}_{\phi}-F(\eta)\br{\cos(2\phi),r}_{\phi}+\br{\sin\phi,S}_{\phi}(\eta).
\end{eqnarray}
The orthogonal relation (\ref{Milne temp 19}) implies
\begin{eqnarray}
D(\eta,\phi)=-F(\eta)\br{\cos(2\phi),r}_{\phi}+\br{\sin\phi,S}_{\phi}(\eta).
\end{eqnarray}
Hence, we can integrate (\ref{Milne t 21}) over $[0,\eta]$ to show
\begin{eqnarray}\label{Expression of beta}
\beta(\eta)-\beta(0)=-\int_0^{\eta}F(y)\br{\cos(2\phi),r}_{\phi}(y)\ud{y}+\int_0^{\eta}\br{\sin\phi,S}_{\phi}(y)\ud{y}.
\end{eqnarray}
Similar to (\ref{Milne t 05}), one has
\begin{eqnarray}
|\beta(0)| \leq 14(\tnm{h}+\tnnm{S} ).
\end{eqnarray}
So, it derives to
\begin{eqnarray} \label{Milne te 35}
|\beta(\eta)|&\leq & |\beta(0)| + \int_0^{\eta}|F(y)\br{\cos(2\phi),r}_{\phi}(y)|\ud{y}+\int_0^{\eta}|\br{\sin\phi,S}_{\phi}(y)|\ud{y}\\
&\leq & 14(\tnm{h}+\tnnm{S} )+ \bigg(\int_0^{\eta}|F(y)|^2\ud{y}\bigg)^{1/2} \bigg(\int_0^{\eta}
\tnm{r(y,\cdot)}^2\ud{y}\bigg)^{1/2} \nonumber\\
&& + \pi \bigg(\int_0^{\eta}
\tnm{S(y,\cdot)}\ud{y}\bigg)^{1/2}. \nonumber
\\
&\leq & C(1+M+\frac{M}{K}).\nonumber
\end{eqnarray}
Note that
\begin{eqnarray} \label{Milne Te 37}
\beta(\eta)=\br{\sin^2\phi,f}_{\phi}(\eta)=\br{\sin^2\phi,q}_{\phi}(\eta)+\br{\sin^2\phi,r}_{\phi}(\eta)
=q(\eta)\tnm{\sin\phi}^2+\br{\sin^2\phi,r}_{\phi}(\eta).
\end{eqnarray}
The inequality (\ref{Milne temp 17.}) is valid from (\ref{Milne temp 8}) and the fact (\ref{Milne decay}).\\
\ \\
Step 2: {\emph{ Estimates (\ref{Milne temp 17}), (\ref{Milne temp 9}) and (\ref{Milne temp 10})}}
From (\ref{Expression of beta}) together with  the properties of $F$ and $S$, the limit  of $\beta(\eta)$ exists. Set $\beta_{\infty}=\lim_{\eta\rt\infty}\beta(\eta)$, from (\ref{Milne te 35}), we know that
\begin{eqnarray}\label{Milne te 36}
 |\beta_{\infty}| \leq C(1+M+\frac{M}{K}).
\end{eqnarray}
Define the constant as $q_{\infty}=\beta_{\infty}/\tnm{\sin\phi}^2=\beta_{\infty}/\pi$, then (\ref{Milne temp 17}) follows directly from (\ref{Milne te 36}).
 Moreover,
\begin{eqnarray}
\beta_{\infty}-\beta(\eta)=\int_{\eta}^{\infty}D(y)\ud{y}=\int_{\eta}^{\infty}F(y)\br{\cos(2\phi),r}_{\phi}(y)\ud{y}+
\int_{\eta}^{\infty}\br{\sin\phi,S}_{\phi}(y)\ud{y}.
\end{eqnarray}
Thus, (\ref{Milne Te 37}) yields
\begin{eqnarray}\label{Milne t 22}
&& \pi |q(\eta)-q_{\infty}|
= \abs{\beta(\eta)-\beta_{\infty}-\br{\sin^2\phi,r}_{\phi}(\eta)}\\
&&\hspace{5mm}\leq
\abs{\br{\sin^2\phi,r}_{\phi}(\eta)}+\int_{\eta}^{\infty}\abs{F(y)\br{\cos(2\phi),r}_{\phi}(y)}\ud{y}
+\int_{\eta}^{\infty}\abs{\br{\sin\phi,S}_{\phi}(y)}\ud{y}\nonumber\\
&& \hspace{5mm}\leq \pi \tnm{r(\eta,\cdot)}+\sqrt{\pi}\int_{\eta}^{\infty}\abs{F(y)}\tnm{r(y,\cdot)}\ud{y}+
\sqrt{\pi}\int_{\eta}^{\infty}\tnm{S(y,\cdot)}\ud{y}\nonumber.
\end{eqnarray}
This implies (\ref{Milne temp 9}). Furthermore, we integrate
(\ref{Milne t 22}) over $\eta\in[0,\infty)$. The Cauchy's inequality implies
\begin{eqnarray}
&&\int_0^{\infty}\bigg(\int_{\eta}^{\infty}\abs{F(y)}\tnm{r(y,\cdot)}\ud{y}\bigg)^2\ud{\eta}\leq
\tnnm{r}^2\int_0^{\infty}\int_{\eta}^{\infty}\abs{F(y)}^2\ud{y}\ud{\eta}\leq
C.
\end{eqnarray}
The exponential decay of $S$ shows that
\begin{eqnarray}
\int_0^{\infty}\bigg(\int_{\eta}^{\infty}\tnm{S(y,\cdot)}\ud{y}\bigg)^2\ud{\eta}\leq
C.
\end{eqnarray}
Hence, the estimate (\ref{Milne temp
10}) naturally follows.\\
\ \\
Step 3: {\emph{ Uniqueness}}\\
In order to show the uniqueness of the solution, we assume there are
two solutions $f_1$ and $f_2$ to the equation (\ref{Milne infinite
problem LT}) satisfying (\ref{Milne temp 8}) and (\ref{Milne temp
19}). Then $f'=f_1-f_2$ satisfies the equation
\begin{eqnarray}\label{uniqueness equation}
\left\{
\begin{array}{rcl}\displaystyle
\sin\phi\frac{\p f'}{\p\eta}-F(\eta)\cos\phi\frac{\p
f'}{\p\phi}+f'-\bar f'&=&0,\\
f'(0,\phi)&=&0\ \ \text{for}\ \ \sin\phi>0,\\
\lim_{\eta\rt\infty}f'(\eta,\phi)&=&f'_{\infty}.
\end{array}
\right.
\end{eqnarray}
Similarly, we can define $r'$ and $q'$. Multiplying
$\ue^{V(\eta)}f'$ on both sides of (\ref{uniqueness equation}) and
integrating over $\phi\in[-\pi,\pi)$ yields
\begin{eqnarray}\label{uniqueness temp}
&&\half\frac{\ud{}}{\ud{\eta}}\bigg(\br{
f',f'\sin\phi}_{\phi}(\eta)\ue^{V(\eta)}\bigg)\\
&=&\bigg(\br{
f',\frac{\ud{f'}}{\ud{\eta}}\sin\phi}_{\phi}(\eta)\ue^{V(\eta)}\bigg)-\half\bigg(F(\eta)\br{
f',f'\sin\phi}_{\phi}(\eta)\ue^{V(\eta)}\bigg)\nonumber\\
&=&\bigg(\br{
f',\frac{\ud{f'}}{\ud{\eta}}\sin\phi}_{\phi}(\eta)\ue^{V(\eta)}\bigg)-\bigg(F(\eta)\br{
f',\frac{\ud{f'}}{\ud{\phi}}\cos\phi}_{\phi}(\eta)\ue^{V(\eta)}\bigg)\nonumber\\
&=&   \br{
f'-\bar{f'}, f'}_{\phi}(\eta)\ue^{V(\eta)}= -\bigg(\tnm{r'(\eta,\cdot)}^2\ue^{V(\eta)}\bigg)\leq0. \nonumber.
\end{eqnarray}
This implies that $
\gamma(\eta)=\half\br{f',f'\sin\phi}_{\phi}(\eta)\ue^{V(\eta)}
$
is decreasing. Since $r'\in L^2([0,\infty)\times[-\pi,\pi))$ and
$q'-q'_{\infty}\in L^2([0,\infty)\times[-\pi,\pi))$, there exists a
subsequence $\eta_k $ such that $\tnm{r'(\eta_k,\cdot)}\rt0$ and $q'(\eta_k)-q'_{\infty}\rt0$ as $k \rt \infty$.
Hence, this implies
\begin{eqnarray}
\half\br{r',r'\sin\phi}_{\phi}(\eta_k)\ue^{V(\eta_k)}\rt0,~~~\quad k \rt \infty.
\end{eqnarray}
Also, due to the fact that $q'(\eta_k)$ is independent of $\phi$ and
it is finite dimension with respect to $\phi$, we have
\begin{eqnarray}
\gamma(\eta_k)\rt 0,~~\quad~~k \rt \infty.
\end{eqnarray}
By the monotonicity, $\gamma(\eta)$ decreases to zero and
$\gamma(\eta)\geq0$. Then we can integrate (\ref{uniqueness temp})
over $\eta\in[0,\infty)$ to obtain
\begin{eqnarray}
\gamma(\infty)-\gamma(0)=-2\int_0^{\infty}\tnm{r'(y,\cdot)}^2\ue^{V(y)}\ud{y},
\end{eqnarray}
which implies
\begin{eqnarray}
\gamma(0)=\br{f',f'\sin\phi}_{\phi}(0)\ue^{V(0)} = 2\int_0^{\infty}\tnm{r'(y,\cdot)}^2e^{V(y)}\ud{y}.
\end{eqnarray}
Also, we know
\begin{eqnarray}
0\leq\half\br{f',f'\sin\phi}_{\phi}(0)\ue^{-V(0)} = \half\br{
f',f'\sin\phi}_{\phi}(0)\leq\int_{\sin\phi>0}
(f'(0,\phi))^2\sin\phi\ud{\phi}=0.
\end{eqnarray}
Naturally, we have
\begin{eqnarray}
\br{f',f'\sin\phi}_{\phi}(0)\ue^{V(0)}=2\int_0^{\infty}\tnm{r'(y,\cdot)}^2\ue^{V(y)}\ud{y}=0.
\end{eqnarray}
Hence, we have $r'=0$ and $f'(0,\phi)=0$. Thus, $f'(\eta,\phi)=q'(\eta)$. Plugging this
into the equation (\ref{uniqueness equation}) reveals $\px q'=0$.
Therefore, $f'(\eta,\phi)=C$ for all $(\eta,\phi)\in[0,\infty)\times [-\pi,\pi]$. Naturally the
boundary data leads to $C=0$, which derives to $f'=0$. That is, $f_1=f_2$ and the uniqueness of the solution to
(\ref{uniqueness equation}) follows directly.
\end{proof}

\subsubsection{$\bar S\neq0$ Case}

Consider the $\e$-Milne problem for $f(\eta,\phi)$ in
$(\eta,\phi)\in[0,\infty)\times[-\pi,\pi)$ with a general source
term
\begin{eqnarray}\label{Milne remark problem}
\left\{
\begin{array}{rcl}\displaystyle
\sin\phi\frac{\p f}{\p\eta}-F(\eta)\cos\phi\frac{\p
f}{\p\phi}+f-\bar f&=&S(\eta,\phi),\\
f(0,\phi)&=&h(\phi)\ \ \text{for}\ \ \sin\phi>0,\\
\lim_{\eta\rt\infty}f(\eta,\phi)&=&f_{\infty}.
\end{array}
\right.
\end{eqnarray}
\begin{lemma}\label{Milne infinite LT general}
Assume (\ref{Milne bounded}) and (\ref{Milne decay}) hold. Then
there exists a unique solution $f(\eta,\phi)$ of the problem (\ref{Milne
remark problem}), satisfying
\begin{eqnarray}
\tnnm{r}&<&C\bigg(1+M+\frac{M}{K}\bigg)\leq\infty\label{Milne temp 39},\\
\br{\sin\phi,r}_{\phi}(\eta)&=&-\int_{\eta}^{\infty}\ue^{V(y)-V(\eta)}\bar
S(y)\ud{y},\label{Milne temp 40}\\
\abs{q(\eta)}&\leq&C\bigg(1+M+\frac{M}{K}+\tnm{r(\eta,\cdot)}\bigg)\label{Milne temp 41.}.
\end{eqnarray}
Also there exists a constant $q_{\infty}=f_{\infty}\in\r$ such that
the following estimates hold,
\begin{eqnarray}
\abs{q_{\infty}}&\leq&C\bigg(1+M+\frac{M}{K}\bigg)<\infty\label{Milne temp 41},\\
|q(\eta)-q_{\infty}| &\leq&C\bigg(\tnm{r(\eta,\cdot)}+\int_{\eta}^{\infty}\abs{F(y)}\tnm{r(y,\cdot)}\ud{y}+\int_{\eta}^{\infty}\tnm{S(y,\cdot)}\ud{y}
\bigg)\label{Milne temp 42},\\
\tnm{q(\cdot)-q_{\infty}}&\leq&C\bigg(1+M+\frac{M}{K}\bigg)<\infty\label{Milne
temp 43}.
\end{eqnarray}
The solution is unique among functions satisfying
$\tnnm{f-f_{\infty}}<\infty$.
\end{lemma}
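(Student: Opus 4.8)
The plan is to reduce the general source problem (\ref{Milne remark problem}) to the zero--average case already handled in Lemma \ref{Milne infinite LT} by subtracting an explicit correction that carries all of $\bar S$. The guiding computation is the conservation law obtained by integrating (\ref{Milne remark problem}) in $\phi$: after the integration by parts $\int_{-\pi}^{\pi}\cos\phi\,\p_\phi f\,\ud{\phi}=\br{\sin\phi,f}_{\phi}$ (the boundary terms cancel by periodicity) and using $\overline{f-\bar f}=0$, one finds $\frac{\ud{}}{\ud{\eta}}\br{\sin\phi,f}_{\phi}-F(\eta)\br{\sin\phi,f}_{\phi}=\int_{-\pi}^{\pi}S\,\ud{\phi}$. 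Motivated by this, I would define, using only the data $S$, the explicit function $\mathcal{G}(\eta)=-\int_{\eta}^{\infty}\ue^{V(y)-V(\eta)}\big(\int_{-\pi}^{\pi}S(y,\phi')\,\ud{\phi'}\big)\,\ud{y}$, where $V$ is the potential from (\ref{Potential Function1}). Since $V'=-F$, this $\mathcal{G}$ satisfies $\mathcal{G}'-F\mathcal{G}=\int_{-\pi}^{\pi}S\,\ud{\phi}$ with $\mathcal{G}(\infty)=0$, and the uniform bound $1\le\ue^{V}\le4$ from (\ref{Property of Potential fuction}) together with (\ref{Milne decay}) gives $\abs{\mathcal{G}(\eta)}\ls (M/K)\ue^{-K\eta}$, uniformly in $\e$.

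Next I would introduce the correction $c(\eta,\phi)=\tfrac1\pi\,\mathcal{G}(\eta)\sin\phi$, so that $\br{\sin\phi,c}_{\phi}=\mathcal{G}$ and $\bar c=0$. Applying the $\e$-Milne operator to $c$ produces a source $S_c=\tfrac1\pi\mathcal{G}'\sin^2\phi-\tfrac{F}{\pi}\mathcal{G}\cos^2\phi+\tfrac1\pi\mathcal{G}\sin\phi$ whose $\phi$-average is $\overline{S_c}=\tfrac1{2\pi}(\mathcal{G}'-F\mathcal{G})=\bar S$, precisely because of the ODE satisfied by $\mathcal{G}$. Thus $\tilde f:=f-c$ should solve an $\e$-Milne problem of the same form (\ref{Milne infinite problem LT}) with the \emph{zero-average} source $\tilde S:=S-S_c$ and modified boundary data $\tilde h(\phi)=h(\phi)-\tfrac1\pi\mathcal{G}(0)\sin\phi$. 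Both retain the structural bounds (\ref{Milne bounded})--(\ref{Milne decay}): $\abs{\tilde h}\le M+CM/K$, and $\tilde S$ decays like $\ue^{-K\eta}$ uniformly in $\e$ since $\mathcal{G},\mathcal{G}'$ decay exponentially and $\abs F$ is bounded uniformly in $\e$.

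I would then invoke Lemma \ref{Milne infinite LT} for $\tilde f$ to get existence, the orthogonality $\br{\sin\phi,r_{\tilde f}}_{\phi}=0$, and all its quantitative estimates. Because $c$ is purely kinetic and $c\to0$ as $\eta\to\infty$, the decomposition $f=\tilde f+c$ gives $q_f=q_{\tilde f}$, $r_f=r_{\tilde f}+c$, and $q_{f,\infty}=f_\infty$; then (\ref{Milne temp 40}) follows from $\br{\sin\phi,r_f}_{\phi}=\br{\sin\phi,c}_{\phi}=\mathcal{G}$, while (\ref{Milne temp 39}), (\ref{Milne temp 41.}) and (\ref{Milne temp 41})--(\ref{Milne temp 43}) follow by adding the explicit, exponentially small contributions $\tnnm{c}\ls M/K$ and $\tnm{c(\eta,\cdot)}\ls (M/K)\ue^{-K\eta}$ to the corresponding estimates for $\tilde f$. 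For uniqueness, the difference of two solutions with $\tnnm{f-f_\infty}<\infty$ solves the homogeneous problem with $S\equiv0$ (hence $\bar S=0$) and $h\equiv0$, so it vanishes by the uniqueness part of Lemma \ref{Milne infinite LT}.

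The step I expect to be the main obstacle is the second one: the correction must be chosen so that the $\phi$-average of its induced source equals $\bar S$ exactly, which hinges on the integrating factor $\ue^{V}$ and the flux ODE for $\mathcal{G}$, and one must verify that the residual source $\tilde S$ and data $\tilde h$ keep exponential decay and boundedness with constants independent of $\e$. This $\e$-uniformity is exactly where the potential estimate $1\le\ue^{V}\le4$ from (\ref{Property of Potential fuction}) is indispensable; once it is in place, the remaining work is bookkeeping that transfers the estimates of Lemma \ref{Milne infinite LT} through the explicit correction $c$.
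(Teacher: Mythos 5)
Your proposal is correct and follows essentially the same route as the paper: the correction $c=\tfrac1\pi\mathcal{G}(\eta)\sin\phi$ is exactly the paper's auxiliary function $f^{2}=a(\eta)\sin\phi$ (with $\mathcal{G}=\pi a$), chosen via the same flux ODE with integrating factor $\ue^{V}$ and $a(\infty)=0$, after which the zero-average Lemma \ref{Milne infinite LT} is invoked. The only difference is organizational — you absorb the paper's two zero-average sub-problems $f^{1}$ and $f^{3}$ into a single application of that lemma with data $h-a(0)\sin\phi$ and source $S-\ll c$ — which changes nothing of substance.
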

\begin{proof}
We can apply superposition property for this linear problem, i.e.
write $S=\bar S+(S-\bar S)=S_Q+S_R$. Then we solve the problem by
the following steps. 
\\
\ \\
Step 1: {\emph{ Construction of auxiliary function $f^1$}}\\
For the zero mean part $S_R$, we choose $f^1$ as the solution to
\begin{eqnarray}
\left\{
\begin{array}{rcl}\displaystyle
\sin\phi\frac{\p f^1}{\p\eta}-F(\eta)\cos\phi\frac{\p
f^1}{\p\phi}+f^1-\bar f^1&=&S_R(\eta,\phi),\\
f^1(0,\phi)&=&h(\phi)\ \ \text{for}\ \ \sin\phi>0,\\
\lim_{\eta\rt\infty}f^1(\eta,\phi)&=&f_{\infty}^1.
\end{array}
\right.
\end{eqnarray}
Since $\bar S_R=0$, by Lemma \ref{Milne infinite LT}, we know there
exists a unique solution $f^1$
satisfying the $L^2$ estimates (\ref{Milne temp 39}),
(\ref{Milne temp 41}), (\ref{Milne temp 42}) and (\ref{Milne temp 43}).\\
\ \\
Step 2: {\emph{Construction of auxiliary function $f^2$}}\\
For the part $S_Q$, We seek a function $f^{2}$ satisfying
\begin{eqnarray}\label{Milne temp 83}
-\frac{1}{2\pi}\int_{-\pi}^{\pi}\bigg(\sin\phi\frac{\p
f^{2}}{\p\eta}-F(\eta)\cos\phi\frac{\p
f^{2}}{\p\phi}\bigg)\ud{\phi}+S_Q=0.
\end{eqnarray}
An integration by parts transforms the equation (\ref{Milne
temp 83}) into
\begin{eqnarray}\label{Milne t 07}
-\int_{-\pi}^{\pi}\sin\phi\frac{\p
f^{2}}{\p\eta}\ud{\phi}+\int_{-\pi}^{\pi}F(\eta)\sin\phi
f^{2}\ud{\phi}+2\pi S_Q=0.
\end{eqnarray}
By Setting
\begin{eqnarray}
f^{2}(\phi,\eta)=a(\eta)\sin\phi.
\end{eqnarray}
and plugging this ansatz into (\ref{Milne t 07}), we have
\begin{eqnarray}
-\frac{\ud{a}}{\ud{\eta}}\int_{-\pi}^{\pi}\sin^2\phi\ud{\phi}+F(\eta)a(\eta)\int_{-\pi}^{\pi}\sin^2\phi\ud{\phi}+2\pi
S_Q=0.
\end{eqnarray}
Hence, we have
\begin{eqnarray}
-\frac{\ud{a}}{\ud{\eta}}+F(\eta)a(\eta)+2S_Q=0.
\end{eqnarray}
By assume $a(\infty)=0$, we can directly solve it to
obtain
\begin{eqnarray}
a(\eta)=-\ue^{\int_0^{\eta}F(y)\ud{y}}\int_{\eta}^{\infty}\ue^{-\int_0^yF(z)\ud{z}}2S_Q(y)\ud{y}.
\end{eqnarray}
In particular, for $\eta=0$, we have
\begin{eqnarray}
a(0)=-\int_0^{\infty}\ue^{-\int_0^yF(z)\ud{z}}2S_Q(y)\ud{y}.
\end{eqnarray}
Based on the exponential decay of $S_Q$, we can directly verify
$a(\eta)$ decays exponentially to zero as $\eta\rt\infty$ and $f^2$
satisfies the estimates (\ref{Milne temp 39}),
(\ref{Milne temp 41}), (\ref{Milne temp 42}) and (\ref{Milne temp 43}).\\
\ \\
Step 3: {\emph{Construction of auxiliary function $f^3$}}\\
Since the boundary condition has been changed, we construct $f^3$ verify
\begin{eqnarray}
\left\{
\begin{array}{rcl}\displaystyle
\sin\phi\frac{\p f^3}{\p\eta}-F(\eta)\cos\phi\frac{\p
f^3}{\p\phi}+f^3-\bar f^3&=&-\sin\phi\dfrac{\p
f^{2}}{\p\eta}+F(\eta)\cos\phi\dfrac{\p
f^{2}}{\p\phi}-f^{2}+\bar f^{2}+S_Q,\\
f^3(0,\phi)&=&-a(0)\sin\phi\ \ \text{for}\ \ \sin\phi>0,\\
\lim_{\eta\rt\infty}f^3(\eta,\phi)&=&f_{\infty}^3.
\end{array}
\right.
\end{eqnarray}
Since the source term satisfy
\begin{eqnarray}\label{Milne temp 84}
\int_{-\pi}^{\pi}\bigg(-\sin\phi\frac{\p
f^{2}}{\p\eta}+F(\eta)\cos\phi\frac{\p f^{2}}{\p\phi}-f^{2}+\bar
f^{2}+S_Q\bigg)\ud{\phi}=0,
\end{eqnarray}
we can apply Lemma \ref{Milne infinite LT}
to obtain a unique solution $f^3$
satisfying the estimates (\ref{Milne temp 39}),
(\ref{Milne temp 41}), (\ref{Milne temp 42}) and (\ref{Milne temp 43}).\\
\ \\
Step 4: {\emph{Construction of auxiliary function $f^4$}}\\
We now define $f^4=f^2+f^3$ and an explicit verification shows
\begin{eqnarray}
\left\{
\begin{array}{rcl}\displaystyle
\sin\phi\frac{\p f^4}{\p\eta}-F(\eta)\cos\phi\frac{\p
f^4}{\p\phi}+f^4-\bar f^4&=&S_Q(\eta,\phi),\\
f^4(0,\phi)&=&0\ \ \text{for}\ \ \sin\phi>0,\\
\lim_{\eta\rt\infty}f^4(\eta,\phi)&=&f_{\infty}^4,
\end{array}
\right.
\end{eqnarray}
and $f^4$
satisfies the $L^2$ estimates (\ref{Milne temp 39}),
(\ref{Milne temp 41}), (\ref{Milne temp 42}) and (\ref{Milne temp 43}).\\
\ \\
In summary, we deduce that $f^1+f^4$ is the solution of (\ref{Milne
remark problem}) and satisfies the estimates (\ref{Milne temp 39}),
(\ref{Milne temp 41}), (\ref{Milne temp 42}) and (\ref{Milne temp 43}). A direct
computation of $\br{\sin\phi,f^i}_{\phi}(\eta)$ for $i=1,2,3,4$
leads to (\ref{Milne temp 40}). From $\tnnm{f-f_{\infty}}<\infty$ ,
we deduce $\tnm{\bar f(\cdot)-f_{\infty}}<\infty$. Set $f_{\infty}=f^1_{\infty}+f^4_{\infty}$, a similar argument
in Lemma \ref{Milne infinite LT} shows that the uniqueness of solution.
\end{proof}
Combining all above, we have the following theorem.
\begin{theorem}\label{Milne lemma 6}
Assume (\ref{Milne bounded}) and (\ref{Milne decay}) hold, there exists a
unique solution $f(\eta,\phi)$ for the $\e$-Milne problem (\ref{Milne problem}), which satisfies the estimates
\begin{eqnarray}
\tnnm{f-f_{\infty}}\leq C\bigg(1+M+\frac{M}{K}\bigg)<\infty,
\end{eqnarray}
for some real number $f_{\infty}$ such that
\begin{eqnarray}
\abs{f_{\infty}}\leq C\bigg(1+M+\frac{M}{K}\bigg)<\infty.
\end{eqnarray}
\end{theorem}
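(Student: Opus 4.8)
The plan is to recognize that the $\e$-Milne problem (\ref{Milne problem}) is exactly the general-source problem (\ref{Milne remark problem}) already analyzed in Lemma \ref{Milne infinite LT general}, so the theorem is a consolidation of the preceding $L^2$ theory rather than a fresh estimate. First I would check that the standing hypotheses (\ref{Milne bounded}) and (\ref{Milne decay}) on $h$ and $S$ are precisely those required by Lemma \ref{Milne infinite LT general}; invoking that lemma then yields a unique solution $f$ in the class $\{\tnnm{f-f_{\infty}}<\infty\}$, together with the orthogonal decomposition $f=q+r$ into its hydrodynamic part $q(\eta)$ and microscopic part $r(\eta,\phi)$, and the full suite of estimates (\ref{Milne temp 39})--(\ref{Milne temp 43}).

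Next I would assemble the asserted bound on $\tnnm{f-f_{\infty}}$. Since the far-field limit is a pure constant, the microscopic part carries no nonzero limit and $f_{\infty}=q_{\infty}$, so that $f-f_{\infty}=r+(q-q_{\infty})$. By the triangle inequality, $\tnnm{f-f_{\infty}}\leq\tnnm{r}+\tnnm{q-q_{\infty}}$. The first term is controlled directly by (\ref{Milne temp 39}). For the second, because $q-q_{\infty}$ depends on $\eta$ alone, the mixed norm collapses to $\tnnm{q-q_{\infty}}=\sqrt{2\pi}\,\tnm{q(\cdot)-q_{\infty}}$, which is bounded by (\ref{Milne temp 43}); adding the two contributions gives the claimed $C(1+M+M/K)$.

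Finally, the bound $\abs{f_{\infty}}=\abs{q_{\infty}}\leq C(1+M+M/K)$ is exactly the estimate (\ref{Milne temp 41}), and uniqueness within the stated class is inherited verbatim from Lemma \ref{Milne infinite LT general}. I do not expect a genuine obstacle here: all the analytic weight — the finite-slab energy identity weighted by $\exp[V(\eta)]$, the contraction/penalization argument, the passage $L\rt\infty$, and the superposition handling the nonzero mean $\bar S$ — has already been discharged in Lemmas \ref{Milne finite LT}, \ref{Milne infinite LT} and \ref{Milne infinite LT general}. The only point deserving explicit mention is the bookkeeping that reconciles the $L^2L^2$ norm $\tnnm{\cdot}$ with the pure $\eta$-integral in (\ref{Milne temp 43}), which is the harmless factor $\sqrt{2\pi}$ noted above.
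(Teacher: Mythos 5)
Your proposal is correct and follows essentially the same route as the paper, which proves Theorem \ref{Milne lemma 6} simply by ``combining all above,'' i.e.\ by invoking Lemma \ref{Milne infinite LT general} and assembling $\tnnm{f-f_{\infty}}\leq\tnnm{r}+\tnnm{q-q_{\infty}}$ from (\ref{Milne temp 39}) and (\ref{Milne temp 43}), with $\abs{f_{\infty}}$ controlled by (\ref{Milne temp 41}). Your explicit bookkeeping of the $\sqrt{2\pi}$ factor relating $\tnnm{q-q_{\infty}}$ to the pure $\eta$-integral is exactly the detail the paper leaves implicit.
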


\subsection{$L^{\infty}$ Estimates}

For the analysis of the $\e$-Milne problem, we need the estimate of $\lnnm{f}$. So we
will consider the $L^{\infty}$  estimates in the following subsection.

\subsubsection{Finite Slab}
We firstly consider the penalty $\e$-transport problem in a finite slab $ (\eta,\phi)\in[0,L]\times[-\pi,\pi)$
\begin{eqnarray}\label{Milne finite problem LI}
\left\{
\begin{array}{rcll}\displaystyle
\l f^L_{\l}+ \sin\phi\frac{\p f^L_{\l}}{\p\eta}-F(\eta)\cos\phi\frac{\p
f^L_{\l}}{\p\phi}+f^L_{\l}&=&H(\eta,\phi),\\
f^L_{\l}(0,\phi)&=&h(\phi),\ \ &\text{for}\ \ \sin\phi>0,\\
f^L_{\l}(L,R\phi)&=&f^L_{\l}(L,\phi)
\end{array}
\right.
\end{eqnarray}
with $R \phi=-\phi$. We have the following result.
\begin{lemma}\label{Milne infinite LT generalA}
Assume $\lnnm{H}<\infty$ and $\lnm{h}<\infty$, then there
exists a solution $f_{\l}^L(\eta,\phi)$ to the penalized
$\e$-transport equation (\ref{Milne finite problem LI}) satisfying
\begin{eqnarray}\label{Milne t 01}
\lnnm{f_{\l}^L}\leq \lnm{h}+ \lnnm{H}.
\end{eqnarray}
\end{lemma}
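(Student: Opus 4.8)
The plan is to solve (\ref{Milne finite problem LI}) by the method of characteristics and to read the bound directly off the Duhamel representation, exploiting that the absence of any $\bar f$ coupling turns the penalized equation into a pure damped transport equation. Writing the transport operator as $\mathcal{T}=\sin\phi\,\p_{\eta}-F(\eta)\cos\phi\,\p_{\phi}$, equation (\ref{Milne finite problem LI}) reads $\mathcal{T}f^{L}_{\l}+(1+\l)f^{L}_{\l}=H$. The associated characteristic system is $\dot\eta=\sin\phi$, $\dot\phi=-F(\eta)\cos\phi$, and a direct computation using $V'=-F$ shows that $\cos\phi\,e^{V(\eta)}$ is conserved along every characteristic; in particular $\cos\phi$ keeps a fixed sign and, since $1\le e^{V}\le 4$ by (\ref{Property of Potential fuction}), stays in a controlled range. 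This is exactly the (non-convex) $f_-$ geometry depicted in Figure \ref{fig 2}.

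First I would set up the backward flow. For almost every interior point $(\eta,\phi)$ I trace the characteristic backward; because the field $(\sin\phi,-F\cos\phi)$ is Lipschitz on the compact strip, the trajectory is unique and globally defined, and the specular condition $f^{L}_{\l}(L,\phi)=f^{L}_{\l}(L,-\phi)$ is incorporated by gluing trajectories through the involution $\phi\mapsto-\phi$ at $\eta=L$, across which $f^{L}_{\l}$ is continuous. Each backward trajectory either reaches the inflow boundary $\{\eta=0,\ \sin\phi>0\}$ after a finite travel time, or remains in the slab for all backward time. Along the trajectory the PDE reduces to the scalar ODE $\tfrac{d}{ds}f^{L}_{\l}+(1+\l)f^{L}_{\l}=H$, which I integrate to obtain the mild formula
\[
f^{L}_{\l}(\eta,\phi)=f^{L}_{\l}\big(X(-t_b)\big)\,e^{-(1+\l)t_b}+\int_{-t_b}^{0}e^{(1+\l)\tau}H\big(X(\tau)\big)\,\ud{\tau},
\]
where $X(\cdot)$ is the trajectory with $X(0)=(\eta,\phi)$ and $t_b\in(0,\infty]$ is the backward exit time; at a genuine inflow contact the leading value is $h$, while for a trapped trajectory one takes $t_b=\infty$ and drops the boundary term, the improper integral converging because $1+\l\ge 1$.

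The $L^{\infty}$ bound is then immediate and uniform in $\l$: since $e^{-(1+\l)t_b}\le 1$, $e^{(1+\l)\tau}\le 1$ for $\tau\le 0$, and $\int_{-t_b}^{0}e^{(1+\l)\tau}\ud{\tau}=\frac{1-e^{-(1+\l)t_b}}{1+\l}\le 1$, the first term is bounded by $\lnm{h}$ and the second by $\lnnm{H}$, giving $\lnnm{f^{L}_{\l}}\le\lnm{h}+\lnnm{H}$, which is (\ref{Milne t 01}). The same formula defines a function that one checks solves (\ref{Milne finite problem LI}) in the mild sense, establishing existence; alternatively, existence can be obtained by observing that $(1+\l)I+\mathcal{T}$ with the specular boundary condition is accretive, so the penalized problem is solvable, and the characteristic formula then supplies the bound.

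I expect the main obstacle to be the rigorous treatment of the characteristic flow rather than the estimate itself: namely, verifying that the backward trajectories are well-defined and measurable up to the grazing set $\{\sin\phi=0\}$ (where $\eta$ is momentarily stationary), that the gluing at the reflecting boundary $\eta=L$ is consistent, and that the possibly trapped, non-boundary-connecting trajectories are handled correctly. This is precisely the delicate non-convex characteristic geometry flagged in the introduction; fortunately, because the damping coefficient satisfies $1+\l\ge 1$, the bound $\lnm{h}+\lnnm{H}$ survives any number of reflections and even the trapped case, so the geometry affects only the well-definedness of the construction and not the value of the constant.
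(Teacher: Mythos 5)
Your proposal is correct and follows essentially the same route as the paper: integrate the damped transport equation along the characteristics of the conserved energy $E=\cos\phi\,e^{V(\eta)}$, glue trajectories through the specular reflection at $\eta=L$, and read the bound off the Duhamel weight $\int e^{(1+\l)\tau}\ud{\tau}\le 1$. Your treatment of the trapped characteristics (those with $\abs{E}\ge 1$, which turn at $\eta^{+}$ and never reach the inflow boundary) by integrating backward to $t_b=\infty$ along the periodic orbit is exactly the resummed form of the paper's Cases II--III, where $f^{L}_{\l}(L,\phi_L)$ is solved algebraically from the reflection condition with denominator $\exp(G^{\l}_{L,\eta^+})-\exp(-G^{\l}_{L,\eta^+})$.
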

\begin{proof}
Define the energy as follows:
\begin{eqnarray}\label{Characteristic curves}
E(\eta,\phi)=\cos\phi e^{V(\eta)}.
\end{eqnarray}
In the plane $(\eta,\phi)\in[0,\infty)\times[-\pi,\pi)$, on the
curve $\phi=\phi(\eta)$ with constant energy, we can see
\begin{eqnarray}
\frac{\ud{E}}{\ud{\eta}}=\frac{\p E}{\p\eta}+\frac{\p
E}{\p\phi}\frac{\p\phi}{\p\eta}=-\cos\phi
F(\eta)\ue^{V(\eta)}-\sin\phi \ue^{V(\eta)}\frac{\p\phi}{\p\eta}=0,
\end{eqnarray}
which further implies
\begin{eqnarray}
\frac{\p\phi}{\p\eta}=-\frac{\cos\phi F(\eta)}{\sin\phi}.
\end{eqnarray}
Plugging this into the equation (\ref{Milne finite problem LI}), on
this curve, we deduce
\begin{eqnarray}
\frac{\ud{f}}{\ud{\eta}}=\frac{\p f}{\p\eta}+\frac{\p
f}{\p\phi}\frac{\p\phi}{\p\eta}=\frac{1}{\sin\phi}\bigg(\sin\phi\frac{\p
f}{\p\eta}-\cos\phi F(\eta)\frac{\p f}{\p\phi}\bigg).
\end{eqnarray}
Hence, this curve with constant energy is exactly the
characteristics of the equation (\ref{Milne finite problem LI}).
Along this curve, the equation can be simplified as follows:
\begin{eqnarray}
\l f_{\l}+\sin\phi\frac{\ud{f_{\l}}}{\ud{\eta}}+f_{\l}=H.
\end{eqnarray}
An implicit function $\eta^+(\eta,\phi)$ can be determined through
\begin{eqnarray}
\abs{E(\eta,\phi)}=\ue^{V(\eta^+)}.
\end{eqnarray}
which means $(\eta^+,\phi_0)$ with $\sin\phi_0=0$ is on the same
characteristics as $(\eta,\phi)$. We also define the quantities for
$0\leq\eta^+\leq\eta'\leq\eta$ as follows:
\begin{eqnarray}
\phi'(\phi,\eta,\eta')&=&\cos^{-1}(\cos\phi \ue^{V(\eta)-V(\eta')}),\\
R\phi'(\phi,\eta,\eta')&=&-\cos^{-1}(\cos\phi
\ue^{V(\eta)-V(\eta')})=-\phi'(\phi,\eta,\eta'),
\end{eqnarray}
where the inverse trigonometric function can be defined
single-valued in the domain $[0,\pi)$ and the quantities are always
well-defined due to the monotonicity of $V$. Finally, we denote
\begin{eqnarray}
G_{\eta,\eta'}^{\l}(\phi)=\int_{\eta'}^{\eta}\frac{1+\l}{\sin(\phi'(\phi,\eta,\xi))}\ud{\xi}.
\end{eqnarray}
With these notations, we can define the solution to (\ref{Milne finite problem LI}) along the characteristics
as follows:\\
\ \\
{\bf Case I}. For $\sin\phi>0$ and $\abs{E(\eta,\phi)}\leq 1$,
\begin{eqnarray}\label{Milne t 08.}
f_{\l}^L(\eta,\phi) = h(\phi'(\phi,\eta,0))\exp(-G^{\l}_{\eta,0})
+\int_0^{\eta}\frac{H(\eta',\phi'(\phi,\eta,\eta'))}{\sin(\phi'(\phi,\eta,\eta'))}
\exp(-G^{\l}_{\eta,\eta'})\ud{\eta'}.
\end{eqnarray}
{\bf Case II}. For $\sin\phi>0$ and $\abs{E(\eta,\phi)}\geq 1$,
\begin{eqnarray}\label{Milne t 09.}
f_{\l}^L(\eta,\phi) = f_{\l}^L(\eta^+(\eta,\phi),\phi_0)\exp(-G^{\l}_{\eta,\eta^+})
+\int_{\eta^+}^{\eta}\frac{H(\eta',\phi'(\phi,\eta,\eta'))}{\sin(\phi'(\phi,\eta,\eta'))}
\exp(-G^{\l}_{\eta,\eta'})\ud{\eta'}.
\end{eqnarray}
{\bf Case III}. For $\sin\phi<0$ and $\abs{E(\eta,\phi)}\geq 1$,
\begin{eqnarray}\label{Milne t 10.}
f_{\l}^L(\eta,\phi)=f_{\l}^L(\eta^+(\eta,\phi),\phi_0)\exp(-G^{\l}_{\eta,\eta^+})
+\int_{\eta^+}^{\eta}\frac{H(\eta',\phi'(\phi,\eta,\eta'))}{\sin(\phi'(\phi,\eta,\eta'))}
\exp(-G^{\l}_{\eta,\eta'})\ud{\eta'}.
\end{eqnarray}
{\bf Case IV}. For $\sin\phi<0$ and $\abs{E(\eta,\phi)}\leq 1$,
\begin{eqnarray}\label{Milne t 11.}
f_{\l}^L(\eta,\phi)&=&h(\phi'(\phi,\eta,0))\exp\bigg[-G^{\l}_{L,0}-G^{\l}_{L,\eta}\bigg]\\
& & +\int_0^{L}\frac{H(\eta',R\phi'(\phi,\eta,\eta'))}{\sin(\phi'(\phi,\eta,\eta'))}
\exp\bigg[-G^{\l}_{L,\eta'}-G^{\l}_{L,\eta}\bigg]\ud{\eta'}\nonumber\\
&& +\int_{\eta}^{L}\frac{H(\eta',\phi'(\phi,\eta,\eta'))}{\sin(\phi'(\phi,\eta,\eta'))}\exp(G^{\l}_{\eta,\eta'})
\ud{\eta'}\nonumber.
\end{eqnarray}
In the following, we give the estimate of (\ref{Milne t 01}). In Case I, (\ref{Milne t 08.}) derives to
\begin{eqnarray}
\lnnm{f_{\l}^L}&\leq&\lnm{h}\exp(-G^{\l}_{\eta,0})
+\lnnm{H}\int_0^{\eta}\frac{\exp(-G^{\l}_{\eta,\eta'})}{\sin(\phi'(\phi,\eta,\eta'))}\ud{\eta'}\\
&=&\lnm{h}\exp(-G^{\l}_{\eta,0})
+\lnnm{H}\frac{1}{1+\l}\bigg(1-\exp(-G^{\l}_{\eta,0})\bigg)\no\\
&\leq&\lnm{h}+\lnnm{H}.\no
\end{eqnarray}
\ \\
In Case II and III, 
the main difficulty is the lack of estimate for
$f_{\l}^L(\eta^+(\eta,\phi),\phi_0)$. But, we denote the points
$(\eta^+,\phi_0)$, $(L, \phi_L)$, and $(L,-\phi_L)$ with
\begin{eqnarray}
\phi_L=\cos^{-1}\bigg(\ue^{V(\eta^+(\eta,\phi))-V(L)}\bigg)>0
\end{eqnarray}
are on the same characteristic line (\ref{Characteristic curves}). Then, along this
characteristic line, we can compute that
\begin{eqnarray}
&& f_{\l}^L(\eta^+,\phi_0)=
f_{\l}^L(L,\phi_L)\exp(G^{\l}_{L,\eta^+}(\phi_L))-\int_{\eta^+}^{L}
\frac{H(\eta',\phi'(\phi_L,\eta,\eta'))}{\sin(\phi'(\phi_L,\eta,\eta'))}\exp(G^{\l}_{\eta',\eta^+}(\phi_L))\ud{\eta'},
\label{mt 01}\\
&& f_{\l}^L(\eta^+,\phi_0)=
f_{\l}^L(L,-\phi_L)\exp(RG^{\l}_{L,\eta^+}(\phi_L))-\int_{\eta^+}^{L}
\frac{H(\eta',R\phi'(\phi_L,\eta,\eta'))}{\sin(\phi'(\phi_L,\eta,\eta'))}\exp(RG^{\l}_{\eta',\eta^+}(\phi_L))\ud{\eta'}.
\end{eqnarray}
Then naturally we have
\begin{eqnarray}
&&f_{\l}^L(L,\phi_L)\exp(G^{\l}_{L,\eta^+}(\phi_L))-\int_{\eta^+}^{L}
\frac{H(\eta',\phi'(\phi_L,L,\eta'))}{\sin(\phi'(\phi_L,L,\eta'))}\exp(G^{\l}_{\eta',\eta^+}(\phi_L))\ud{\eta'}\\
&=& f_{\l}^L(L,-\phi_L)\exp(-G^{\l}_{L,\eta^+}(\phi_L))-\int_{\eta^+}^{L}
\frac{H(\eta',R\phi'(\phi_L,L,\eta'))}{\sin(\phi'(\phi_L,L,\eta'))}\exp(-G^{\l}_{\eta',\eta^+}(\phi_L))\ud{\eta'}.\no
\end{eqnarray}
The specular reflective boundary condition implies
$f_{\l}^L(L,\phi_L)=f_{\l}^L(L,-\phi_L)$. Then we obtain
\begin{eqnarray}
\\
&& f_{\l}^L(L,\phi_L) = \frac{1}{\exp(G^{\l}_{L,\eta^+}(\phi_L))
-\exp(-G^{\l}_{L,\eta^+}(\phi_L))}
\Bigg(\displaystyle\int_{\eta^+}^{L}\dfrac{H(\eta',\phi'(\phi_L,L,\eta'))}{\sin(\phi'(\phi_L,L,\eta'))}
\exp(G^{\l}_{\eta',\eta^+}(\phi_L))\ud{\eta'}
\notag\\
&&\hspace{2cm} -\int_{\eta^+}^{L}\dfrac{H(\eta',R\phi'(\phi_L,L,\eta'))}
{\sin(\phi'(\phi_L,L,\eta'))}\exp(-G^{\l}_{\eta',\eta^+}(\phi_L))\ud{\eta'}\bigg).
\no
\end{eqnarray}
It naturally leads to
\begin{eqnarray}\label{mt 02}
\\
|f_{\l}^L(\eta^+,\phi_0)|\leq\lnnm{H}\frac{\displaystyle\int_{\eta^+}^{L}\dfrac{\exp(-G^{\l}_{L,\eta'}(\phi_L))}
{\sin(\phi'(\phi_L,L,\eta'))}\ud{\eta'}
+\int_{\eta^+}^{L}\dfrac{\exp(G^{\l}_{L,\eta'}(\phi_L))}{\sin(\phi'(\phi_L,L,\eta'))}\ud{\eta'}}
{\exp(G^{\l}_{L,\eta^+}(\phi_L))-\exp(-G^{\l}_{L,\eta^+}(\phi_L))} \leq \lnnm{H}.\no
\end{eqnarray}
Similar to the estimates in Case I, we have
\begin{eqnarray}
\lnnm{f_{\l}^L}\leq \bigg( \exp(-G^{\l}_{L,\eta^+}(\phi_L))+ \int_{\eta^+}^{L}
\frac{\exp(-G^{\l}_{L,\eta'}(\phi_L))}{\sin(\phi'(\phi_L,L,\eta'))}\ud{\eta'}\bigg) \lnnm{H} \leq \lnnm{H}.
\end{eqnarray}
\ \\
In case IV, it is similar to Case I, we can directly estimate to obtain
\begin{eqnarray}
\lnnm{f_{\l}^L}\leq \lnm{h}+ \lnnm{H}.
\end{eqnarray}
Summarizing all above, we complete the proof of (\ref{Milne t 01}).
\end{proof}

\subsubsection{Infinite Slab}

Let $L\rightarrow \infty$, we consider the following problem in the infinite slab $\eta \in (0,\infty)$,
\begin{eqnarray}\label{Milne finite problem LT penalty}
\left\{
\begin{array}{rcll}\displaystyle
\l f_{\l}+\sin\phi\frac{\p
f_{\l}}{\p\eta}-F(\eta)\cos\phi\frac{\p
f_{\l}}{\p\phi}+f_{\l} &=&H(\eta,\phi), & \eta>0,\\
f_{\l}(0,\phi)&=&h(\phi) & \text{for}\ \ \sin\phi>0,\\
\lim_{\eta \rightarrow \infty}f_{\l} &=& f_{\infty}.
\end{array}
\right.
\end{eqnarray}
The following lemma holds.
\begin{lemma}\label{Milne infinite LT generalB}
$\lnnm{H}<\infty$ and $\lnm{h}<\infty$, then the
solution $f_{\l}(\eta,\phi)$ to the penalized $\e$-Milne
equation (\ref{Milne finite problem LT penalty}) satisfies
\begin{eqnarray}\label{Milne t 02}
\lnnm{f_{\l}}\leq C\bigg(\lnm{h}+\lnnm{H}\bigg)
\end{eqnarray}
where $C$ is a constant independent of $\l$.
\end{lemma}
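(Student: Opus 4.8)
The plan is to construct $f_{\l}$ as the limit of the finite-slab penalty solutions as $L\rt\infty$ and to pass the uniform bound to the limit. For every $L>0$, Lemma \ref{Milne infinite LT generalA} produces a solution $f_{\l}^{L}$ of the finite-slab problem (\ref{Milne finite problem LI}) with
\begin{eqnarray}
\lnnm{f_{\l}^{L}}\leq\lnm{h}+\lnnm{H},
\end{eqnarray}
and the decisive feature is that the right-hand side depends on neither $L$ nor $\l$. Hence, on each fixed strip $[0,N]\times[-\pi,\pi)$ the restrictions $\{f_{\l}^{L}\}_{L>N}$ are uniformly bounded, and a diagonal extraction produces a weak-$*$ limit $f_{\l}\in L^{\infty}([0,\infty)\times[-\pi,\pi))$ along a subsequence $L_{k}\rt\infty$.

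First I would check that this limit is the desired solution. Since the penalty transport operator in (\ref{Milne finite problem LT penalty}) is linear with coefficients and forcing $H$ that are independent of $L$, testing against smooth compactly supported functions and using weak-$*$ convergence shows that $f_{\l}$ solves the infinite-slab equation distributionally; the inflow datum at $\eta=0$ is preserved because it is transported along the characteristics (\ref{Characteristic curves}) emanating from $\{\sin\phi>0\}$, where the boundary value is the fixed $h$. The bound (\ref{Milne t 02}) then follows from the weak-$*$ lower semicontinuity of the $L^{\infty}$ norm,
\begin{eqnarray}
\lnnm{f_{\l}}\leq\liminf_{k\rt\infty}\lnnm{f_{\l}^{L_{k}}}\leq\lnm{h}+\lnnm{H},
\end{eqnarray}
so that $C$ may in fact be taken to be an absolute constant, indeed $1$.

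The step I expect to be the main obstacle is the far-field condition $\lim_{\eta\rt\infty}f_{\l}=f_{\infty}$. In the finite slab the turning (grazing) characteristics, on which $\abs{E(\eta,\phi)}=\abs{\cos\phi}e^{V(\eta)}\geq 1$, never reach $\eta=0$ and were closed off by the specular reflection at $\eta=L$ in Cases II--III of Lemma \ref{Milne infinite LT generalA}; this reflection disappears as $L\rt\infty$. I must therefore verify that $f_{\l}$ genuinely approaches a constant state at infinity rather than merely staying bounded. Here the penalty ($\l>0$) is essential: for $\eta$ large one has $F\equiv 0$, so $V$ and the characteristics are eventually flat, and the damping factor $\exp(-G^{\l})$ along each characteristic forces convergence to a limit; the uniform bound already yields $\abs{f_{\infty}}\leq\lnm{h}+\lnnm{H}$, so no separate control of the far-field constant is needed. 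Uniqueness within this class, should it be required, follows from the energy identity used in Lemma \ref{Milne infinite LT}.
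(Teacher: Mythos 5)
Your argument is correct in substance, but it takes a genuinely different route from the paper. The paper does not extract a weak-$*$ limit at all: it passes to the limit $L\rt\infty$ directly in the explicit characteristic representations (\ref{Milne t 08.})--(\ref{Milne t 11.}), using $\lim_{L\rt\infty}\exp(-G^{\l}_{L,\eta})=0$ for $\sin\phi<0$ to eliminate the reflected data, and thereby \emph{defines} the infinite-slab solution as $f_{\l}=\a_{\l}[h]+\t_{\l}[H]$; the bound (\ref{Milne t 02}) is then read off from the separate operator estimates $\lnnm{\a_{\l}[h]}\leq\lnm{h}$ (Lemma \ref{Milne lemma 1}) and $\lnnm{\t_{\l}[H]}\leq C\lnnm{H}$ (Lemma \ref{Milne lemma 2}). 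Your compactness-plus-lower-semicontinuity argument delivers the same bound more cheaply (and with $C=1$), but it buys less: the explicit decomposition into $\a_{\l}$ and $\t_{\l}$ is precisely what the paper needs afterwards, since Lemma \ref{Milne lemma 3}, the $L^{\infty}L^2$ control of $\t$, and the exponential decay estimates all manipulate these kernels pointwise, so the representation must be built anyway. Two of your steps deserve tightening. For the inflow trace, weak-$*$ convergence does not control traces; the clean fix is to observe that on the Case I region ($\sin\phi>0$, $\abs{E(\eta,\phi)}\leq1$, which contains all of $\{\eta=0,\ \sin\phi>0\}$ because $V(0)=0$) the formula (\ref{Milne t 08.}) involves only data on $[0,\eta]$ and is literally independent of $L$, so the limit coincides with it there and the boundary condition holds exactly. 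For the far-field condition, your heuristic (flat characteristics and exponential damping for large $\eta$) does not by itself produce a limit for a general bounded $H$ --- for large $\eta$ the solution is a convolution of $H$ with an exponential kernel and converges only if $H$ does --- though the paper is equally silent on this point, and in the eventual application $H=\bar z+S$ with $S$ decaying, so nothing downstream is affected.
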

Now, we use the fact that
\begin{eqnarray}
\lim_{L\rt\infty}\exp(-G^{\l}_{L,\eta})=0~~~~~~\text{for}~~\sin\phi<0.
\end{eqnarray}
It can be defined the solution via taking limit $L\rt\infty$ in
(\ref{Milne t 08.})-(\ref{Milne t 11.}) as
follows:
\begin{eqnarray}
f_{\l}(\eta,\phi)=\a_{\l}[h(\phi)]+\t_{\l}[H(\eta,\phi)],
\end{eqnarray}
where\\
\ \\
{\bf Case I}.  For $\sin\phi>0$ and $\abs{E(\eta,\phi)}\leq 1$,
\begin{eqnarray}\label{Milne temp 12}
\a_{\l}[h(\phi)]&=&h(\phi'(\phi,\eta,0))\exp(-G^{\l}_{\eta,0})\\
\t_{\l}[H(\eta,\phi)]&=&\int_0^{\eta}\frac{H(\eta',\phi'(\phi,\eta,\eta'))}{\sin(\phi'(\phi,\eta,\eta'))}\exp(-G^{\l}_{\eta,\eta'})\ud{\eta'}.
\end{eqnarray}
\ \\
{\bf Case II}.  For $\sin\phi>0$ and $\abs{E(\eta,\phi)}\geq 1$,
\begin{eqnarray}\label{Milne temp 13}
\a_{\l}[h(\phi)]&=&0\\
\t_{\l}[H(\eta,\phi)]&=&f_{\l}(\eta^+,\phi_0)\exp(-G^{\l}_{\eta,\eta^+})
+\int_{\eta^+}^{\eta}\frac{H(\eta',\phi'(\phi,\eta,\eta'))}{\sin(\phi'(\phi,\eta,\eta'))}
\exp(-G^{\l}_{\eta,\eta'})\ud{\eta'},\\
&=&\int_{\eta^+}^{\infty}\frac{H(\eta',R\phi'(\phi,\eta,\eta'))}{\sin(\phi'(\phi,\eta,\eta'))}
\exp(G^{\l}_{\eta,\eta'}-G^{\l}_{\eta,\eta^+})\ud{\eta'}\no\\
&&+\int_{\eta^+}^{\eta}\frac{H(\eta',\phi'(\phi,\eta,\eta'))}{\sin(\phi'(\phi,\eta,\eta'))}
\exp(-G^{\l}_{\eta,\eta'})\ud{\eta'}.\no
\end{eqnarray}
\ \\
{\bf Case III}.
For $\sin\phi<0$,
\begin{eqnarray}\label{Milne temp 14}
\a_{\l}[h(\phi)]&=&0\\
\t_{\l}[H(\eta,\phi)]&=&\int_{\eta}^{\infty}\frac{H(\eta',R\phi'(\phi,\eta,\eta'))}{\sin(\phi'(\phi,\eta,\eta'))}
\exp(G^{\l}_{\eta,\eta'})\ud{\eta'}.
\end{eqnarray}
In Case II, we replace $f_{\l}(\eta^+,\phi_0)$ by the integral along the characteristics in $\sin\phi<0$. Also, the latter two cases are combined into a united one. In order to achieve the estimate of $f_{\l}$, we
need to give several technical lemmas about $\a_{\l}$ and $\t_{\l}$.
From (\ref{Milne temp 12})-(\ref{Milne temp 14}), one can easily obtain
\begin{lemma}\label{Milne lemma 1}
The operator $\a_{\l}$ satisfies
\begin{eqnarray}
\lnnm{\a_{\l}[h(\phi)]}\leq
\lnm{h}.\label{Milne temp 352}
\end{eqnarray}
\end{lemma}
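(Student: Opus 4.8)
The plan is to read the bound directly off the explicit representation of $\a_{\l}$ in the three cases (\ref{Milne temp 12})--(\ref{Milne temp 14}), after first observing that $\a_{\l}[h]$ is supported only on the region treated in Case~I. Indeed, in Case~II ($\sin\phi>0$, $\abs{E(\eta,\phi)}\geq1$) and in Case~III ($\sin\phi<0$) the definitions give $\a_{\l}[h]\equiv0$, so the claimed inequality holds trivially there; the entire content of the lemma lies in Case~I. This reduction is the first step, and it already disposes of two of the three regimes.

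In Case~I we have the closed form
\[
\a_{\l}[h(\phi)]=h\big(\phi'(\phi,\eta,0)\big)\,\exp\big(-G^{\l}_{\eta,0}\big),
\]
so the second step is to bound the two factors separately. First, since $V(0)=0$, the angle $\phi'(\phi,\eta,0)=\cos^{-1}\!\big(\cos\phi\,\ue^{V(\eta)}\big)$ lies in $[0,\pi)$ by the single-valued choice of $\cos^{-1}$, hence $\sin(\phi'(\phi,\eta,0))\geq0$; in the interior of Case~I (where $\abs{E(\eta,\phi)}=\abs{\cos\phi}\,\ue^{V(\eta)}<1$) this is strictly positive, so $\phi'(\phi,\eta,0)$ lies in the half-domain $\sin\phi>0$ on which $h$ is defined, giving $\abs{h(\phi'(\phi,\eta,0))}\leq\lnm{h}$. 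Second, the integrand in $G^{\l}_{\eta,0}(\phi)=\int_0^{\eta}\tfrac{1+\l}{\sin(\phi'(\phi,\eta,\xi))}\ud{\xi}$ is nonnegative along the characteristic (again $\sin\phi'>0$ and $1+\l>0$), so $G^{\l}_{\eta,0}\geq0$ and therefore $\exp(-G^{\l}_{\eta,0})\leq1$.

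Combining the two bounds yields $\abs{\a_{\l}[h(\phi)]}\leq\lnm{h}$ pointwise in Case~I, and taking the supremum over all $(\eta,\phi)\in[0,\infty)\times[-\pi,\pi)$ together with the trivial Cases~II and III gives $\lnnm{\a_{\l}[h]}\leq\lnm{h}$, which is exactly (\ref{Milne temp 352}). I expect no genuine obstacle here: the only points needing verification are that the characteristic carrying the boundary datum stays in $\{\sin\phi'>0\}$ (so that $h$ is evaluated within its domain and the factor $\exp(-G^{\l}_{\eta,0})$ is a true exponential damping rather than a growth), and that $G^{\l}_{\eta,0}$ is nonnegative; both follow immediately from the monotonicity of $V$ established in (\ref{Property of Potential fuction}) and the single-valued definition of the inverse cosine on $[0,\pi)$.
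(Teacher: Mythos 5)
Your argument is correct and is essentially the paper's own proof: the paper likewise notes that $\sin(\phi'(\phi,\eta,\xi))>0$ along the characteristic forces $\exp(-G^{\l}_{\eta,0})\leq 1$, from which the bound follows immediately. Your additional remarks (the trivial vanishing of $\a_{\l}$ in Cases II and III, and the check that $h$ is evaluated inside its domain $\sin\phi>0$) are just a more careful write-up of the same one-line argument.
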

\begin{proof} Since $\sin(\phi'(\phi,\eta,\xi))>0$ for $0\leq \xi \leq \eta$, we know that
\begin{eqnarray*}
\exp(-G^{\l}_{\eta,0})\leq 1.
\end{eqnarray*}
Then, the estimate of (\ref{Milne temp 352}) holds immediately.
\end{proof}

\begin{lemma}\label{Milne lemma 2}
There exists some constant such that the integral operator $\t_{\l}$ satisfies
\begin{eqnarray}\label{Milne temp 53}
\lnnm{\t_{\l}[H]}\leq C\lnnm{H}~~~\quad\text{for ~all}~\l>0.
\end{eqnarray}
\end{lemma}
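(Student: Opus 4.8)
The plan is to estimate $\t_{\l}[H]$ separately in the three cases (\ref{Milne temp 12})--(\ref{Milne temp 14}), exploiting the single structural identity that underlies all of them. From $G^{\l}_{\eta,\eta'}(\phi)=\int_{\eta'}^{\eta}\frac{1+\l}{\sin(\phi'(\phi,\eta,\xi))}\ud{\xi}$ one has $\frac{\p}{\p\eta'}G^{\l}_{\eta,\eta'}=-\frac{1+\l}{\sin(\phi'(\phi,\eta,\eta'))}$, so every kernel appearing in $\t_{\l}$ is an exact derivative,
\[
\frac{\exp(\mp G^{\l}_{\eta,\eta'})}{\sin(\phi'(\phi,\eta,\eta'))}=\pm\frac{1}{1+\l}\frac{\p}{\p\eta'}\exp(\mp G^{\l}_{\eta,\eta'}).
\]
Consequently each integral telescopes to boundary values of the bounded, monotone quantity $\exp(\mp G^{\l})$, and the prefactor $\frac{1}{1+\l}\leq 1$ produces a bound uniform in $\l$. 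The one preliminary fact I would record first is that $\sin(\phi'(\phi,\eta,\xi))\leq 1$ forces $\frac{1+\l}{\sin\phi'}\geq 1+\l$, so that $G^{\l}_{\eta,\eta'}\to-\infty$ and hence $\exp(G^{\l}_{\eta,\eta'})\to 0$ as $\eta'\to\infty$; this is what annihilates the far-field boundary term.

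Then I would dispatch the easy cases by bounding $|H|\leq\lnnm{H}$ and integrating the exact derivative. In Case I, $\int_0^{\eta}\frac{\exp(-G^{\l}_{\eta,\eta'})}{\sin\phi'}\ud{\eta'}=\frac{1}{1+\l}\big(1-\exp(-G^{\l}_{\eta,0})\big)\leq 1$. Case III is the same computation with the opposite sign in the exponent: the kernel $\exp(+G^{\l}_{\eta,\eta'})$ now equals $1$ at $\eta'=\eta$ and vanishes at $\eta'=\infty$ by the preliminary fact, so the integral equals $\frac{1}{1+\l}\leq 1$. The second term of Case II, $\int_{\eta^+}^{\eta}\frac{\exp(-G^{\l}_{\eta,\eta'})}{\sin\phi'}\ud{\eta'}$, is handled exactly as in Case I and is again $\leq\frac{1}{1+\l}\leq 1$, since $\eta'\leq\eta$ keeps the exponent nonnegative.

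The main obstacle is the first term of Case II, $\int_{\eta^+}^{\infty}\frac{H(\eta',R\phi'(\phi,\eta,\eta'))}{\sin\phi'}\exp\!\big(G^{\l}_{\eta,\eta'}-G^{\l}_{\eta,\eta^+}\big)\ud{\eta'}$, where two difficulties meet at the turning point $\eta^+$: the kernel $1/\sin\phi'$ is singular there (because $\phi'(\phi,\eta,\eta^+)=\phi_0$ with $\sin\phi_0=0$), and the domain is unbounded. I would set $\Phi(\eta')=\exp(G^{\l}_{\eta,\eta'}-G^{\l}_{\eta,\eta^+})$ and note that $\Phi$ is continuous and monotonically decreasing on $[\eta^+,\infty)$ with $\Phi(\eta^+)=1$ and $\Phi(\infty)=0$. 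Although $\Phi'=-\frac{1+\l}{\sin\phi'}\Phi$ blows up at $\eta^+$, the exact-derivative identity gives $\int_{\eta^+}^{\infty}\frac{\Phi(\eta')}{\sin\phi'}\ud{\eta'}=-\frac{1}{1+\l}\int_{\eta^+}^{\infty}\Phi'(\eta')\ud{\eta'}=\frac{1}{1+\l}\big(\Phi(\eta^+)-\Phi(\infty)\big)=\frac{1}{1+\l}\leq 1$, so the singularity is integrable and contributes nothing beyond a bounded constant. Bounding $|H|\leq\lnnm{H}$ throughout and summing the two contributions of Case II then yields $\lnnm{\t_{\l}[H]}\leq 2\lnnm{H}$, establishing the claim with $C=2$, uniformly in $\l>0$.
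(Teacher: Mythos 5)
Your proof is correct and follows essentially the same route as the paper: a case-by-case analysis along the characteristics, bounding $\abs{H}$ by $\lnnm{H}$ and then evaluating the remaining kernel integral exactly by recognizing $\exp(\mp G^{\l}_{\eta,\eta'})/\sin(\phi'(\phi,\eta,\eta'))$ as $\pm\frac{1}{1+\l}\p_{\eta'}\exp(\mp G^{\l}_{\eta,\eta'})$, which is precisely the substitution the paper uses in Cases I and III. The only cosmetic difference is in the first term of Case II, where the paper reduces to Case III via the pointwise comparison $\exp(G^{\l}_{\eta,\eta'}-G^{\l}_{\eta,\eta^+})\leq\exp(-G^{\l}_{\eta,\eta'})$ while you integrate the exact derivative directly across the turning point; both yield the same uniform-in-$\l$ bound.
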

\begin{proof} The proof of this lemma will divided into three cases.
\ \\
In Case I, it implies that $\sin\phi>0$ and $\abs{E(\eta,\phi)}\leq 1$. One has
\begin{eqnarray}
\abs{\t_{\l}[H]}&\leq&\int_0^{\eta}\abs{H(\eta',\phi'(\phi,\eta,\eta'))}\frac{1}{\sin(\phi'(\phi,\eta,\eta'))}
\exp(-G^{\l}_{\eta,\eta'})\ud{\eta'}\\
&\leq&\lnnm{H}\int_0^{\eta}\frac{1}{\sin(\phi'(\phi,\eta,\eta'))}\exp(-G^{\l}_{\eta,\eta'})\ud{\eta'}\nonumber.
\end{eqnarray}
We can directly estimate
\begin{eqnarray}\label{Milne t 11}
\int_0^{\eta}\frac{1}{\sin(\phi'(\phi,\eta,\eta'))}\exp(-G^{\l}_{\eta,\eta'})\ud{\eta'}\leq\frac{1}{1+\l}\int_0^{\infty}
\ue^{-z}\ud{z} \leq \frac{1}{1+\l},
\end{eqnarray}
and (\ref{Milne temp 53}) naturally follows.
\ \\
In Case III, it implies that $\sin\phi<0$. So it holds that
\begin{eqnarray}
\abs{\t_{\l}[H]}&\leq&\int_{\eta}^{\infty}\abs{H(\eta',\phi'(\phi,\eta,\eta'))}\frac{1}{\sin(\phi'(\phi,\eta,\eta'))}
\exp(G^{\l}_{\eta,\eta'})\ud{\eta'}\\
&\leq&\lnnm{H}\int_{\eta}^{\infty}\frac{1}{\sin(\phi'(\phi,\eta,\eta'))}\exp(G^{\l}_{\eta,\eta'})\ud{\eta'}\nonumber.
\end{eqnarray}
Then, we have
\begin{eqnarray}
\int_{\eta}^{\infty}\frac{1}{\sin(\phi'(\phi,\eta,\eta'))}\exp(G^{\l}_{\eta,\eta'})\ud{\eta'}\leq
\frac{1}{1+\l}\int_{-\infty}^0e^{z}\ud{z}\leq \frac{1}{1+\l},
\end{eqnarray}
and (\ref{Milne temp 53}) easily follows in this case.
\ \\
In Case II, it satisfies that $\sin\phi>0$. We define that
\begin{eqnarray}
\t_{\l,1}[H]&=& f(\eta^+,\phi_0)\exp(-G^{\l}_{\eta,\eta^+})=\int_{\eta^+}^{\infty}\frac{H(\eta',R\phi'(\phi,\eta,\eta'))}{\sin(\phi'(\phi,\eta,\eta'))}
\exp(G^{\l}_{\eta,\eta'}-G^{\l}_{\eta,\eta^+})\ud{\eta'},\label{Milne T1}\\
\t_{\l,2}[H]&=& \int_{\eta^+}^{\eta}\frac{H(\eta',\phi'(\phi,\eta,\eta'))}{\sin(\phi'(\phi,\eta,\eta'))}
\exp(-G^{\l}_{\eta,\eta'})\ud{\eta'}.\label{Milne T2}
\end{eqnarray}
Since
\begin{eqnarray}
\exp(G^{\l}_{\eta,\eta'}-G^{\l}_{\eta,\eta^+})\leq \exp(-G^{\l}_{\eta,\eta'}),
\end{eqnarray}
then $\t_{\l,1}$ can be treated as in Case III.  Also, the proof of $\t_{\l,2}$ is similar to the Case I, we omit it here. The proof of Lemma \ref{Milne lemma 2} is completed.
\end{proof}

From Lemma \ref{Milne infinite LT generalB}, the bound of $f_{\l}$ in $L^{\infty}$ is independent of $\l$. So, let $\l \rightarrow 0$,
 we get a solution of
\begin{eqnarray} \label{Milne infinite problem1}
\left\{
\begin{array}{rcll}\displaystyle
\sin\phi\frac{\p
f}{\p\eta}-F(\eta)\cos\phi\frac{\p
f}{\p\phi}+f &=&H(\eta,\phi), & \eta>0,\\
f(0,\phi)&=&h(\phi) & \text{for}\ \ \sin\phi>0,\\
\lim_{\eta \rightarrow \infty}f &=& f_{\infty}.
\end{array}
\right.
\end{eqnarray}
In this case, we denote
\begin{eqnarray}
G_{\eta,\eta'}(\phi)=\int_{\eta'}^{\eta}\frac{1}{\sin(\phi'(\phi,\eta,\xi))}\ud{\xi}.
\end{eqnarray}
Then, the solution $f$ can be rewritten as
\begin{eqnarray}
f(\eta,\phi)=\a[h(\phi)]+\t[H(\eta,\phi)],
\end{eqnarray}
where the operator $\a$ and $\t$ is defined as

{\bf Case I}.  For $\sin\phi>0$ and $\abs{E(\eta,\phi)}\leq 1$,
\begin{eqnarray}\label{Milne temp 32}
\a[h(\phi)]&=&h(\phi'(\phi,\eta,0))\exp(-G_{\eta,0})\\
\t[H(\eta,\phi)]&=&\int_0^{\eta}\frac{H(\eta',\phi'(\phi,\eta,\eta'))}{\sin(\phi'(\phi,\eta,\eta'))}\exp(-G_{\eta,\eta'})\ud{\eta'}.
\end{eqnarray}
\ \\
{\bf Case II}.  For $\sin\phi>0$ and $\abs{E(\eta,\phi)}\geq 1$,
\begin{eqnarray}\label{Milne temp 33}
\a[h(\phi)]&=&0\\
\t[H(\eta,\phi)]&=&f(\eta^+,\phi_0)\exp(-G_{\eta,\eta^+})
+\int_{\eta^+}^{\eta}\frac{H(\eta',\phi'(\phi,\eta,\eta'))}{\sin(\phi'(\phi,\eta,\eta'))}
\exp(-G_{\eta,\eta'})\ud{\eta'},\\
&=&\int_{\eta^+}^{\infty}\frac{H(\eta',R\phi'(\phi,\eta,\eta'))}{\sin(\phi'(\phi,\eta,\eta'))}
\exp(G_{\eta,\eta'}-G_{\eta,\eta^+})\ud{\eta'}\no\\
&&+\int_{\eta^+}^{\eta}\frac{H(\eta',\phi'(\phi,\eta,\eta'))}{\sin(\phi'(\phi,\eta,\eta'))}
\exp(-G_{\eta,\eta'})\ud{\eta'}.\no
\end{eqnarray}
\ \\
{\bf Case III}.
For $\sin\phi<0$,
\begin{eqnarray}\label{Milne temp 34}
\a[h(\phi)]&=&0\\
\t[H(\eta,\phi)]&=&\int_{\eta}^{\infty}\frac{H(\eta',R\phi'(\phi,\eta,\eta'))}{\sin(\phi'(\phi,\eta,\eta'))}
\exp(G_{\eta,\eta'})\ud{\eta'}.
\end{eqnarray}

For the estimate of the solution in $L^{\infty}$, we need the following estimate of $\t[H]$.
\begin{lemma}\label{Milne lemma 3}
For any $\delta>0$ there is a constant $C(\delta)>0$ independent of
data such that
\begin{eqnarray}
\ltnm{\t[H]}\leq C(\delta)\tnnm{H}+\delta\lnnm{H}\label{Milne temp
11}.
\end{eqnarray}
\end{lemma}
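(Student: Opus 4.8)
The plan is to fix $\eta\in[0,\infty)$ and bound $\tnm{\t[H](\eta,\cdot)}$ uniformly in $\eta$, after splitting the $\phi$-domain into the grazing region $\{\abs{\sin\phi}\leq\delta'\}$ and its complement $\{\abs{\sin\phi}>\delta'\}$, where $\delta'=\delta'(\delta)>0$ is a small parameter to be fixed at the end. On the grazing region I will use its small measure together with a uniform pointwise bound on $\t[H]$; on the non-grazing region I will trade the singular kernel $1/\sin(\phi'(\phi,\eta,\eta'))$ for an $L^2$ norm of $H$ via a change of variables $\phi\mapsto\phi'$. Throughout I use that the characteristic kernel has total mass at most one, which follows from the identity $\int_0^{\eta}\frac{1}{\sin(\phi'(\phi,\eta,\eta'))}\exp(-G_{\eta,\eta'})\ud{\eta'}=1-\exp(-G_{\eta,0})\leq1$ (the $\l=0$ version of (\ref{Milne t 11})).

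For the grazing part, the mass bound applied to the Case I representation (\ref{Milne temp 32}), and its analogues in Cases II and III, gives the pointwise estimate $\abs{\t[H](\eta,\phi)}\leq\lnnm{H}$, consistent with Lemma \ref{Milne lemma 2}. Since $\abs{\{\abs{\sin\phi}\leq\delta'\}}\leq C\delta'$, this yields
\[
\int_{\{\abs{\sin\phi}\leq\delta'\}}\abs{\t[H](\eta,\phi)}^2\ud{\phi}\leq C\delta'\lnnm{H}^2 .
\]
Choosing $\delta'$ so that $C\delta'\leq\delta^2$ produces the contribution $\delta\lnnm{H}$ in (\ref{Milne temp 11}).

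For the non-grazing part I would first apply Cauchy--Schwarz in $\eta'$ and the mass bound to obtain
\[
\abs{\t[H](\eta,\phi)}^2\leq\int_0^{\eta}\frac{\abs{H(\eta',\phi'(\phi,\eta,\eta'))}^2}{\sin(\phi'(\phi,\eta,\eta'))}\exp(-G_{\eta,\eta'})\ud{\eta'}.
\]
Integrating over $\{\abs{\sin\phi}>\delta'\}$ and interchanging the order of integration, I would, for each fixed $\eta'\leq\eta$, change variables through $\cos\phi'=\cos\phi\,\exp(V(\eta)-V(\eta'))$, whose Jacobian obeys $\ud{\phi}/\sin\phi'=\ud{\phi'}/(\sin\phi\,\exp(V(\eta)-V(\eta')))$. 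Since $\abs{\sin\phi}>\delta'$ and $\exp(V(\eta)-V(\eta'))\geq1$ by (\ref{Property of Potential fuction}), the weight is at most $1/\delta'$, and with $\exp(-G_{\eta,\eta'})\leq1$ this gives
\[
\int_{\{\abs{\sin\phi}>\delta'\}}\abs{\t[H](\eta,\phi)}^2\ud{\phi}\leq\frac{1}{\delta'}\int_0^{\eta}\tnm{H(\eta',\cdot)}^2\ud{\eta'}\leq\frac{1}{\delta'}\tnnm{H}^2,
\]
contributing $C(\delta)\tnnm{H}$ with $C(\delta)=(\delta')^{-1/2}$. Summing the two regions and taking the supremum in $\eta$ yields (\ref{Milne temp 11}); Cases II and III are handled the same way once $G_{\eta,\eta'}$ and the integration endpoint $0$ are replaced by $\eta^{+}$ or $\eta$ with the appropriate sign of the exponent.

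The main obstacle I expect is the non-grazing change of variables in Case II, where the characteristic through $(\eta,\phi)$ reflects at $\eta^{+}$ and the representation of $\t[H]$ separates into an incoming and a reflected integral. There one must check that $\phi\mapsto\phi'$ remains a bijection with the same Jacobian control on each branch and that the reflected exponent $G_{\eta,\eta'}-G_{\eta,\eta^{+}}$ is dominated by $-G_{\eta,\eta'}$, as already used in Lemma \ref{Milne lemma 2}. Controlling the singularity $\sin\phi'\to0$ uniformly as $\eta'\to\eta^{+}$ is the delicate point, but the uniform bounds $1\leq\exp(V(\eta))\leq4$ from (\ref{Property of Potential fuction}) keep every distortion finite and should close the argument.
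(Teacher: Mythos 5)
Your proposal is correct, and it follows a genuinely different route from the paper's. The paper keeps the full $\phi$-integral and splits the \emph{inner} $\eta'$-integral along each characteristic according to whether $\sin(\phi'(\phi,\eta,\eta'))$ exceeds a small parameter $m$, introducing a second parameter $\sigma$ in $\eta'-\eta$ for the $\sin\phi<0$ case and converting the near-grazing contribution into a set of small $\phi$-measure by tracking (and Taylor-expanding) the relation between $\sin\phi$ and $\sin\phi'$ along a characteristic. You instead split the \emph{outer} $\phi$-integral at $\abs{\sin\phi}=\delta'$, dispose of the grazing set by the pointwise bound $\abs{\t[H]}\leq\lnnm{H}$ times the small measure $C\delta'$, and handle the non-grazing set by the change of variables $\cos\phi'=\cos\phi\,\ue^{V(\eta)-V(\eta')}$ with Jacobian $\ud{\phi}/\sin\phi'=\ud{\phi'}/\big(\sin\phi\,\ue^{V(\eta)-V(\eta')}\big)$, which together with $0\leq V\leq\ln 4$ turns the singular kernel into a weight bounded by $4/\delta'$. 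Your version is more economical (one parameter, no expansion of $\sin\phi'$), and, more substantively, the change of variables supplies a justification that the paper's $I_1$ estimate leaves implicit: there the line integral $\int_0^{\eta}\abs{H(\eta',\phi'(\phi,\eta,\eta'))}^2\ud{\eta'}$ along a single characteristic is replaced by $\tnnm{H}^2$, a replacement that is not valid for fixed $\phi$ and really requires integrating in $\phi$ first and invoking precisely the Jacobian you wrote down; your ordering of the steps makes this legitimate. What the paper's formulation buys is that grazing is detected along the characteristic rather than at the evaluation point, which is the form reused verbatim in the weighted ($\ue^{K_0\eta}$) estimates of Theorem \ref{Milne theorem 2}. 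The only part of your plan still to be written out in full is the Case II bookkeeping at the turning point $\eta^+$: the map $\phi\mapsto\phi'$ is monotone on each of the two branches, each branch contributes at most $\int_{-\pi}^{\pi}\abs{H(\eta',\cdot)}^2\ud{\phi'}$ after the substitution, and the mass bound $\int_{\eta^+}^{\infty}\frac{1}{\sin\phi'}\exp\big(G_{\eta,\eta'}-G_{\eta,\eta^+}\big)\ud{\eta'}\leq 1$ follows by direct integration since the integrand is an exact derivative; this is routine given the ingredients you list, so the argument closes.
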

\begin{proof}
We divide the proof into several steps.\\
\ \\
Step 1: \emph{The case of $\sin\phi>0$ and $\abs{E(\eta,\phi)}\leq1$}.\\
For simplicity, we denote
$\Omega_1=\{\sin(\phi'(\phi,\eta,\eta'))>m\}$ and $\Omega_2=\{\sin(\phi'(\phi,\eta,\eta'))\leq m\}$. Then, we consider
\begin{eqnarray}
 \int_{\sin\phi>0}\abs{\t
[H(\eta,\cdot)]}^2\ud{\phi}&=&\int_{\sin\phi>0}\bigg(\int_0^{\eta}
\frac{H(\eta',\phi'(\phi,\eta,\eta'))}{\sin(\phi'(\phi,\eta,\eta'))}
\exp(-G_{\eta,\eta'})\ud{\eta'}\bigg)^2\ud{\phi} \\
&=& \int\bigg(\int{\bf{1}}_{\Omega_1}
\ldots\bigg)^2+\int\bigg(\int{\bf{1}}_{\Omega_2}
\ldots\bigg)^2\nonumber\\
& =& I_1+I_2\nonumber
\end{eqnarray}
for some $m>0$. By Cauchy's inequality, we
get
\begin{eqnarray}\label{Milne temp 55}
I_1&\leq&\int_{\sin\phi>0}\bigg(\int_0^{\eta}\abs{H(\eta',\phi'(\phi,\eta,\eta'))}^2\ud{\eta'}\bigg)
\bigg(\int_0^{\eta}{\bf{1}}_{\Omega_1}\frac{\exp(-2G_{\eta,\eta'})}{\sin^2(\phi'(\phi,\eta,\eta'))}
\ud{\eta'}\bigg)\ud{\phi}\\
&\leq&\frac{1}{m}\tnnm{H}^2\int_{\sin\phi>0}
\bigg(\int_0^{\eta}{\bf{1}}_{\Omega_1}\frac{\exp(-2G_{\eta,\eta'})}{\sin(\phi'(\phi,\eta,\eta'))}
\ud{\eta'}\bigg)\ud{\phi}\nonumber\\
&\leq&\frac{\pi}{m}\tnnm{H}^2\nonumber.
\end{eqnarray}
On the other hand, for $\eta'\leq\eta$, we can directly estimate
$\phi'(\phi,\eta,\eta')\geq\phi$. Hence, we have the relation
\begin{eqnarray}
\sin\phi\leq\sin(\phi'(\phi,\eta,\eta')).
\end{eqnarray}
Therefore, we can directly estimate $I_2$ as follows:
\begin{eqnarray}
I_2&\leq&\lnnm{H}^2\int_{\sin\phi>0}
\bigg(\int_0^{\eta}{\bf{1}}_{\Omega_2}
\frac{1}{\sin(\phi'(\phi,\eta,\eta'))}\exp(-G_{\eta,\eta'})\ud{\eta'}\bigg)^2\ud{\phi}\\
&\leq&\lnnm{H}^2\int_{\sin\phi>0}
\bigg(\int_0^{\eta}{\bf{1}}_{\Omega_2}
\frac{1}{\sin(\phi'(\phi,\eta,\eta'))}\exp(-G_{\eta,\eta'})\ud{\eta'}\bigg)^2\ud{\phi}\nonumber\\
&=&\lnnm{H}^2\int_{\Omega_2}
\bigg(\int_0^{\eta}
\frac{1}{\sin(\phi'(\phi,\eta,\eta'))}\exp(-G_{\eta,\eta'})\ud{\eta'}\bigg)^2\ud{\phi}\nonumber.
\end{eqnarray}
Since
\begin{eqnarray}
\int_0^{\eta}
\frac{1}{\sin(\phi'(\phi,\eta,\eta'))}\exp(-G_{\eta,\eta'})\ud{\eta'}\leq\int_{0}^{\infty}\ue^{-z}\ud{z}=1,
\end{eqnarray}
we have, for $m$ sufficiently small,
\begin{eqnarray}\label{Milne temp 56}
I_2&\leq&\lnnm{H}^2\int_{\sin\phi>0}{\bf{1}}_{\{\sin\phi\leq
m\}}\ud{\phi}\leq 4m \lnnm{H}^2.
\end{eqnarray}
Summing up (\ref{Milne temp 55}) and (\ref{Milne temp 56}), we deduce (\ref{Milne temp 11}) for $m$
sufficiently small.\\
\ \\
Step 2: \emph{The case of $\sin\phi<0$}.\\
We can decompose
\begin{eqnarray}
\int_{\sin\phi<0}\abs{\t H}^2\ud{\phi}
&=&\int_{\sin\phi<0}\bigg(\int_{\eta}^{\infty}
\frac{H(\eta',R\phi'(\phi,\eta,\eta'))}{\sin(\phi'(\phi,\eta,\eta'))}
\exp(G_{\eta,\eta'})\ud{\eta'}\bigg)^2\ud{\phi}\\
&=&\int\bigg(\int
{\bf{1}}_{\Omega_1}\ldots\bigg)^2+\int\bigg(\int{\bf{1}}_{\Omega_2} {\bf{1}}_{\{\eta'-\eta\geq\sigma\}}
\ldots\bigg)^2\nonumber\\
&&+\int\bigg(\int{\bf{1}}_{\Omega_2}
{\bf{1}}_{\{\eta'-\eta\leq\sigma\}}
\ldots\bigg)^2\nonumber\\
&=&I_1+I_2+I_3,\nonumber
\end{eqnarray}
for some $m>0$ and $\sigma>0$. We can directly estimate $I_1$ as
follows:
\begin{eqnarray}\label{Milne temp 57}
I_1&\leq&\int_{\sin\phi<0}\bigg(\int_{\eta}^{\infty}\abs{H(\eta',R\phi'(\phi,\eta,\eta'))}^2\ud{\eta'}\bigg)
\bigg(\int_{\eta}^{\infty}{\bf{1}}_{\Omega_1}\frac{\exp(2G_{\eta,\eta'})}{\sin^2(\phi'(\phi,\eta,\eta'))}
\ud{\eta'}\bigg)\ud{\phi}\\
&\leq&\frac{1}{m}\tnnm{H}^2\int_{\sin\phi<0}
\bigg(\int_{\eta}^{\infty}{\bf{1}}_{\Omega_1}\frac{\exp(2G_{\eta,\eta'})}{\sin(\phi'(\phi,\eta,\eta'))}
\ud{\eta'}\bigg)\ud{\phi}\nonumber\\
&\leq&\frac{\pi}{m}\tnnm{H}^2\nonumber.
\end{eqnarray}
On the other hand, for $I_2$ we have
\begin{eqnarray}
I_2 \leq \lnnm{H}^2\int_{\sin\phi<0}
\bigg(\int_{\eta}^{\infty}{\bf{1}}_{\Omega_2}{\bf{1}}_{\{\eta'-\eta\geq\sigma\}}
\frac{\exp(G_{\eta,\eta'})}{\sin(\phi'(\phi,\eta,\eta'))}\ud{\eta'}\bigg)^2\ud{\phi}.
\end{eqnarray}
Note that
\begin{eqnarray}
G^{\l}_{\eta,\eta'}=\int_{\eta'}^{\eta}\frac{1}{\sin(\phi'(\phi,\eta,y))}\ud{y}\leq-\frac{\eta'-\eta}{m}=-\frac{\sigma}{m},
\end{eqnarray}
 we can obtain
\begin{eqnarray}\label{Milne temp 58}
I_2&\leq&\lnnm{H}^2\int_{\sin\phi<0}
\bigg(\int^{-\sigma/m}_{\infty}\ue^z\ud{z}\bigg)^2\ud{\phi}=4
\ue^{-\frac{\sigma}{m}}\lnnm{H}^2.
\end{eqnarray}
For $I_3$, we can estimate as follows:
\begin{eqnarray}
I_3&\leq&\lnnm{H}^2\int_{\sin\phi<0}
\bigg(\int_{\eta}^{\infty}{\bf{1}}_{\Omega_2}{\bf{1}}_{\{\eta'-\eta\leq\sigma\}}
\frac{\exp(G_{\eta,\eta'})}{\sin(\phi'(\phi,\eta,\eta'))}\ud{\eta'}\bigg)^2\ud{\phi}\\
&\leq&\lnnm{H}^2\int_{\sin\phi<0}
\bigg(\int_{\eta}^{\eta+\sigma}{\bf{1}}_{\Omega_2}{\bf{1}}_{\{\eta'-\eta\leq\sigma\}}
\frac{\exp(G_{\eta,\eta'})}{\sin(\phi'(\phi,\eta,\eta'))}\ud{\eta'}\bigg)^2\ud{\phi}\nonumber.
\end{eqnarray}
Note that
\begin{eqnarray}
\int_{\eta}^{\infty}
\frac{1}{\sin(\phi'(\phi,\eta,\eta'))}\exp(G_{\eta,\eta'})\ud{\eta'}\leq\int_{-\infty}^{0}\ue^{z}\ud{z}=1.
\end{eqnarray}
Then $1\leq\alpha=\ue^{V(\eta')-V(\eta)}\leq
\ue^{V(\eta+\sigma)-V(\eta)}\leq 1+4\sigma$, and for
$\eta'\in[\eta,\eta+\sigma]$,
$\sin\phi'(\phi,\eta,\eta'))=\sin\bigg(\cos^{-1}(\alpha\cos\phi)\bigg)$,
$\sin(\phi'(\phi,\eta,\eta'))<m$ lead to $\cos^2(\phi'(\phi,\eta,\eta'))\geq 1-m^2$. Thus, one has
\begin{eqnarray}
\abs{\sin\phi}&=&\sqrt{1-\cos^2\phi}=\sqrt{1-\frac{\cos^2\phi'(\phi,\eta,\eta')}{\alpha^2}}\leq \frac{\sqrt{\alpha^2-1+m^2}}{\alpha}
\\&\leq&
\frac{\sqrt{(1+4\sigma)^2-1+m^2}}{\alpha}\leq
\sqrt{9\sigma+m^2}.\nonumber
\end{eqnarray}
Hence, we can obtain
\begin{eqnarray}\label{Milne temp 59}
I_3&\leq&\lnnm{H}^2\int_{\sin\phi<0}{\bf{1}}_{\Omega_2}\ud{\phi}\leq\lnnm{H}^2
\int_{\sin\phi<0}{\bf{1}}_{\{\abs{\sin\phi}\leq \sqrt{9\sigma+m^2}\}}\ud{\phi}\\
&\leq&4\sqrt{9\sigma+m^2}\lnnm{H}^2\nonumber.
\end{eqnarray}
\ \\
Step 3: \emph{The case of $\sin\phi>0$ and $\abs{E(\eta,\phi)}\geq1$}.\\
\begin{eqnarray}
\a[h(\phi)]
&=&\int_{\eta^+}^{\infty}\ldots
\exp(G_{\eta,\eta'}-G_{\eta,\eta^+})\ud{\eta'}+\int_{\eta^+}^{\eta}\ldots
\exp(-G_{\eta,\eta'})\ud{\eta'}\\
&=&\int_{\eta}^{\infty}\ldots
\exp(G_{\eta,\eta'}-G_{\eta,\eta^+})\ud{\eta'}\no\\
&&+\bigg(\int_{\eta^+}^{\eta}\ldots
\exp(G_{\eta,\eta'}-G_{\eta,\eta^+})\ud{\eta'}+\int_{\eta^+}^{\eta}\ldots
\exp(-G_{\eta,\eta'})\ud{\eta'}\bigg)\no\\
&=&I_1+I_2.\no
\end{eqnarray}
Then $I_1$ can be treated as in Case I and $I_2$ as in Case III. Hence, it is already well-treated.\\
\ \\
Summarizing all three cases, we can choose small $\sigma$ and $m$ to guarantee the relation (\ref{Milne
temp 11}).
\end{proof}

\subsubsection{Estimates of $\e$-Milne Equation}

The difference $z=f-f_{\infty}$ satisfies the following equation
\begin{eqnarray}\label{difference equation}
\left\{
\begin{array}{rcl}\displaystyle
\sin\phi\frac{\p z}{\p\eta}-F(\eta)\cos\phi\frac{\p
z}{\p\phi}+z&=&\bar z+S,\\
z(0,\phi)= p(\phi)&=&h(\phi)-f_{\infty}\ \ \text{for}\ \ \sin\phi>0,\\
\lim_{\eta\rt\infty}z(\eta,\phi)&=&0.
\end{array}
\right.
\end{eqnarray}
\begin{lemma}\label{Milne lemma 4}
Assume (\ref{Milne bounded}) and (\ref{Milne decay}) hold. Then
there exists a constant $C$ such that the
solution of equation (\ref{difference equation}) verifies
\begin{eqnarray} \label{Milne T34}
\lnnm{z}\leq C\bigg(1+M+\frac{M}{K}+\tnnm{z}\bigg).
\end{eqnarray}
\end{lemma}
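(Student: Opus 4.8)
The plan is to use the explicit solution representation for the infinite-slab problem already derived above. Equation (\ref{difference equation}) is exactly of the form (\ref{Milne infinite problem1}) with forcing $H=\bar z+S$ and in-flow data $p(\phi)=h(\phi)-f_{\infty}$, so that $z=\a[p]+\t[\bar z+S]$. The two data-only pieces are immediate: by Lemma \ref{Milne lemma 1}, $\lnnm{\a[p]}\leq\lnm{p}\leq M+\abs{f_{\infty}}\leq C(1+M+M/K)$, where the bound on $f_{\infty}$ is furnished by the $L^2$ theory (Theorem \ref{Milne lemma 6}); and by Lemma \ref{Milne lemma 2}, $\lnnm{\t[S]}\leq C\lnnm{S}\leq CM$. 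The only genuinely problematic term is $\t[\bar z]$, since $\t$ does not map $L^2$ into $L^{\infty}$ while we are only allowed $\tnnm{z}$ on the right-hand side.

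The key device is a two-tier $L^2$-$L^{\infty}$ bootstrap that exploits the fact that $\bar z$ is a velocity average. First I would run the crude bound of Lemma \ref{Milne lemma 2} to reduce the full $L^{\infty}$ norm to an $L^{\infty}_{\eta}L^2_{\phi}$ norm: since $\abs{\bar z(\eta)}\leq(2\pi)^{-1/2}\tnm{z(\eta,\cdot)}$ by Cauchy--Schwarz, one has $\lnnm{\bar z}\leq(2\pi)^{-1/2}\ltnm{z}$, and therefore
\[
\lnnm{z}\leq\lnm{p}+C\lnnm{\bar z}+CM\leq C(1+M+M/K)+C\,\ltnm{z}.
\]
Thus it suffices to control $\ltnm{z}$ by $\tnnm{z}$ plus a small multiple of $\lnnm{z}$, which is exactly the situation addressed by the refined estimate.

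For that final step I would invoke Lemma \ref{Milne lemma 3}. Taking the $\ltnm{\cdot}$ norm in $z=\a[p]+\t[\bar z+S]$, bounding $\ltnm{\a[p]}\leq\sqrt{2\pi}\,\lnm{p}$, and applying Lemma \ref{Milne lemma 3} to $\t[\bar z+S]$ together with the elementary projection bounds $\tnnm{\bar z}\leq\tnnm{z}$ and $\lnnm{\bar z}\leq\lnnm{z}$ and the data bounds $\tnnm{S}\leq CM/K$, $\lnnm{S}\leq M$, one obtains for every $\delta>0$
\[
\ltnm{z}\leq C(1+M+M/K)+C(\delta)\,\tnnm{z}+\delta\,\lnnm{z}.
\]
Substituting this into the previous display gives $\lnnm{z}\leq C(1+M+M/K)+C(\delta)\tnnm{z}+C\delta\,\lnnm{z}$; choosing $\delta$ small enough that $C\delta\leq\tfrac12$ lets me absorb the last term into the left-hand side and conclude (\ref{Milne T34}). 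The main obstacle is precisely the need to dodge the failure of $\t:L^2\to L^{\infty}$: everything hinges on the interpolation-type inequality of Lemma \ref{Milne lemma 3}, whose $\delta\lnnm{H}$ term is harmless only because it is reintroduced solely through the averaged quantity $\bar z$, so that after chaining the two norms $\ltnm{z}$ and $\lnnm{z}$ it can be absorbed.
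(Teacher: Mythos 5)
Your proposal is correct and follows essentially the same route as the paper: the decomposition $z=\a[p]+\t[\bar z+S]$, the $L^{\infty}$ bounds of Lemmas \ref{Milne lemma 1} and \ref{Milne lemma 2}, the interpolation estimate of Lemma \ref{Milne lemma 3}, and the chaining of $\lnnm{z}$ through $\ltnm{z}$ via $\lnm{\bar z}\leq(2\pi)^{-1/2}\ltnm{z}$. The only cosmetic difference is that you absorb the small $\delta$-term at the level of $\lnnm{z}$ after substitution, whereas the paper absorbs $\delta\ltnm{z}$ already in the $L^{\infty}_\eta L^2_\phi$ estimate; both are equivalent.
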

\begin{proof}
Before giving the proof, we first show the following important inequalities for all function $l$ such that
\begin{eqnarray}
\tnm{\bar l}\leq\tnnm{l}\label{Milne temp 61},~~~~~~~\lnm{\bar l}\leq \frac{1}{\sqrt{2\pi}}\ltnm{l}.
\end{eqnarray}
It could be directly derived by Cauchy's inequality as follows:
\begin{eqnarray}
\tnm{\bar
l}^2&=&\int_0^{\infty}\bigg(\frac{1}{2\pi}\bigg)^2
\bigg(\int_{-\pi}^{\pi}l(\eta,\phi)\ud{\phi}\bigg)^2\ud{\eta}
\\
&\leq& \int_0^{\infty} \frac{1}{2\pi}\int_{-\pi}^{\pi}l^2(\eta,\phi)\ud{\phi}\ud{\eta}
= \int_0^{\infty}\int_{-\pi}^{\pi}l^2(\eta,\phi)\ud{\phi}\ud{\eta}=\tnnm{l}^2\nonumber,\\
 \lnm{\bar l}^2 &=&\bigg(\sup_{\eta}\bar
l(\eta)\bigg)^2=\sup_{\eta}\bigg(\frac{1}{2\pi}\int_{-\pi}^{\pi}l(\eta,\phi)\ud{\phi}\bigg)^2
\\
&\leq & \frac{1}{2\pi}\sup_{\eta}\int_{-\pi}^{\pi}l^2(\eta,\phi)\ud{\phi}=\frac{1}{2\pi}\ltnm{l}^2\nonumber.
\end{eqnarray}
Then $z$ can be rewritten along the characteristics as follows:
\begin{eqnarray}
 z(\eta,\phi)=\a[p]+\t [(\bar{z}+S)(\eta,\phi)].
\end{eqnarray}
Then by Lemma \ref{Milne lemma 3} and (\ref{Milne temp 61}), 
 for sufficiently small $\delta$, we can show that
\begin{eqnarray}
\lnnm{\a[p]} \leq  \lnm{p},
\end{eqnarray}
and
\begin{eqnarray}\label{Milne temp 63}
 \ltnm{ \t[\bar{z} +S]}&\leq & C(\delta)\bigg(\tnm{\bar{z}}+\tnnm{S}\bigg)+\delta\bigg(\lnm{\bar z}+\lnnm{S}\bigg)\\
&\leq& C(\delta)\bigg(\tnnm{z}+\tnnm{S}\bigg)+\delta\bigg(\ltnm{z}+\lnnm{S}\bigg)\nonumber.
\end{eqnarray}
We can directly get
\begin{eqnarray}
\ltnm{z}&\leq& 2\pi \lnnm{\a[p]}+\ltnm{\t[\bar{z}+S]}\\
&\leq & 2\pi \lnm{p} + C(\delta)\bigg(\tnnm{z}+\tnnm{S}\bigg)+\delta\bigg(\ltnm{z}+\lnnm{S}\bigg).\nonumber
\end{eqnarray}
By taking $\delta=1/2$, there exists a constant $C$ independent of $\delta$ such that
\begin{eqnarray}\label{Milne temp 64}
\ltnm{z}\leq
C\bigg(\lnm{p}+\tnnm{z}+\tnnm{S}+\lnnm{S}\bigg).
\end{eqnarray}
Therefore, based on Lemma \ref{Milne lemma 2}, (\ref{Milne temp
41.}), (\ref{Milne temp 61}) and (\ref{Milne temp 64}), we can
achieve
\begin{eqnarray}
\lnnm{z} &\leq& \lnm{\a[p]}+\lnnm{\t[\bar z +S]}\\
&\leq& C\bigg(\lnm{p}+\lnm{\bar z}+\lnnm{S}\bigg)\nonumber\\
&\leq& C\bigg(\lnm{p}+\lnnm{S}+\ltnm{z}\bigg)\nonumber\\
&\leq& C\bigg(\lnm{p}+\lnnm{S}+\tnnm{S}+\tnnm{z}\bigg)\nonumber.
\end{eqnarray}
where $C$ is independent of $\l$.\\
Since $\lnm{p}$, $\tnnm{S}$, $\lnnm{S}$ and $\tnnm{z}$ are finite, we
can yield that $z$ satisfies (\ref{Milne T34}). Then the proof of Lemma \ref{Milne lemma 4} is completed.
\end{proof}
Combining Lemma \ref{Milne lemma 4} and Theorem \ref{Milne lemma
6}, we deduce the main theorem.
\begin{theorem}\label{Milne theorem 1}
There exists a unique solution $f(\eta,\phi)$ to the $\e$-Milne
problem (\ref{Milne problem}) satisfying
\begin{eqnarray}
\lnnm{f-f_{\infty}}\leq C\bigg(1+M+\frac{M}{K}\bigg).
\end{eqnarray}
\end{theorem}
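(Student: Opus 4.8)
The plan is to obtain the $L^{\infty}$ bound by bootstrapping off the $L^2$ theory already in hand, with the pointwise inequality of Lemma \ref{Milne lemma 4} serving as the bridge. First I would invoke Theorem \ref{Milne lemma 6}, which produces a solution $f$ of the $\e$-Milne problem (\ref{Milne problem}) together with a well-defined limiting constant $f_{\infty}\in\r$ obeying the two quantitative $L^2$ bounds
\[
\tnnm{f-f_{\infty}}\leq C\bigg(1+M+\frac{M}{K}\bigg),\qquad \abs{f_{\infty}}\leq C\bigg(1+M+\frac{M}{K}\bigg).
\]
In particular, setting $z=f-f_{\infty}$, we have $\tnnm{z}<\infty$ with the displayed control, and $z$ solves the shifted problem (\ref{difference equation}) with boundary datum $p=h-f_{\infty}$.

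Next I would apply Lemma \ref{Milne lemma 4} to $z$. The standing hypotheses (\ref{Milne bounded}) and (\ref{Milne decay}) on $S$ hold, and the boundary datum satisfies $\lnm{p}\leq M+\abs{f_{\infty}}\leq C(1+M+\frac{M}{K})$ by the second bound above, so the lemma yields
\[
\lnnm{z}\leq C\bigg(1+M+\frac{M}{K}+\tnnm{z}\bigg).
\]
Substituting the $L^2$ bound $\tnnm{z}\leq C(1+M+\frac{M}{K})$ from the previous step into the right-hand side absorbs the $\tnnm{z}$ term and collapses the estimate to
\[
\lnnm{f-f_{\infty}}=\lnnm{z}\leq C\bigg(1+M+\frac{M}{K}\bigg),
\]
which is precisely the claimed inequality.

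For uniqueness I would observe that any $L^{\infty}$ solution with the prescribed far-field limit is automatically an admissible $L^2$ solution in the sense of Theorem \ref{Milne lemma 6}: the uniform bound on $f-f_{\infty}$ combined with the exponential decay of $S$ forces $\tnnm{f-f_{\infty}}<\infty$, so the uniqueness clause of Theorem \ref{Milne lemma 6} applies and pins down both the solution and the constant $f_{\infty}$.

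I do not anticipate a genuine analytic obstacle, since all the hard work is packaged in the two cited results; the only point demanding care is the compatibility of the two solution notions—namely checking that the $L^2$ solution supplied by Theorem \ref{Milne lemma 6} is the very object to which the pointwise bootstrap of Lemma \ref{Milne lemma 4} applies, and confirming that $\tnnm{z}$ is genuinely finite so that the right-hand side of the bootstrap inequality is meaningful. Once this bookkeeping is in place, the combination is immediate.
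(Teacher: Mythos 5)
Your proposal is correct and follows exactly the route the paper takes: combine the $L^2$ well-posedness and bound of Theorem \ref{Milne lemma 6} with the pointwise bootstrap inequality of Lemma \ref{Milne lemma 4}, then absorb $\tnnm{z}$ using the $L^2$ bound. The extra bookkeeping you flag (finiteness of $\tnnm{z}$, the bound $\lnm{p}\leq M+\abs{f_{\infty}}$, and uniqueness via the $L^2$ uniqueness clause) is exactly what the paper leaves implicit.
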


\subsection{Exponential Decay}

In this section, we prove the spatial decay of the solution to the
Milne problem.
\begin{theorem}\label{Milne theorem 2}
Assume (\ref{Milne bounded}) and (\ref{Milne decay}) hold. For
$K_0>0$ sufficiently small, the solution $f(\eta,\phi)$ to the
$\e$-Milne problem (\ref{Milne problem}) satisfies
\begin{eqnarray}\label{Milne temp 85}
\lnnm{\ue^{K_0\eta}(f-f_{\infty})}\leq C\bigg(1+M+\frac{M}{K}\bigg),
\end{eqnarray}
\end{theorem}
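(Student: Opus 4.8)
The plan is to upgrade the uniform bound of Theorem \ref{Milne theorem 1} to an exponentially weighted one by studying $w:=\ue^{K_0\eta}(f-f_{\infty})=\ue^{K_0\eta}z$, where $z=f-f_{\infty}$ solves (\ref{difference equation}). Since $\ue^{K_0\eta}\bar z=\bar w$ and $\sin\phi\,\p_\eta(\ue^{-K_0\eta}w)=\ue^{-K_0\eta}(\sin\phi\,\p_\eta w-K_0\sin\phi\,w)$, a direct computation shows $w$ solves the same $\e$-Milne equation as $z$ but with right-hand side $\bar w+K_0\sin\phi\,w+\ue^{K_0\eta}S$. The two new features are the genuinely small perturbation $K_0\sin\phi\,w$ and the still-decaying source $\ue^{K_0\eta}S$, the latter admissible precisely when $K_0<K$, since then $\abs{\ue^{K_0\eta}S}\le M\ue^{-(K-K_0)\eta}$ stays in $L^1\cap L^2\cap L^\infty$. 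I fix $K_0<\min\{K,1\}$ small throughout.

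First I would establish the weighted $L^2$ bound $\tnnm{w}\le C(1+M+M/K)$. Multiplying (\ref{difference equation}) by $\ue^{2K_0\eta}z$ and integrating in $\phi$ gives, with $\alpha(\eta)=\half\br{z,z\sin\phi}_\phi$ and $r=z-\bar z$, the identity $\tfrac{\ud{}}{\ud{\eta}}(\ue^{2K_0\eta}\alpha)=\ue^{2K_0\eta}\big(2K_0\alpha+F\alpha-\tnm{r}^2+\br{S,z}_\phi\big)$. Working on the finite slab $[0,L]$ with specular reflection, which forces $\alpha(L)=0$, and integrating, the two dangerous terms $2K_0\int\ue^{2K_0\eta}\alpha$ and $\int F\ue^{2K_0\eta}\alpha$ appear with the small prefactors $K_0$ and $\sup\abs{F}\le\e/R$, because $\abs{\alpha}\le\half\tnm{z}^2$ so $\ue^{2K_0\eta}\abs{\alpha}\le\half\tnm{w}^2$. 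Coupling this with the weighted version of the fluid estimate (\ref{Milne temp 42}) (here $q_{\infty}=0$), which controls $\int\ue^{2K_0\eta}\abs{q}^2$ by $\int\ue^{2K_0\eta}\tnm{r}^2$ plus source using $F\in L^1\cap L^2$ and the decay of $\ue^{K_0\eta}S$, and handling the cross terms by Young's inequality with a small parameter, the coefficient of $\int\ue^{2K_0\eta}\tnm{r}^2$ on the right stays below $1$ for $K_0,\e$ small. This closes the estimate uniformly in $L$, and $L\rt\infty$ yields the claimed bound.

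Next I would promote this to the weighted $L^\infty$ bound. The operators $\a,\t$ of (\ref{Milne temp 32})--(\ref{Milne temp 34}) interact well with the weight: since $\abs{G_{\eta,\eta'}}\ge\abs{\eta-\eta'}$, writing $\ue^{K_0\eta}=\ue^{K_0\eta'}\ue^{K_0(\eta-\eta')}$ only replaces the kernel decay rate $1$ by $1-K_0>0$, so for $K_0<1$ one has $\ue^{K_0\eta}\a[p]\ls\lnm{p}$ together with the weighted analogues $\lnnm{\ue^{K_0\eta}\t[H]}\le C\lnnm{\ue^{K_0\eta}H}$ and $\ltnm{\ue^{K_0\eta}\t[H]}\le C(\delta)\tnnm{\ue^{K_0\eta}H}+\delta\lnnm{\ue^{K_0\eta}H}$ of Lemmas \ref{Milne lemma 2} and \ref{Milne lemma 3}. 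Writing $w=\ue^{K_0\eta}\a[p]+\ue^{K_0\eta}\t[\bar z+S]$ and noting $\ue^{K_0\eta}(\bar z+S)=\bar w+\ue^{K_0\eta}S$, I run the bootstrap of Lemma \ref{Milne lemma 4}: the weighted Lemma \ref{Milne lemma 3}, together with $\lnm{\bar w}\le(2\pi)^{-1/2}\ltnm{w}$ and $\tnnm{\bar w}\le\tnnm{w}$ from (\ref{Milne temp 61}), gives $\ltnm{w}\le C(\lnm{p}+\tnnm{w}+\tnnm{\ue^{K_0\eta}S}+\lnnm{\ue^{K_0\eta}S})$, and then the weighted Lemma \ref{Milne lemma 2} gives $\lnnm{w}\le C(1+M+M/K+\tnnm{w})$; invoking Step 1 finishes the proof.

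The main obstacle is the weighted $L^2$ estimate of Step 1. The naive move of rewriting the weighted energy identity for $z$ as an unweighted one for $w$ via an integrating factor fails, because the extra potential $+2K_0\eta$ exactly cancels the gain and returns the original unweighted estimate; the exponential growth of the weight over the $O(1/\e)$-long support of $F$ is precisely what makes this delicate. The resolution is to keep the weight explicit, exploit that both the weight-derivative term and the geometric term $F$ carry small prefactors ($K_0$ and $\e/R$), and genuinely couple the microscopic $r$-energy estimate with the macroscopic $q$-estimate, closing them as a $2\times2$ system whose total contraction constant is $<1$ once $K_0,\e$ are small and $K_0<K$.
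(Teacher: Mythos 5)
Your overall architecture is exactly the paper's: set $Z=\ue^{K_0\eta}z$, note it solves the same $\e$-Milne equation with right-hand side $\bar Z+\ue^{K_0\eta}S+K_0\sin\phi\,Z$, prove a weighted $L^2$ bound first, then rerun the $\a,\t$ bootstrap of Lemma \ref{Milne lemma 4} with the weighted versions of Lemmas \ref{Milne lemma 1}--\ref{Milne lemma 3}, absorbing the $K_0\lnnm{Z}$ term for $K_0$ small. Your Step 2 is essentially identical to the paper's Step 2, including the requirement $K_0<K$ so that $\ue^{K_0\eta}S$ still decays.

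The one substantive deviation is in the weighted $L^2$ step, and as written it has a real wrinkle. You bound $\abs{\alpha}\le\half\tnm{z}^2$, which drags the fluid part $q$ into the term $2K_0\int\ue^{2K_0\eta}\alpha$, and you then propose to close a $2\times2$ system on the finite slab $[0,L]$ using ``the weighted version of (\ref{Milne temp 42})''. But (\ref{Milne temp 42}) is the infinite-slab fluid estimate; the finite-slab analogue is (\ref{Milne temp 2}), whose right-hand side grows like $1+\eta^{1/2}$, so $\int_0^L\ue^{2K_0\eta}\abs{q^L}^2$ is not controlled uniformly in $L$ and the coupling does not close before passing to the limit. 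The paper sidesteps this entirely with the orthogonality relation (\ref{Milne temp 3}), $\br{\sin\phi,f}_{\phi}(\eta)=0$, which gives $\br{f,f\sin\phi}_{\phi}=\br{r,r\sin\phi}_{\phi}$, i.e.\ $\alpha$ is purely microscopic; then $\abs{2K_0\alpha}\le K_0\tnm{r}^2$ and $\abs{F\alpha}\le\half\abs{F}\tnm{r}^2$ are absorbed directly by the dissipation $-\tnm{r}^2$ once $K_0<1/2$ (this is (\ref{Milne temp 16})), with no coupling to $q$ needed. The weighted bound on $q-q_{\infty}$ is then recovered \emph{afterwards} from the representation (\ref{Milne temp 9}) together with $F\in L^1\cap L^2$ and the decay of $\ue^{K_0\eta}S$, exactly as you sketch. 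So your plan is salvageable either by invoking the orthogonality to purge $q$ from $\alpha$, or by performing the weighted energy identity directly on the infinite-slab solution where (\ref{Milne temp 42}) is legitimately available; as stated, mixing the finite-slab energy identity with the infinite-slab fluid estimate is the gap.
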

\begin{proof}
Define $Z=\ue^{K_0\eta}z$ for $z=f-f_{\infty}$.
We divide the analysis into several steps:\\
\ \\
Step 1: \\
We firstly obtain the weighted $\tnnm{\cdot}$ estimate of the difference $f-f_{\infty}$.  That is, there is some constant $K_0$ is small enough such that
\begin{eqnarray}\label{Milne temp 72}
\tnnm{Z}^2=\int_0^{\infty}\ue^{2K_0\eta}\int_{-\pi}^{\pi}(f(\eta,\phi)-f_{\infty})^2\ud{\phi}\ud{\eta}\leq
C\bigg(1+M+\frac{M}{K}\bigg)^2.
\end{eqnarray}
As a consequence, it also holds that
\begin{eqnarray}\label{Milne temp 76}
\lnm{\ue^{K_0\eta}(q-q_{\infty})}\leq
C\bigg(1+M+\frac{M}{K}\bigg).
\end{eqnarray}
\emph{The proof of (\ref{Milne temp 72}):} The orthogonal property
(\ref{Milne temp 3}) reveals
\begin{eqnarray}
\br{f,f\sin\phi}_{\phi}(\eta)=\br{r,r\sin\phi}_{\phi}(\eta).
\end{eqnarray}
Multiplying $\ue^{2K_0\eta}f$ on both sides of equation (\ref{Milne
problem}) and integrating over $\phi\in[-\pi,\pi)$, we obtain
\begin{eqnarray}\label{Milne temp 15}
\ue^{2K_0\eta}\br{S,f}_{\phi}(\eta)&=& \half\frac{\ud{}}{\ud{\eta}}\bigg(\ue^{2K_0\eta}\br{r,r\sin\phi}_{\phi}(\eta)\bigg)
-\half
F(\eta)\bigg(\ue^{2K_0\eta}\br{r,r\sin\phi}_{\phi}(\eta)\bigg)
\\
& & -\ue^{2K_0\eta}\bigg(K_0\br{r,r\sin\phi}_{\phi}(\eta)-\br{r,
r}_{\phi}(\eta)\bigg) \nonumber.
\end{eqnarray}
For $K_0< 1/2$, we have
\begin{eqnarray}\label{Milne temp 16}
\half\tnm{r(\eta,\cdot)}^2 \leq -K_0\br{r,r\sin\phi}_{\phi}(\eta)+\br{r,r}_{\phi}(\eta)\leq
\frac{3}{2}\tnm{r(\eta,\cdot)}^2.
\end{eqnarray}
Let $K_0 <\min\{1/2,K\}$, similar to the proof of Lemma \ref{Milne finite LT} and Lemma
\ref{Milne infinite LT}, formulas (\ref{Milne temp 15}) and
(\ref{Milne temp 16}) imply
\begin{eqnarray}\label{Milne temp 71}
\tnnm{\ue^{K_0\eta}r}^2=\int_0^{\infty}\ue^{2K_0\eta}\br{r,r}_{\phi}(\eta)\ud{\eta}\leq
C\bigg(1+M+\frac{M}{K}\bigg)^2.
\end{eqnarray}
From (\ref{Milne temp 71}) and Cauchy's inequality, noticing $q_{\infty}=f_{\infty}$, we can deduce
\begin{eqnarray}
&&\int_0^{\infty}\ue^{2K_0\eta}\bigg(\int_{-\pi}^{\pi}(f(\eta,\phi)-f_{\infty})^2\ud{\phi}\bigg)\ud{\eta}\\
&&\leq \int_0^{\infty}\ue^{2K_0\eta}\bigg(\int_{-\pi}^{\pi}r^2(\eta,\phi)\ud{\phi}\bigg)\ud{\eta}+
\int_0^{\infty}\ue^{2K_0\eta}\bigg(\int_{-\pi}^{\pi}(q(\eta)-q_{\infty})^2\ud{\phi}\bigg)\ud{\eta}
\nonumber\\
&&\leq \int_0^{\infty}\ue^{2K_0\eta}\tnm{r(\eta,\cdot)}^2\ud{\eta}\nonumber\\
&&\hspace{3mm}+\int_0^{\infty}\ue^{2K_0\eta}\bigg(\int_{\eta}^{\infty}\abs{F(y)}\tnm{r(y,\cdot)}\ud{y}\bigg)^2\ud{\eta}
+\int_0^{\infty}\ue^{2K_0\eta}\bigg(\int_{\eta}^{\infty}\lnm{S(y,\cdot)}\ud{y}\bigg)^2\ud{\eta}
\nonumber\\
&&\leq C\bigg(1+M+\frac{M}{K}\bigg)^2+C\bigg(\int_0^{\infty}\ue^{2K_0\eta}\tnm{r(\eta,\cdot)}^2\ud{\eta}\bigg)
\bigg(\int_0^{\infty}\int_{\eta}^{\infty}\ue^{2K_0(\eta-y)}F^2(y)\ud{y}\ud{\eta}\bigg)\nonumber\\
&&\hspace{3mm}
+\int_0^{\infty}\ue^{2K_0\eta}\bigg(\int_{\eta}^{\infty}\lnm{S(y,\cdot)}\ud{y}\bigg)^2\ud{\eta}
\nonumber\\
&&\leq C\bigg(1+M+\frac{M}{K}\bigg)^2+C\bigg(\int_0^{\infty}\ue^{2K_0\eta}\tnm{r(\eta)}^2\ud{\eta}\bigg)
\bigg(\int_0^{\infty}F^2(y)\ud{y}\bigg)
\nonumber\\
&&\hspace{3mm} +\int_0^{\infty}\ue^{2K_0\eta}\bigg(\int_{\eta}^{\infty}\lnm{S(y,\cdot)}\ud{y}\bigg)^2\ud{\eta}
\nonumber\\
&&\leq C\bigg(1+M+\frac{M}{K}\bigg)^2\nonumber.
\end{eqnarray}
This completes the proof of (\ref{Milne temp 72}). From the proof of Lemma \ref{Milne infinite LT}, one gets that
\begin{eqnarray}
&&\lnm{\ue^{K_0\eta}(q-q_{\infty})}\no\\
&&\leq\lnm{\ue^{2K_0\eta}\int_{\eta}^{\infty}\abs{F(y)}\tnm{r(y,\cdot)}\ud{y}}+\lnm{\ue^{2K_0\eta}\int_{\eta}^{\infty}\lnm{S(y,\cdot)}\ud{y}}\no\\
&&\leq\lnm{\bigg(\int_{\eta}^{\infty}F^2(y)\ud{y}\bigg)^{\half}\bigg(\int_{\eta}^{\infty}\ue^{2K_0y}\tnm{r(y,\cdot)}^2\ud{y}
\bigg)^{\half}}
+\lnm{\int_{\eta}^{\infty}\ue^{2K_0y}\lnm{S(y,\cdot)}\ud{y}}\no\\
&&\leq C\bigg(1+M+\frac{M}{K}\bigg)\nonumber.
\end{eqnarray}
This shows (\ref{Milne temp 76}) when $\bar{S}=0$. Noting that all the
auxiliary functions constructed in Lemma \ref{Milne infinite LT general} satisfy the estimates (\ref{Milne temp 72}) and (\ref{Milne temp 76}), then we can extend
above $L^2$ estimates to the general $S$ case by the method
introduced in Lemma \ref{Milne infinite LT general}. \\
\ \\
Step 2: \\
We consider the decay rate of $f-f_{\infty}$ w.r.t. the spatial variable $\eta$. By a similar argument as before, we can easily show that there exists $K_0$ small enough such that
\begin{eqnarray}
\lnm{\ue^{K_0\eta}\a[p]}\leq
\lnm{p} \label{Milne temp 52}.
\end{eqnarray}
and the integral operator $\t$ satisfies
\begin{eqnarray}\label{Milne temp 54}
\lnnm{\ue^{K_0\eta}\t[H]}\leq C\lnnm{H},
\end{eqnarray}
where $C$ is a universal constant independent $H$. With these estimates and Lemma \ref{Milne lemma 3}, we will
obtain
\begin{eqnarray}\label{Milne temp 75}
\lnnm{Z}\leq C\bigg(1+M+\frac{M}{K}+\tnnm{Z}\bigg).
\end{eqnarray}
\emph{Proof of (\ref{Milne temp 75}):} $Z$ satisfies the equation
\begin{eqnarray}\label{decay equation}
\left\{
\begin{array}{rcl}\displaystyle
\sin\phi\frac{\p Z}{\p\eta}+F(\eta)\cos\phi\frac{\p
Z}{\p\phi}+Z&=&\bar Z+\ue^{K_0\eta}S+K_0\sin\phi Z,\\
Z(0,\phi)&=&p(\phi)=h(\phi)-f_{\infty}\ \ \text{for}\ \ \sin\phi>0.
\end{array}
\right.
\end{eqnarray}
Since we know
\begin{eqnarray}
Z=\a[p]+\t[\bar Z+\ue^{K_0\eta}S+K_0\sin\phi Z],
\end{eqnarray}
then by Lemma \ref{Milne lemma 3}, (\ref{Milne temp 61}), we can show
\begin{eqnarray}\label{Milne temp 7}
&&\ltnm{\t[\bar Z+\ue^{K_0\eta}S+K_0\sin\phi Z]}\\
&&\leq C(\delta)\bigg(\tnnm{\bar
Z+ \ue^{K_0\eta}S+K_0 Z}\bigg)
+\delta\bigg(\lnnm{\bar Z+ \ue^{K_0\eta}S+K_0 Z}\bigg)\nonumber\\
&&\leq
C(\delta)\bigg(\tnnm{Z}+\tnnm{\ue^{K_0\eta}S}+K_0\tnnm{Z}\bigg)\no\\
&& \qquad\qquad +\delta\bigg(\ltnm{Z}+\lnnm{\ue^{K_0\eta}S}+K_0\lnnm{Z}\bigg)\nonumber.
\end{eqnarray}
Therefore, based on Lemma \ref{Milne lemma 1} and (\ref{Milne temp
63}), we can directly estimate
\begin{eqnarray}
\ltnm{Z}
&\leq& 2 \pi \lnm{\a[p]}+\ltnm{\t[\bar Z+\ue^{K_0\eta}S+K_0\sin\phi Z]}\nonumber\\
&\leq& 2\pi\lnm{p}+C(\delta)\bigg(\tnnm{Z}+\tnnm{\ue^{K_0\eta}S}\bigg)\no\\
&&+\delta\bigg(\ltnm{Z}+\lnnm{\ue^{K_0\eta}S}+K_0\lnnm{Z}\bigg)\nonumber.
\end{eqnarray}
By taking $\delta=1/2$, we obtain
\begin{eqnarray}\label{Milne temp 74}
 \ltnm{Z}\leq
C\bigg(\lnm{p}+\tnnm{\ue^{K_0\eta}S}+\lnnm{\ue^{K_0\eta}S}+\tnnm{Z}+K_0\lnnm{Z}\bigg).
\end{eqnarray}
Then based on Lemma \ref{Milne lemma 1}, Lemma \ref{Milne lemma 2}, Lemma \ref{Milne lemma 3} and (\ref{Milne temp 76}), we can deduce
\begin{eqnarray}
\lnnm{Z}
&&\leq \lnnm{\ue^{K_0\eta}\a[p]}+\lnnm{\t[\bar Z+ e^{K_0\eta}S +K_0 Z]}\\
&&\leq \lnm{p}+\lnnm{\bar Z} +\lnnm{\ue^{K_0\eta}S}+K_0 \lnnm{ Z}\nonumber\\
&&\leq \lnm{p}+ \ltnm{Z}+\lnnm{ \u e^{K_0\eta}S}+K_0 \lnnm{Z}\nonumber\\
&&\leq C(1+M+\frac{M}{K}) + C\bigg(\tnnm{Z}+K_0\lnnm{Z}\bigg)\nonumber.
\end{eqnarray}
Taking $K_0$ sufficiently small, this completes the proof of
(\ref{Milne temp 75}).\\
\ \\
Combining (\ref{Milne temp 72}) and (\ref{Milne temp 75}), we deduce
(\ref{Milne temp 85}).
\end{proof}

\subsection{Maximum Principle}

\begin{theorem}\label{Milne theorem 3}
The solution $f(\eta,\phi)$ to the $\e$-Milne problem (\ref{Milne
problem}) with $S=0$ satisfies the maximum principle, i.e.
\begin{eqnarray}
\min_{\sin\phi>0}h(\phi)\leq f(\eta,\phi)\leq
\max_{\sin\phi>0}h(\phi).
\end{eqnarray}
\end{theorem}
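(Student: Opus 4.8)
The plan is to exploit the characteristic representation of Section 3.2 specialized to $S=0$. Since the transport-form source is then exactly $\bar f$, the unique solution of (\ref{Milne problem}) is the fixed point of the identity $f=\a[h]+\t[\bar f]$, where $\a$ and $\t$ are the operators in (\ref{Milne temp 32})--(\ref{Milne temp 34}) obtained as the $\l=0$ limit in Lemma \ref{Milne infinite LT generalB}. First I would record two structural facts. The kernels are \emph{nonnegative}: every factor $1/\sin(\phi'(\phi,\eta,\eta'))$ appearing in $\t$ is positive on its range of integration and $\exp(\pm G_{\eta,\eta'})>0$. The operators are \emph{mass-normalized}, i.e. $\a[\mathbf 1]+\t[\mathbf 1]=\mathbf 1$. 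The normalization follows most cleanly from uniqueness: a constant solves (\ref{Milne problem}) with $S=0$ and constant inflow data, so its characteristic representation forces $\a[\mathbf 1]+\t[\mathbf 1]=\mathbf 1$ pointwise; equivalently one integrates $\frac{\p}{\p\eta'}\exp(-G_{\eta,\eta'})=\frac{1}{\sin(\phi'(\phi,\eta,\eta'))}\exp(-G_{\eta,\eta'})$ and its $\sin\phi<0$ analogue to see that each case has total weight one.

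Granting these, $\a[h]+\t[\bar g]$ is, for every bounded $g$, a genuine probability average of the admissible boundary values $\{h(\phi_\ast):\sin\phi_\ast>0\}$ (entering only through $\a$ via $\phi_\ast=\phi'(\phi,\eta,0)$, for which $\sin\phi_\ast>0$ in Case I) and of the slice averages $\{\bar g(\eta'):\eta'\ge0\}$. Hence, with $m=\min_{\sin\phi>0}h$ and $M=\max_{\sin\phi>0}h$, the affine map $\Lambda g:=\a[h]+\t[\bar g]$ preserves the order interval $\mathcal K=\{m\le g\le M\}$: if $m\le g\le M$ then $m\le\bar g\le M$, and the averaging property yields $m\le\Lambda g\le M$. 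I would then conclude in one of two equivalent ways: either run the iteration $g_0\equiv M$, $g_{k+1}=\Lambda g_k$, which stays in $\mathcal K$ by induction and, using the $L^2$ and exponential-decay estimates of Lemmas \ref{Milne infinite LT}, \ref{Milne lemma 4} and Theorem \ref{Milne theorem 2} to extract an a.e.-convergent subsequence, passes the bound to its limit; or observe that $\Lambda$ maps the convex, weak-$\ast$ compact set $\mathcal K$ into itself, so it has a fixed point in $\mathcal K$, which must coincide with $f$ by the uniqueness already established. Either route gives $m\le f\le M$, which is the claim.

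The step I expect to be the main obstacle is the nonnegativity and mass-normalization in the grazing/reflection Case II ($\sin\phi>0$, $\abs{E(\eta,\phi)}\ge1$), where the characteristic turns at $\eta^+$ and the representation (\ref{Milne temp 33}) splits into a reflected branch over $[\eta^+,\infty)$ and a direct branch over $[\eta^+,\eta]$. One must check that these recombine to total weight exactly one: the grazing value $f(\eta^+,\phi_0)$, itself a weight-one average along the $\sin\phi<0$ characteristic, is transported back with factor $\exp(-G_{\eta,\eta^+})$, while the direct branch contributes the complementary weight $1-\exp(-G_{\eta,\eta^+})$; and the singularity of $1/\sin\phi'$ near $\sin\phi'=0$ must be seen to be integrable, which is precisely the point already controlled by the exponential weights in Lemma \ref{Milne lemma 2}. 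Once Case II is settled, Cases I and III are immediate. As an alternative to the fixed-point argument one may reason by contradiction: if $\sup f>M$, the averaging identity forces the boundary weight $\exp(-G_{\eta,0})$ to vanish along a maximizing sequence and the bulk averages $\bar f$ to approach $\sup f$, which in view of the exponential convergence $f\to f_{\infty}$ from Theorem \ref{Milne theorem 2} is incompatible with $\sup f$ exceeding the range of the data.
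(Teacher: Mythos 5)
Your structural observations are correct and well targeted: the kernels of $\a$ and $\t$ are nonnegative, the total weight is exactly one in each of the three cases (including the delicate Case II, where the reflected branch carries weight $\exp(-G_{\eta,\eta^+})$ times the weight-one average at the turning point and the direct branch carries the complementary $1-\exp(-G_{\eta,\eta^+})$), and consequently the affine map $\Lambda g=\a[h]+\t[\bar g]$ preserves the order interval $\mathcal K=\{m\leq g\leq M\}$. The gap is in the last step, where you identify the solution $f$ of (\ref{Milne problem}) with an element of $\mathcal K$. Precisely because the mass of $\t$ equals $1-\a[\mathbf 1]$, and $\a[\mathbf 1]=0$ in Cases II and III, the operator $g\mapsto\t[\bar g]$ has $L^\infty$ operator norm exactly $1$: $\Lambda$ is \emph{not} a contraction, so the iteration $g_{k+1}=\Lambda g_k$ from $g_0\equiv M$ is only guaranteed (by monotonicity) to decrease to \emph{some} fixed point in $\mathcal K$, and Markov--Kakutani likewise only produces \emph{a} fixed point in $\mathcal K$. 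To conclude you must know that $f$ is the \emph{only} bounded fixed point of the integral equation, i.e.\ that $d=\t[\bar d]$ has no nonzero solution in $L^\infty$. The uniqueness you invoke (Lemma \ref{Milne infinite LT}, Theorem \ref{Milne lemma 6}) is proved only in the class $\tnnm{f-f_\infty}<\infty$, and a bounded fixed point of the integral equation is not known a priori to lie in that class; nothing in the paper supplies the missing statement. The closing contradiction argument has the same hole: on a maximizing sequence with vanishing boundary weight one only gets $\sup f\leq\sup\bar f\leq\sup f$, which is vacuous without a quantitative strong-maximum-principle input.

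The paper avoids this issue entirely by proving positivity at the level of the approximating problems rather than at the limit. It first reduces the claim to sign preservation ($h\leq 0\Rightarrow f\leq 0$, applied to $f-M$ and $m-f$), then observes that the iterates $f^L_m$ of the \emph{penalized finite-slab} problem (\ref{Milne finite problem LT penalty_}) are nonpositive term by term along characteristics — the same kernel-positivity fact you use, but for a scheme that is an honest contraction thanks to the factor $\frac{1}{1+\l}$ — so the sign passes to $f^L_\l$, and then survives the weak limits $\l\to 0$ and $L\to\infty$ because these are the very limits that define $f$ and the cone of nonpositive functions is weakly closed. The natural repair of your argument is to run your order-interval computation on the penalized finite-slab representation (Cases I--IV of Lemma \ref{Milne infinite LT generalA}), where uniqueness of the fixed point is free; but that is essentially the paper's proof.
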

\begin{proof}
We claim it is sufficient to show $f(\eta,\phi)\leq0$ whenever
$h(\phi)\leq0$. Suppose this claim is justified. Denote
$m=\min_{\sin\phi>0}h(\phi)$ and $M=\max_{\sin\phi>0}h(\phi)$. Then
$f^1=f-M$ satisfies the equation
\begin{eqnarray}
\left\{
\begin{array}{rcl}\displaystyle
\sin\phi\frac{\p f^1}{\p\eta}-F(\eta)\cos\phi\frac{\p
f^1}{\p\phi}+f^1-\bar f^1&=&0,\\
f^1(0,\phi)&=&h(\phi)-M\ \ \text{for}\ \ \sin\phi>0,\\
\lim_{\eta\rt\infty}f^1(\eta,\phi)&=&f^1_{\infty}.
\end{array}
\right.
\end{eqnarray}
Hence, $h-M\leq0$ implies $f^1\leq0$ which is actually $f\leq M$. On
the other hand, $f^2=m-f$ satisfies the equation
\begin{eqnarray}
\left\{
\begin{array}{rcl}\displaystyle
\sin\phi\frac{\p f^2}{\p\eta}-F(\eta)\cos\phi\frac{\p
f^2}{\p\phi}+f^2-\bar f^2&=&0,\\
f^2(0,\phi)&=&m-h(\phi)\ \ \text{for}\ \ \sin\phi>0,\\
\lim_{\eta\rt\infty}f^2(\eta,\phi)&=&f^2_{\infty}.
\end{array}
\right.
\end{eqnarray}
Thus, $m-h\leq0$ implies $f^2\leq0$ which further leads to $f\geq
m$. Therefore, the maximum principle is established.\\
\ \\
We now prove if $h(\phi)\leq0$, we have $f(\eta,\phi)\leq0$.
We divide the proof into several steps:\\
\ \\
Step 1: Penalized $\e$-Milne problem in a finite slab.\\
Assuming $h(\phi)\leq0$, we then consider the penalized Milne
problem for $f^L_{\l}(\eta,\phi)$ in the finite slab
$(\eta,\phi)\in[0,L]\times[-\pi,\pi)$
\begin{eqnarray}\label{Milne finite problem LT penalty_}
\left\{
\begin{array}{rcl}\displaystyle
\l f_{\l}^{L}+\sin\phi\frac{\p
f_{\l}^{L}}{\p\eta}-F(\eta)\cos\phi\frac{\p
f_{\l}^{L}}{\p\phi}+f_{\l}^{L}-\bar f_{\l}^{L}&=&0,\\
f_{\l}^{L}(0,\phi)&=&h(\phi)\ \ \text{for}\ \ \sin\phi<0,\\
f_{\l}^{L}(L,\phi)&=&f_{\l}^{L}(L,R\phi).
\end{array}
\right.
\end{eqnarray}
In order to construct the solution of (\ref{Milne finite problem LT
penalty_}), we iteratively define the sequence
$\{f^{L}_{m}\}_{m=1}^{\infty}$ as $f^{L}_{0}=0$ and
\begin{eqnarray}
\left\{
\begin{array}{rcl}\displaystyle
\l f^{L}_{m}+\sin\phi\frac{\p
f^{L}_{m}}{\p\eta}-F(\eta)\cos\phi\frac{\p
f^{L}_{m}}{\p\phi}+f^{L}_{m}-\bar f^{L}_{m-1}&=&0,\\
f^{L}_{m}(0,\phi)&=&h(\phi)\ \ \text{for}\ \ \sin\phi<0,\\
f^{L}_{m}(L,\phi)&=&f^{L}_{m}(L,R\phi).
\end{array}
\right.
\end{eqnarray}
Along the characteristics, it is easy to see we always have
$f^{L}_{m}<0$. In the proof of Lemma \ref{Milne finite LT}, we have
shown $f^{L}_{m}$ converges strongly in
$L^{\infty}([0,L]\times[-\pi,\pi))$ to $f_{\l}^{L}$ which satisfies
(\ref{Milne finite problem LT penalty_}). Also, $f_{\l}^{L}$
satisfies
\begin{eqnarray}
\lnnm{f_{\l}^{L}}\leq \frac{1+\l}{\l}\lnm{h}.
\end{eqnarray}
Naturally, we obtain $f_{\l}^{L}\in L^2([0,L]\times[-\pi,\pi))$
and $f_{\l}^{L}\leq0$.\\
\ \\
Step 2: $\e$-Milne problem in a finite slab.\\
Consider the Milne problem for $f^{L}(\eta,\phi)$ in a finite slab
$(\eta,\phi)\in[0,L]\times[-\pi,\pi)$
\begin{eqnarray}\label{Milne finite problem LT_}
\left\{
\begin{array}{rcl}\displaystyle
\sin\phi\frac{\p f^L}{\p\eta}-F(\eta)\cos\phi\frac{\p
f^L}{\p\phi}+f^L-\bar f^L&=&0,\\
f^L(0,\phi)&=&h(\phi)\ \ \text{for}\ \ \sin\phi<0,\\
f^L(L,\phi)&=&f^L(L,R\phi).
\end{array}
\right.
\end{eqnarray}
In the proof of Lemma \ref{Milne finite LT}, we have shown
$f_{\l}^{L}$ is uniformly bounded in $L^{2}([0,L)\times[-\pi,\pi))$
with respect to $\l$, which implies we can take weakly convergent
subsequence $f_{\l}^{L}\rightharpoonup f^L$ as $\l\rt0$ with $f^L\in
L^2([0,L]\times[-\pi,\pi))$.
Naturally, we have $f^L(\eta,\phi)\leq0$.\\
\ \\
Step 3: $\e$-Milne problem in an infinite slab.\\
Finally, in the proof of Lemma \ref{Milne infinite LT}, by taking
$L\rt\infty$, we have
\begin{eqnarray}
f^L\rightharpoonup f\ \ in\ \ L^2_{loc}([0,L)\times[-\pi,\pi),
\end{eqnarray}
where $f$ satisfies (\ref{Milne problem}). Certainly, we have
$f(\eta,\phi)\leq0$. This justifies the claim in Step 1. Hence, we
complete the proof.
\end{proof}
\begin{remark}\label{Milne remark}
Note that when $F=0$, then all the previous proofs can be recovered
and Theorem \ref{Milne theorem 1}, Theorem \ref{Milne theorem 2} and
Theorem \ref{Milne theorem 3} still hold. Hence, we can deduce the
well-posedness, decay and maximum principle of the classical Milne
problem
\begin{eqnarray}\label{classical Milne problem}
\left\{
\begin{array}{rcl}\displaystyle
\sin\phi\frac{\p f}{\p\eta}+f-\bar f&=&S(\eta,\phi),\\
f(0,\phi)&=&h(\phi)\ \ \text{for}\ \ \sin\phi>0,\\
\lim_{\eta\rt\infty}f(\eta,\phi)&=&f_{\infty}.
\end{array}
\right.
\end{eqnarray}
\end{remark}

\section{Remainder Estimate}

In this section, we consider the remainder estimate of the equation
\begin{eqnarray}
\left\{ \begin{array}{rcl} \epsilon\vec w\cdot\nabla_xu+u-\bar
u&=&f(\vx,\vw)\ \ \text{in}\ \
\Omega\label{neutron},\\\rule{0ex}{1.0em} u(\vec x_0,\vec
w)&=&g(\vec x_0,\vec w)\ \ \text{for}\ \ \vec x_0\in\p\Omega\ \
\text{and}\ \vw\cdot\vec n<0.
\end{array}
\right.
\end{eqnarray}
Let the measurable functions be defined on a.e.
$\Omega\times \s^1 $. The Lebesgue spaces of measurable functions on
$\Omega \times \s^1$ are denoted by $L^p(\Omega \times \s^1), 1 \leq
p \leq \infty$. These spaces are complete with respect to the norm
\begin{eqnarray}
\nm{f}_{L^p(\Omega \times \s^1)} =
\bigg(\int_{\Omega}\int_{\s^1}\abs{f(\vx,\vw)}^p\ud{\vw}\ud{\vx}\bigg)^{1/p},
\end{eqnarray}
and $L^2(\Omega \times \s^1)$ is a Hilbert space with scalar product
\begin{eqnarray}
(f,g) = (f,g)_{L^2(\Omega \times \s^1)} = \int_{\Omega}\int_{\s^1} f
g \ud{\vw}\ud{\vx}.
\end{eqnarray}
In particular, the $L^2$ and $L^{\infty}$ norms are defined as
follows
\begin{eqnarray}
\nm{f}_{L^2(\Omega\times\s^1)}&=&\bigg(\int_{\Omega}\int_{\s^1}\abs{f(\vx,\vw)}^2\ud{\vw}\ud{\vx}\bigg)^{1/2},\\
\nm{f}_{L^{\infty}(\Omega\times\s^1)}&=&\sup_{(\vx,\vw)\in\Omega\times\s^1}\abs{f(\vx,\vw)}.
\end{eqnarray}
Let $\ud{s}$ be the Lebesgue measure on $\p\Omega$, then we consider
the trace spaces $L^p(\Gamma¡À)$ for $p\geq1$ endowed with the norm
\begin{eqnarray}
\nm{f}_{L^p(\Gamma)}&=&\left(\int_{\Gamma}\abs{f(\vx,\vw)}^p\ud{\xi}\right)^{1/2},\\
\nm{f}_{L^p(\Gamma^{\pm})}&=&\left(\int_{\Gamma^{\pm}}\abs{f(\vx,\vw)}^p\ud{\xi} \right)^{1/2}.
\end{eqnarray}
where $\ud{\xi} = \abs{\vw\cdot\vec n(x)}\ud{s} \ud{\vw}$. At the
same time, the $L^{\infty}$ norm of the function on the boundary is
defined as
\begin{eqnarray}
\nm{f}_{L^{\infty}(\Gamma)}&=&\sup_{(\vx,\vw)\in\Gamma}\abs{f(\vx,\vw)},\\
\nm{f}_{L^{\infty}(\Gamma^{\pm})}&=&\sup_{(\vx,\vw)\in\Gamma^{\pm}}\abs{f(\vx,\vw)}.
\end{eqnarray}
In what follows, we would show the well-posedness of the solution to
(\ref{neutron}).

\subsection{Preliminaries}

In order to show the $L^2$ and $L^{\infty}$ estimates of the
equation (\ref{neutron}), we start with some preparations with the
penalized neutron transport equation.
\begin{lemma}\label{well-posedness lemma 1}
Assume $f(\vx,\vw)\in L^{\infty}(\Omega\times\s^1)$ and $g(\vec
x_0,\vw)\in L^{\infty}(\Gamma)$ with $\vec x_0 \in \partial \Omega$.
Then, for any $\lambda>0$ and $\epsilon$, there exists a solution
$u_{\l}(\vx,\vw)\in L^{\infty}(\Omega\times\s^1)$ of the penalized
transport equation
\begin{eqnarray}\label{penalty equation}
\left\{
\begin{array}{rcl}
\lambda u_{\l}+\epsilon\vec
w\cdot\nabla_xu_{\l}+u_{\l}&=&f(\vx,\vw)\ \ \text{in}\ \ \Omega
\times \s^1,\\\rule{0ex}{1.0em} u_{\l}&=&g\  \text{on}\ \
\partial\Omega \times \s^1, \ \vw\cdot\vec n<0,
\end{array}
\right.
\end{eqnarray}
which satisfies the following bound
\begin{eqnarray}
\im{u_{\l}}{\Omega\times\s^1}\leq \im{g}{\Gamma^-}
+\im{f}{\Omega\times\s^1}.
\end{eqnarray}
\end{lemma}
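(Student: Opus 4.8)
The plan is to establish existence and the $L^\infty$ bound for the penalized transport equation \eqref{penalty equation} via an iterative scheme along characteristics, exploiting the absorbing structure of the operator $\lambda u_\l + \e\vw\cdot\nx u_\l + u_\l$. First I would rewrite the equation as $(\lambda+1)u_\l + \e\vw\cdot\nx u_\l = f$, which along the straight characteristic line $\vx(t) = \vx - \e t\vw$ (in the direction $-\vw$, reaching back to the inflow boundary) becomes an ordinary differential equation. For a fixed $(\vx,\vw)\in\Omega\times\s^1$, let $t_b(\vx,\vw)\geq 0$ denote the backward exit time, i.e.\ the first time such that $\vx - \e t_b\vw \in \p\Omega$ with $\vw\cdot\vec n<0$ at that boundary point. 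Integrating the ODE from the boundary gives the explicit Duhamel representation
\begin{eqnarray}
u_\l(\vx,\vw) = g(\vx-\e t_b\vw,\vw)\,\ue^{-(1+\lambda)t_b}
+ \int_0^{t_b} \ue^{-(1+\lambda)s} f(\vx-\e s\vw,\vw)\,\ud{s}.
\end{eqnarray}

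**Next I would verify that this formula defines a genuine solution** in $L^\infty(\Omega\times\s^1)$ and simultaneously read off the desired bound. Taking absolute values and using $\abs{g}\leq\im{g}{\Gamma^-}$ and $\abs{f}\leq\im{f}{\Omega\times\s^1}$ pointwise, the boundary term is bounded by $\im{g}{\Gamma^-}$ since $\ue^{-(1+\lambda)t_b}\leq 1$, while the integral term is bounded by
\begin{eqnarray}
\im{f}{\Omega\times\s^1}\int_0^{t_b}\ue^{-(1+\lambda)s}\,\ud{s}
\leq \im{f}{\Omega\times\s^1}\,\frac{1}{1+\lambda}
\leq \im{f}{\Omega\times\s^1}.
\end{eqnarray}
Adding these yields exactly $\im{u_\l}{\Omega\times\s^1}\leq\im{g}{\Gamma^-}+\im{f}{\Omega\times\s^1}$, as claimed. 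Note that the bound is uniform in $\lambda>0$ and in $\e$, which is precisely what will be needed later when passing $\lambda\to 0$ to recover the original equation \eqref{neutron}.

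**The main obstacle** is the careful treatment of the backward exit time $t_b(\vx,\vw)$ and the measurability of the representation formula near the grazing set $\Gamma^0$, where $t_b$ can be discontinuous and characteristics become tangent to $\p\Omega$. Because $\Omega$ is an annulus (a bounded domain), $t_b$ is finite for every non-grazing direction, so there is no difficulty from characteristics that never reach the boundary; one only has to confirm that the grazing set has measure zero and that $g(\vx-\e t_b\vw,\vw)$ is well-defined a.e., which follows from the $L^\infty(\Gamma)$ assumption on $g$ together with the standard change-of-variables relating the boundary measure $\ud{\xi}=\abs{\vw\cdot\vec n}\ud{s}\,\ud{\vw}$ to the interior measure. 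I would remark that the penalization by $\lambda$ is what makes the bare transport operator invertible with a clean contraction estimate; the factor $\tfrac{1}{1+\lambda}\leq 1$ in the source term is harmless here but is the mechanism that will later allow the iteration coupling in the full equation (with the $\bar u$ term reinstated) to converge. Since the representation formula directly produces both existence and the estimate, no fixed-point argument is needed at this stage, and the proof reduces to the Duhamel computation above.
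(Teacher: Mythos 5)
Your proposal is correct and follows essentially the same route as the paper: both integrate the penalized equation along the straight characteristics back to the inflow boundary, obtain the explicit Duhamel representation involving the backward exit time $t_b$, and read off the bound from $\ue^{-(1+\l)t_b}\leq 1$ and $\int_0^{t_b}\ue^{-(1+\l)s}\ud{s}\leq \frac{1}{1+\l}\leq 1$. Your additional remarks on the measure-zero grazing set and measurability of $t_b$ are a harmless refinement the paper leaves implicit.
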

\begin{proof}
The characteristics $(X(s),W(s))$ of the equation (\ref{penalty
equation}) which goes through $(\vx,\vw)$ is defined by
\begin{eqnarray}\label{character}
\left\{
\begin{array}{lll}
\dfrac{\ud{X(s)}}{\ud{s}}=\e W(s),
~~~~\dfrac{\ud{W(s)}}{\ud{s}}=0,\\ \rule{0ex}{2.0em}
(X(0),W(0))=(\vx,\vw).
\end{array}
\right.
\end{eqnarray}
which implies
\begin{eqnarray}
X(s)=\vx+(\e\vw)s,~~~ W(s)=\vw.
\end{eqnarray}
Hence, we can rewrite the equation (\ref{penalty equation}) along
the characteristics as
\begin{eqnarray}\label{well-posedness temp 31}
u_{\l}(\vx,\vw)= g(\vx-\e
t_b\vw,\vw)e^{-(1+\l)t_b}+\int_{0}^{t_b}f(\vx-\e(t_b-s)\vw,\vw)e^{-(1+\l)(t_b-s)}\ud{s},
\end{eqnarray}
where the backward exit time $t_b$ is defined as
\begin{eqnarray}\label{exit time}
t_b(\vx,\vw)=\inf\{t\geq0: (\vx-\e t\vw,\vw)\in\Gamma^-\}.
\end{eqnarray}
Then, since $t_{b} \geq 0$, it gives the following estimate
\begin{eqnarray}
|u_{\l}(\vx,\vw)|&\leq&e^{-(1+\l)t_b}\im{g}{\Gamma^-}+\frac{1-e^{(1+\l)t_b}}{1+\l}\im{f}{\Omega\times\s^1}\\
&\leq&\im{g}{\Gamma^-}+\im{f}{\Omega\times\s^1}\nonumber.
\end{eqnarray}
Since $u_{\l}$ can be explicitly traced back to the boundary data,
the existence of the solution to the equation (\ref{penalty
equation}) naturally follows from the above estimate.
\end{proof}

\begin{lemma}\label{well-posedness lemma 2}
Assume $f(\vx,\vw)\in L^{\infty}(\Omega\times\s^1)$ and $g(\vec
x,\vw)\in L^{\infty}(\Gamma)$ with $\vec x \in \partial \Omega$.
Then, for any $\lambda>0$ and $\epsilon$, there exists a solution
$u_{\l}(\vx,\vw)\in L^{\infty}(\Omega\times\s^1)$ of the penalized
transport equation
\begin{eqnarray}\label{penalty equation}
\left\{
\begin{array}{rcl}
\lambda u_{\l}+\epsilon\vec
w\cdot\nabla_xu_{\l}+u_{\l}-\bar{u}_{\l}&=&f(\vx,\vw)\ \ \text{in}\
\ \Omega \times \s^1,\\\rule{0ex}{1.0em} u_{\l}&=&g\  \text{on}\ \
\partial\Omega \times \s^1, \ \vw\cdot\vec n<0,
\end{array}
\right.
\end{eqnarray}
which satisfies the following bound
 \begin{eqnarray}
\im{u_{\l}}{\Omega\times\s^1}\leq \frac{1+\l}{\l}\bigg(
\im{f}{\Omega\times\s^1}+\im{g}{\Gamma^-}\bigg).
\end{eqnarray}
\end{lemma}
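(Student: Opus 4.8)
The plan is to treat the velocity average $\bar u_{\l}$ as a known source term and to set up a contraction iteration built directly on Lemma \ref{well-posedness lemma 1}. First I would record the sharp form of that lemma: for the penalized transport equation $\l u+\e\vw\cdot\nx u+u=\Phi$ with inflow data $u=\gamma$ on $\vw\cdot\vec n<0$, the characteristic representation (\ref{well-posedness temp 31}) gives, since $t_b\geq0$ and $\tfrac{1-\ue^{-(1+\l)t_b}}{1+\l}\leq\tfrac{1}{1+\l}$,
\begin{eqnarray}
\im{u}{\Omega\times\s^1}\leq \im{\gamma}{\Gamma^-}+\frac{1}{1+\l}\im{\Phi}{\Omega\times\s^1}.
\end{eqnarray}
The crucial feature is the gain $\tfrac{1}{1+\l}$ on the source, which the crude bound stated in Lemma \ref{well-posedness lemma 1} discards. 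I would then define $u_{\l}^{0}=0$ and, for $m\geq1$, let $u_{\l}^{m}$ be the solution provided by Lemma \ref{well-posedness lemma 1} of
\begin{eqnarray}
\left\{
\begin{array}{rcl}
\l u_{\l}^{m}+\e\vw\cdot\nx u_{\l}^{m}+u_{\l}^{m}&=&f+\bar u_{\l}^{m-1}\ \ \text{in}\ \ \Omega\times\s^1,\\
u_{\l}^{m}&=&g\ \ \text{on}\ \ \vw\cdot\vec n<0,
\end{array}
\right.
\end{eqnarray}
which is well-posed at each step because $\bar u_{\l}^{m-1}\in L^{\infty}$ whenever $u_{\l}^{m-1}\in L^{\infty}$.

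Next I would establish the contraction. The difference $v^{m}=u_{\l}^{m}-u_{\l}^{m-1}$ solves the same transport equation with zero inflow data and source $\bar v^{m-1}$, so the sharp estimate together with the elementary fact that averaging is a contraction on $L^{\infty}$, i.e. $\im{\bar v^{m-1}}{\Omega\times\s^1}\leq\im{v^{m-1}}{\Omega\times\s^1}$, yields
\begin{eqnarray}
\im{v^{m}}{\Omega\times\s^1}\leq\frac{1}{1+\l}\im{\bar v^{m-1}}{\Omega\times\s^1}\leq\frac{1}{1+\l}\im{v^{m-1}}{\Omega\times\s^1}.
\end{eqnarray}
Since $\tfrac{1}{1+\l}<1$ for $\l>0$, the sequence $\{u_{\l}^{m}\}$ is Cauchy in $L^{\infty}(\Omega\times\s^1)$ and converges to a limit $u_{\l}$. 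Because $\bar u_{\l}^{m}\to\bar u_{\l}$ uniformly, I can pass to the limit in the characteristic formula for $u_{\l}^{m}$ to conclude that $u_{\l}$ solves the penalized equation.

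To obtain the quantitative bound I would apply the sharp estimate once more, now to the fixed point $u_{\l}$ itself, whose source is $f+\bar u_{\l}$, using again $\im{\bar u_{\l}}{\Omega\times\s^1}\leq\im{u_{\l}}{\Omega\times\s^1}$:
\begin{eqnarray}
\im{u_{\l}}{\Omega\times\s^1}\leq\im{g}{\Gamma^-}+\frac{1}{1+\l}\bigg(\im{f}{\Omega\times\s^1}+\im{u_{\l}}{\Omega\times\s^1}\bigg).
\end{eqnarray}
Absorbing the last term on the left gives $\tfrac{\l}{1+\l}\im{u_{\l}}{\Omega\times\s^1}\leq\im{g}{\Gamma^-}+\tfrac{1}{1+\l}\im{f}{\Omega\times\s^1}$, and since $\tfrac{1}{\l}\leq\tfrac{1+\l}{\l}$ this is exactly the claimed bound $\im{u_{\l}}{\Omega\times\s^1}\leq\tfrac{1+\l}{\l}(\im{f}{\Omega\times\s^1}+\im{g}{\Gamma^-})$.

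The main obstacle here is really a matter of extracting the right estimate rather than a deep difficulty: if one used the bound literally as displayed in Lemma \ref{well-posedness lemma 1}, the source would enter with coefficient $1$, the iteration would have contraction factor $1$, and neither convergence nor the sharp $\tfrac{1+\l}{\l}$ prefactor would follow. The whole argument hinges on isolating the $\tfrac{1}{1+\l}$ gain visible in the proof of Lemma \ref{well-posedness lemma 1} and combining it with the observation that $f\mapsto\bar f$ is norm-nonincreasing on $L^{\infty}$; this single gain simultaneously drives the contraction and sharpens the final constant. Uniqueness, if one wishes to record it, follows immediately by applying the same difference estimate to two putative solutions.
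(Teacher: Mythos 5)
Your proposal is correct and follows essentially the same route as the paper: the same iteration scheme built on Lemma \ref{well-posedness lemma 1}, with the contraction coming from the $\frac{1}{1+\l}$ gain along characteristics combined with the fact that $v\mapsto\bar v$ is nonexpansive in $L^{\infty}$. The only (immaterial) difference is at the end, where the paper sums the telescoping geometric series $\sum_k\im{v^k}{\Omega\times\s^1}\leq\frac{1+\l}{\l}(\im{g}{\Gamma^-}+\im{f}{\Omega\times\s^1})$ while you apply the sharp estimate to the fixed point and absorb $\im{u_{\l}}{\Omega\times\s^1}$; both yield the same constant.
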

\begin{proof}
We define an approximating sequence $\{u_{\l}^k\}_{k=0}^{\infty}$,
where $u_{\l}^0=0$ and
\begin{eqnarray}\label{penalty temp 1}
\left\{
\begin{array}{rcl}
\lambda u_{\l}^{k}+\epsilon\vec
w\cdot\nabla_xu_{\l}^k+u_{\l}^k&=&\bar u_{\l}^{k-1}+f(\vx,\vw)\ \
\text{in}\ \ \Omega \times \s^1 ,\\\rule{0ex}{1.0em} u_{\l}^k&=&g \
\ \text{on}\ \ \p\Omega \times \s^1\ \ \text{and}\ \vw\cdot\vec n<0.
\end{array}
\right.
\end{eqnarray}
By Lemma \ref{well-posedness lemma 1}, $u_{\l}^1$ is well-defined
and \begin{eqnarray}\label{Estimate sequence 0}
\im{u_{\l}^1}{\Omega\times\s^1}<
\im{g}{\Gamma^-}+\im{f}{\Omega\times\s^1} .\end{eqnarray}
We define the difference $v^k=u_{\l}^{k}-u_{\l}^{k-1}$ for $k\geq
1$. Recall (\ref{average 1}) for $\bar v^k$, then $v^k$ satisfies
\begin{eqnarray}
\label{penalty temp 3} \left\{
\begin{array}{rcl}
\lambda v^{k}+\epsilon\vec w\cdot\nabla_x v^k +v^k &=& \bar v^{k-1}
\ \text{in}\ \ \Omega \times \s^1 ,\\\rule{0ex}{1.0em} v_{\l}^k&=&0
\ \ \text{on}\ \ \p\Omega \times \s^1\ \ \text{and}\ \vw\cdot\vec
n<0.
\end{array}
\right.
\end{eqnarray}
Since $\im{\bar
v^k}{\Omega\times\s^1}\leq\im{v^k}{\Omega\times\s^1}$,
we naturally have
\begin{eqnarray}\label{Estimate sequence 1}
\im{v^{k+1}}{\Omega\times\s^1}\leq \frac{1}{1+\l}\im{\bar
v^{k}}{\Omega\times\s^1}\leq
\frac{1}{1+\l}\im{v^{k}}{\Omega\times\s^1}.
\end{eqnarray}
Thus, $(u_{\l}^k)_{k=0}^{\infty}$ is a contraction series for
$\l>0$.
Since $v^1 =u_{\l}^1$, by combining (\ref{Estimate sequence 0}) and
(\ref{Estimate sequence 1}), it easily deduces
\begin{eqnarray}\label{Estimate sequence 3}
\im{u^k}{\Omega\times\s^1}\leq \sum_{k=1}^{\infty}
\im{v^k}{\Omega\times\s^1} \leq \frac{1+\l}{\l} \left(
\im{g}{\Gamma^-}+\im{f}{\Omega\times\s^1}\right).
\end{eqnarray}
Let $k \rightarrow \infty$, we get the existence of the solution to
(\ref{penalty equation}). This completes the proof of Lemma
\ref{well-posedness lemma 2}.
\end{proof}
From (\ref{penalty equation}), the bound of the solution depends on $\lambda$. Then we can not get the solution of the equation by letting $\lambda$ tends to zero. So, we need to show a uniform estimate of the solution to the penalized
neutron transport equation (\ref{penalty equation}) with respect to
$\l$.

\subsection{Uniform $L^2$ Estimate}

We recall the following Green's Identity, which could be found in
\cite[Chapter 9]{Cercignani.Illner.Pulvirenti1994} and
\cite{Esposito.Guo.Kim.Marra2013}.
\begin{lemma}(Green's Identity)\label{well-posedness lemma 3}
Assume $f(\vx,\vw),\ g(\vx,\vw)\in L^2(\Omega\times\s^1)$ and
$\vw\cdot\nx f,\ \vw\cdot\nx g\in L^2(\Omega\times\s^1)$ with $f,\
g\in L^2(\Gamma)$. Then
\begin{eqnarray}
\iint_{\Omega\times\s^1}\bigg((\vw\cdot\nx f)g+(\vw\cdot\nx
g)f\bigg)\ud{\vx}\ud{\vw}=\int_{\Gamma}fg\ud{\gamma},
\end{eqnarray}
where $\ud{\gamma}=(\vw\cdot\vec n)\ud{s}$ on the boundary.
\end{lemma}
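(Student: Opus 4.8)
The plan is to read the claimed identity as the phase-space analogue of the divergence theorem, prove it first for smooth functions, and then pass to the stated $L^2$ class by density. Since $\vw\in\s^1$ enters the $\vx$-integration only as a fixed parameter and $\nx\cdot\vw=0$, for each fixed $\vw$ the integrand is an exact spatial divergence,
\[
(\vw\cdot\nx f)\,g+(\vw\cdot\nx g)\,f=\nx\cdot(\vw\,fg).
\]
Thus when $f(\cdot,\vw),g(\cdot,\vw)\in C^1(\bar\Omega)$ the classical divergence theorem on $\Omega$ gives $\int_{\Omega}\nx\cdot(\vw fg)\ud{\vx}=\int_{\p\Omega}(\vw\cdot\vec n)fg\ud{s}$, and integrating this over $\vw\in\s^1$ produces exactly the claim, since $\Gamma=\p\Omega\times\s^1$ and $\ud{\gamma}=(\vw\cdot\vec n)\ud{s}$. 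This settles the smooth case.

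For the general case I would argue by density: approximate the pair $f,g$ by smooth $f_n,g_n$ so that simultaneously $f_n\to f$ and $\vw\cdot\nx f_n\to\vw\cdot\nx f$ in $L^2(\Omega\times\s^1)$, and $f_n\to f$ in $L^2(\Gamma)$ (likewise for $g$). Granting such sequences, the left-hand side converges because it is bilinear in $(f,\vw\cdot\nx f)$ and $(g,\vw\cdot\nx g)$, all of which converge in $L^2(\Omega\times\s^1)$, while the right-hand side converges by the $L^2(\Gamma)$ convergence of the traces. To build the approximation I would proceed direction by direction: for fixed $\vw$ introduce the adapted coordinates $\vx=s\vw+\tau\vw^{\perp}$, so that $\vw\cdot\nx=\p_s$, and regularize by mollifying in the characteristic variable $s$ after extending $f$ across $\p\Omega$; this smooths $f$ while keeping $\vw\cdot\nx f=\p_s f$ under control.

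The main obstacle is the trace term, because the map $f\mapsto f|_{\Gamma}$ is not continuous on $L^2(\Omega\times\s^1)$, so a naive mollification need not converge on $\Gamma$. The saving feature is that the weight $\abs{\vw\cdot\vec n}$ in $\ud{\gamma}$ degenerates precisely on the grazing set $\Gamma^0$, and away from it the one-dimensional slicing turns the trace into an honest endpoint value: on each chord of $\Omega$ in the direction $\vw$ the function $s\mapsto f$ lies in $H^1$ of the finitely many subintervals cut out by $\p\Omega$, so the fundamental theorem of calculus applies and the endpoint values are controlled by $\tnm{f}+\tnm{\vw\cdot\nx f}$ in the weighted norm. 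I would therefore carry out the fundamental theorem of calculus on each slice, sum the entry and exit contributions (noting that in the annulus a single line may meet $\p\Omega$ in up to four points, with alternating signs of $\vw\cdot\vec n$), integrate in $\tau$ and then in $\vw$, and finally remove a cutoff isolating a neighbourhood of $\Gamma^0$ by dominated convergence. Establishing this trace convergence uniformly near the grazing set is the only delicate point; the remaining manipulations are routine.
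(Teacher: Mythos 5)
Your argument is sound, but note that the paper does not prove this lemma at all: it is quoted as a known fact from \cite[Chapter 9]{Cercignani.Illner.Pulvirenti1994} and \cite{Esposito.Guo.Kim.Marra2013}, so there is no in-paper proof to match. What you propose is essentially the standard argument from those references, and it works. Two remarks on streamlining it. First, your opening reduction $(\vw\cdot\nx f)g+(\vw\cdot\nx g)f=\nx\cdot(\vw fg)$ plus the divergence theorem settles the smooth case, but the density step you then sketch is actually superfluous: the slicing argument in your final paragraph already gives a complete \emph{direct} proof in the stated $L^2$ class. Indeed, the hypothesis $\vw\cdot\nx f\in L^2(\Omega\times\s^1)$ means precisely that for a.e.\ $\vw$ and a.e.\ chord $\vx=s\vw+\tau\vw^{\perp}$ the restriction of $f$ is in $H^1$ of each subinterval cut out by $\p\Omega$, hence absolutely continuous there, so $(fg)'=f'g+fg'$ holds pointwise a.e.\ and the fundamental theorem of calculus applies exactly on each subinterval; integrating in $\tau$ converts the endpoint contributions into $\int_{\p\Omega}(\vw\cdot\vec n)fg\ud{s}$ via the usual change of variables, and then one integrates in $\vw$. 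Second, the grazing-set issue you flag is handled entirely by the hypotheses: $f,g\in L^2(\Gamma)$ gives $fg\in L^1(\Gamma)$ by Cauchy--Schwarz, and the interior integrand is in $L^1(\Omega\times\s^1)$ likewise, so the cutoff isolating a neighbourhood of $\Gamma^0$ is removed by dominated convergence on both sides with no uniformity required. The only point that deserves an explicit sentence is the identification of the a.e.\ chord-endpoint values with the trace appearing in the hypothesis $f,g\in L^2(\Gamma)$; with that stated, your outline is complete.
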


We firstly give the uniform $L^2$ estimate of the hydrodynamic part of the solution.

\begin{lemma}\label{well-posedness lemma 4}
The solution $u_{\l}$ to the equation (\ref{penalty equation})
satisfies the uniform estimate
\begin{eqnarray}\label{well-posedness temp 3}
\e\tm{\bar u_{\l}}{\Omega\times\s^1}\leq C(\Omega)\bigg(
\tm{u_{\l}-\bar
u_{\l}}{\Omega\times\s^1}+\tm{f}{\Omega\times\s^1}+\e\tm{u_{\l}}{\Gamma^{+}}+\e\tm{g}{\Gamma^-}\bigg),
\end{eqnarray}
for $0\leq\l \leq 1$ and $0<\e\leq 1$.
\end{lemma}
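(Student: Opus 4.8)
The plan is to estimate the hydrodynamic part $\bar u_{\l}$ by a test-function (duality) argument in which the test function is engineered so that its velocity average reproduces $\bar u_{\l}$ after one integration by parts. Concretely, I let $\Phi(\vx)$ solve the auxiliary Dirichlet problem $-\Delta_x\Phi=\bar u_{\l}$ in $\Omega$ with $\Phi=0$ on $\p\Omega$, and take the test function $\psi(\vx,\vw)=\vw\cdot\nx\Phi(\vx)$. Since the annulus is smooth, standard elliptic regularity gives $\nm{\Phi}_{H^2(\Omega)}\leq C(\Omega)\tm{\bar u_{\l}}{\Omega}$, which supplies the three quantitative inputs I will use: $\tm{\psi}{\Omega\times\s^1}\leq C\tm{\bar u_{\l}}{\Omega}$, the second-derivative bound $\tm{\vw\cdot\nx\psi}{\Omega\times\s^1}\leq C\tm{\bar u_{\l}}{\Omega}$, and the trace bound $\tm{\psi}{\Gamma}\leq C\tm{\bar u_{\l}}{\Omega}$ obtained from the trace theorem applied to $\nx\Phi\in H^1(\Omega)$.

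Next I multiply the equation (\ref{penalty equation}) by $\psi$, integrate over $\Omega\times\s^1$, and apply Green's Identity (Lemma \ref{well-posedness lemma 3}) to the streaming term, which transfers the $x$-derivative onto $\psi$ and creates a boundary contribution on $\Gamma$. The decisive algebraic fact is the velocity-averaging identity $\int_{\s^1}w_iw_j\ud{\vw}=\pi\delta_{ij}$, so that $\int_{\s^1}\vw\cdot\nx\psi\,\ud{\vw}=\pi\Delta_x\Phi=-\pi\bar u_{\l}$; hence the $\bar u_{\l}$-part of $\iint \e u_{\l}(\vw\cdot\nx\psi)$ equals $-\pi\e\tm{\bar u_{\l}}{\Omega}^2$, exactly the quantity to be bounded below. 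Equally useful is $\int_{\s^1}\psi\,\ud{\vw}=\big(\int_{\s^1}\vw\,\ud{\vw}\big)\cdot\nx\Phi=0$, which annihilates the $\bar u_{\l}$-contribution in the zeroth-order terms $\iint\l u_{\l}\psi$ and $\iint(u_{\l}-\bar u_{\l})\psi$; consequently every remaining bulk term pairs $\psi$ or $\vw\cdot\nx\psi$ only against the microscopic data $f$ and $u_{\l}-\bar u_{\l}$.

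Using $\e\vw\cdot\nx u_{\l}=f-\l u_{\l}-(u_{\l}-\bar u_{\l})$ to rewrite the streaming contribution, I arrive at an identity of the schematic form $\pi\e\tm{\bar u_{\l}}{\Omega}^2=(\text{bulk terms})+(\text{boundary term})$. Each bulk term is controlled by Cauchy--Schwarz together with the elliptic bounds above and the hypotheses $0\leq\l\leq1$, $0<\e\leq1$, giving the partial bound $\leq C\big(\tm{f}{\Omega\times\s^1}+\tm{u_{\l}-\bar u_{\l}}{\Omega\times\s^1}\big)\tm{\bar u_{\l}}{\Omega}$.

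For the boundary term I exploit that $\Phi=0$ on $\p\Omega$ forces $\nx\Phi=(\p_n\Phi)\vec n$ there, so that $\psi|_{\p\Omega}=(\vw\cdot\vec n)\p_n\Phi$; splitting $\Gamma=\Gamma^+\cup\Gamma^-$ and substituting $u_{\l}=g$ on $\Gamma^-$, Cauchy--Schwarz against the measure $\ud{\gamma}=(\vw\cdot\vec n)\ud{s}\,\ud{\vw}$ together with the trace bound for $\psi$ yields the contribution $C\e\big(\tm{g}{\Gamma^-}+\tm{u_{\l}}{\Gamma^+}\big)\tm{\bar u_{\l}}{\Omega}$. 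Dividing through by $\tm{\bar u_{\l}}{\Omega}$ and converting $\tm{\bar u_{\l}}{\Omega}$ to $\tm{\bar u_{\l}}{\Omega\times\s^1}$ (they differ only by the factor $\sqrt{2\pi}$ since $\bar u_{\l}$ is independent of $\vw$) gives (\ref{well-posedness temp 3}). I expect the boundary term to be the main obstacle: one must use the Dirichlet condition on $\Phi$ to see that $\psi$ on $\p\Omega$ carries the factor $\vw\cdot\vec n$, which is precisely what lets the trace split cleanly into the prescribed $\Gamma^{\pm}$ norms and what preserves the correct power of $\e$; moreover, controlling the trace of $\nx\Phi$ genuinely requires the full $H^2$ elliptic estimate rather than a mere energy bound.
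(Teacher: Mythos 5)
Your proposal is correct and follows essentially the same route as the paper's proof: the paper also solves the auxiliary Dirichlet problem $\Delta\zeta=\bar u_{\l}$ with $\zeta=0$ on $\p\Omega$, takes the test function $-\vw\cdot\nx\zeta$ (your $\psi$ up to sign), uses $\int_{\s^1}w_iw_j\,\ud{\vw}=\pi\delta_{ij}$ to extract $\e\pi\tm{\bar u_{\l}}{\Omega}^2$, and controls the remaining bulk and boundary terms exactly as you do via the $H^2$ elliptic estimate, the trace theorem, and the split $\Gamma=\Gamma^+\cup\Gamma^-$ with $u_{\l}=g$ on $\Gamma^-$. The only cosmetic difference is your explicit remark that $\psi|_{\p\Omega}=(\vw\cdot\vec n)\p_n\Phi$, which the paper replaces by a direct Cauchy--Schwarz against the measure $\ud{\gamma}$.
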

\begin{proof} From Lemma \ref{well-posedness lemma 2}, it is nature that
$u_{\l} \in L^2(\Omega\times \s^1)$ as well as $u_{\l} \in
L^2(\Gamma)$. It follows that $\vw \cdot \nabla u_{\l} \in
L^2(\Omega \times \s^1)$ from the equation (\ref{penalty equation}).
Then, for any $\phi\in L^2(\Omega\times\s^1)$ satisfying
$\vw\cdot\nx\phi\in L^2(\Omega\times\s^1)$ and $\phi\in
L^2(\Gamma)$,  we have
\begin{eqnarray}\label{well-posedness temp 4}
\l\iint_{\Omega\times\s^1}u_{\l}\phi+\e\int_{\Gamma}u_{\l}\phi\ud{\gamma}
-\e\iint_{\Omega\times\s^1}u_{\l}(\vw\cdot\nx\phi)+\iint_{\Omega\times\s^1}(u_{\l}-\bar
u_{\l})\phi=\iint_{\Omega\times\s^1}f\phi.
\end{eqnarray}
Similar to \cite{AA003}, it needs to choose a particular test
function $\phi$. We define $\zeta(\vx)$ on $\Omega$ satisfying
\begin{eqnarray}\label{test temp 1}
\left\{
\begin{array}{rcl}
\Delta \zeta&=&\bar u_{\l}\ \ \text{in}\ \
\Omega,\\\rule{0ex}{1.0em} \zeta&=&0\ \ \text{on}\ \ \p\Omega.
\end{array}
\right.
\end{eqnarray}
Based on the standard elliptic estimate, we have
\begin{eqnarray}\label{test temp 3}
\nm{\zeta}_{H^2(\Omega)}\leq C(\Omega)\nm{\bar
u_{\l}}_{L^2(\Omega)}.
\end{eqnarray}
The test function is defined as
\begin{eqnarray}\label{test temp 2}
\phi=-\vw\cdot\nx\zeta
\end{eqnarray}
Naturally, we have
\begin{eqnarray}\label{test temp 4}
\nm{\phi}_{H^1(\Omega)}+\nm{\phi}_{L^{\infty}(\Omega)}\leq
C\nm{\zeta}_{H^2(\Omega)}\leq C(\Omega)\nm{\bar
u_{\l}}_{L^2(\Omega)}.
\end{eqnarray}
We can decompose
\begin{eqnarray}\label{test temp 5}
-\e\iint_{\Omega\times\s^1}(\vw\cdot\nx\phi)u_{\l}&=&-\e\iint_{\Omega\times\s^1}(\vw\cdot\nx\phi)\bar
u_{\l}-\e\iint_{\Omega\times\s^1}(\vw\cdot\nx\phi)(u_{\l}-\bar
u_{\l}).
\end{eqnarray}
For the first part, we have
\begin{eqnarray}\label{wellposed temp 1}
-\e\iint_{\Omega\times\s^1}(\vw\cdot\nx\phi)\bar
u_{\l}&=&\e\iint_{\Omega\times\s^1}\bar
u_{\l}\bigg(w_1(w_1\p_{11}\zeta+w_2\p_{12}\zeta)+w_2(w_1\p_{12}\zeta+w_2\p_{22}\zeta)\bigg)\\
&=&\e\iint_{\Omega\times\s^1}\bar
u_{\l}\bigg(w_1^2\p_{11}\zeta+2w_1w_2\p_{12}+ w_2^2\p_{22}\zeta\bigg)\nonumber\\
&=&\e\pi\int_{\Omega}\bar u_{\l}(\p_{11}\zeta+\p_{22}\zeta)\nonumber\\
&=&\e\pi\nm{\bar u_{\l}}_{L^2(\Omega)}^2\nonumber\\
&=&\half\e\nm{\bar u_{\l}}_{L^2(\Omega\times\s^1)}^2\nonumber.
\end{eqnarray}
On the other hand,  H\"older's inequality and the elliptic estimate
imply
\begin{eqnarray}\label{wellposed temp 2}
\abs{\e\iint_{\Omega\times\s^1}(\vw\cdot\nx\phi)(u_{\l}-\bar
u_{\l})} &\leq&C(\Omega)\e\nm{u_{\l}-\bar u_{\l}}_{L^2(\Omega\times\s^1)}\nm{\zeta}_{H^2(\Omega)}\\
&\leq&C(\Omega)\e\nm{u_{\l}-\bar
u_{\l}}_{L^2(\Omega\times\s^1)}\nm{\bar
u_{\l}}_{L^2(\Omega\times\s^1)}\nonumber.
\end{eqnarray}
Based on (\ref{test temp 3}), (\ref{test temp 4}), the boundary
condition of the penalized neutron transport equation (\ref{penalty
equation}), the trace theorem, H\"older's inequality and the
elliptic estimate, we have
\begin{eqnarray}\label{wellposed temp 3}
\abs{\e\int_{\Gamma}u_{\l}\phi\ud{\gamma}} & \leq & \e \bigg(\int_{\Gamma}|u_{\l}|^2 \ud{\gamma}\bigg)^{1/2} \bigg(\int_{\Gamma}|\phi|^2 \ud{\gamma}\bigg)^{1/2} \\
&\leq&
C(\Omega)\e\bigg(\nm{u_{\l}}_{L^2(\Gamma^+)}+\tm{g}{\Gamma^-}\bigg)\nm{\bar
u_{\l}}_{L^2(\Omega\times\s^1)}\nonumber,
\end{eqnarray}
\begin{eqnarray}\label{wellposed temp 4}
\l\abs{\iint_{\Omega\times\s^1}u_{\l}\phi } &=&
\l\abs{\iint_{\Omega\times\s^1}\bar
u_{\l}\phi+\l\iint_{\Omega\times\s^1}(u_{\l}-\bar u_{\l})\phi}\\
&=&\l\iint_{\Omega\times\s^1}\abs{ (u_{\l}-\bar u_{\l})\phi }
\nonumber \\
&\leq & C(\Omega)\l\nm{u_{\l}-\bar
u_{\l}}_{L^2(\Omega\times\s^1)}\nm{\bar
u_{\l}}_{L^2(\Omega\times\s^1)}\nonumber,
\end{eqnarray}
\begin{eqnarray}\label{wellposed temp 5}
\abs{\iint_{\Omega\times\s^1}(u_{\l}-\bar u_{\l})\phi}\leq
C(\Omega)\nm{u_{\l}-\bar u_{\l}}_{L^2(\Omega\times\s^1)}\nm{\bar
u_{\l}}_{L^2(\Omega\times\s^1)},
\end{eqnarray}
\begin{eqnarray}\label{wellposed temp 6}
\abs{\iint_{\Omega\times\s^1}f\phi } \leq C(\Omega)\nm{\bar
u_{\l}}_{L^2(\Omega\times\s^1)}\nm{f}_{L^2(\Omega\times\s^1)}.
\end{eqnarray}
Collecting terms in (\ref{wellposed temp 1}), (\ref{wellposed temp
2}), (\ref{wellposed temp 3}), (\ref{wellposed temp 4}),
(\ref{wellposed temp 5}) and (\ref{wellposed temp 6}), we obtain
\begin{eqnarray}
\e\nm{\bar u_{\l}}_{L^2(\Omega\times\s^1)} & \leq&
C(\Omega)\bigg((1+\e+\l)\nm{u_{\l}-\bar
u_{\l}}_{L^2(\Omega\times\s^1)}\\
& &~~~
+\e\nm{u_{\l}}_{L^2(\Gamma^+)}+\nm{f}_{L^2(\Omega\times\s^1)}+\e\tm{g}{\Gamma^-}\bigg)\nonumber,
\end{eqnarray}
So we get the desired uniform estimate with respect to $0\leq
\lambda\leq 1$.
\end{proof}
\begin{theorem}\label{LT estimate}
Assume $f(\vx,\vw)\in L^2(\Omega\times\s^1)$ and $g(\vec x_0,\vw)\in
L^2(\Gamma^-)$. Then for the steady neutron transport equation
(\ref{neutron}), there exists a unique solution $u(\vx,\vw)\in
L^2(\Omega\times\s^1)$ satisfying
\begin{eqnarray}
\tm{u}{\Omega\times\s^1}\leq C(\Omega)\bigg(
\frac{1}{\e^2}\tm{f}{\Omega\times\s^1}+\frac{1}{\e^{1/2}}\tm{g}{\Gamma^-}\bigg).
\end{eqnarray}
\end{theorem}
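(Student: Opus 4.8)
The plan is to establish the stated inequality first as an a priori estimate, uniform in the penalization parameter $\l\in[0,1]$, for the solutions $u_{\l}$ of the penalized problem (\ref{penalty equation}) furnished by Lemma \ref{well-posedness lemma 2}, and then to pass to the limit $\l\rt0$. Two complementary estimates drive the argument: an energy ($L^2$) estimate obtained by testing the equation against $u_{\l}$ itself, which controls the microscopic part $u_{\l}-\bar u_{\l}$ together with the outflow trace; and the macroscopic estimate of Lemma \ref{well-posedness lemma 4}, which controls $\bar u_{\l}$. Neither closes on its own, so the core of the proof is to couple the two.

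First I would multiply (\ref{penalty equation}) by $u_{\l}$ and integrate over $\Omega\times\s^1$. By Green's Identity (Lemma \ref{well-posedness lemma 3}) the transport term contributes $\half\e\big(\tm{u_{\l}}{\Gamma^+}^2-\tm{g}{\Gamma^-}^2\big)$, while the orthogonality $\iint(u_{\l}-\bar u_{\l})\bar u_{\l}=0$ turns the collision term into $\tnm{u_{\l}-\bar u_{\l}}^2$. Dropping the nonnegative term $\l\tnm{u_{\l}}^2$ and bounding the right-hand side by $\tnm{f}\big(\tnm{\bar u_{\l}}+\tnm{u_{\l}-\bar u_{\l}}\big)$ via Cauchy--Schwarz and the triangle inequality, this yields
\begin{eqnarray}
\tnm{u_{\l}-\bar u_{\l}}^2+\e\tm{u_{\l}}{\Gamma^+}^2\ls \e\tm{g}{\Gamma^-}^2+\tnm{f}\big(\tnm{\bar u_{\l}}+\tnm{u_{\l}-\bar u_{\l}}\big).
\end{eqnarray}

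Next I would insert the macroscopic estimate $\e\tnm{\bar u_{\l}}\ls\tnm{u_{\l}-\bar u_{\l}}+\tnm{f}+\e\tm{u_{\l}}{\Gamma^+}+\e\tm{g}{\Gamma^-}$ from Lemma \ref{well-posedness lemma 4} to remove $\tnm{\bar u_{\l}}$ from the right-hand side above. After this substitution, the terms involving $\tnm{u_{\l}-\bar u_{\l}}$ and $\e^{1/2}\tm{u_{\l}}{\Gamma^+}$ are absorbed into the left by Young's inequality with a small parameter $\delta$, leaving $\tnm{u_{\l}-\bar u_{\l}}\ls \e^{-1}\tnm{f}+\e^{1/2}\tm{g}{\Gamma^-}$ and a matching trace bound. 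Feeding these back into Lemma \ref{well-posedness lemma 4} gives $\tnm{\bar u_{\l}}\ls \e^{-2}\tnm{f}+\e^{-1/2}\tm{g}{\Gamma^-}$, and $\tnm{u_{\l}}\le\tnm{u_{\l}-\bar u_{\l}}+\tnm{\bar u_{\l}}$ produces the claimed bound uniformly in $\l$. The \emph{main obstacle} is precisely this bookkeeping of powers of $\e$: the two estimates feed $\tnm{\bar u_{\l}}$, $\tnm{u_{\l}-\bar u_{\l}}$ and the outflow trace into each other with different $\e$-weights, so the Young-inequality splittings must be $\e$-weighted (e.g.\ $\tnm{f}\,\tm{g}{\Gamma^-}\le\frac{1}{2\e}\tnm{f}^2+\frac{\e}{2}\tm{g}{\Gamma^-}^2$, and $\e^{-1}\tnm{f}\tnm{u_{\l}-\bar u_{\l}}\le\delta\tnm{u_{\l}-\bar u_{\l}}^2+C_\delta\e^{-2}\tnm{f}^2$) so that exactly the factors $\e^{-2}$ and $\e^{-1/2}$ emerge, with all constants independent of $\l$.

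Finally, since the bound is uniform in $\l$, I would extract a subsequence $u_{\l}\rightharpoonup u$ weakly in $L^2(\Omega\times\s^1)$ as $\l\rt0$; the limit solves (\ref{neutron}) in the weak sense, attains the inflow data on $\Gamma^-$, and inherits the estimate by weak lower semicontinuity of the norm. Uniqueness follows by applying the same a priori estimate with $\l=0$ to the difference of two solutions, which solves the homogeneous problem with $f=0$ and $g=0$ and hence must vanish. The one point needing care here is that Green's Identity requires $L^2$ traces on $\Gamma$, which hold for each $u_{\l}$ through the explicit characteristic representation of Lemma \ref{well-posedness lemma 1} and are preserved in the limit.
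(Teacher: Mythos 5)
Your proposal is correct and follows essentially the same route as the paper: test the penalized equation against $u_{\l}$ to control $u_{\l}-\bar u_{\l}$ and the outflow trace, couple this with the macroscopic estimate of Lemma \ref{well-posedness lemma 4} using $\e$-weighted Cauchy/Young inequalities to absorb terms and extract the $\e^{-2}$ and $\e^{-1/2}$ factors, then pass to the weak limit $\l\rt0$. The only cosmetic difference is organizational — you substitute the macroscopic bound into the right-hand side and iterate, whereas the paper adds a small multiple of the squared macroscopic inequality to the energy identity — but the estimates and conclusions are identical.
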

\begin{proof}
In the weak formulation (\ref{well-posedness temp 4}), we may take
the test function $\phi=u_{\l}$ to get the energy estimate
\begin{eqnarray}
\l\nm{u_{\l}}_{L^2(\Omega\times\s^1)}^2+\half\e\int_{\Gamma}\abs{u_{\l}}^2\ud{\gamma}+\nm{u_{\l}-\bar
u_{\l}}_{L^2(\Omega\times\s^1)}^2=\iint_{\Omega\times\s^1}fu_{\l}.
\end{eqnarray}
Hence, this naturally implies
\begin{eqnarray}\label{well-posedness temp 5}
\half\e\nm{u_{\l}}_{L^2(\Gamma^+)}^2+\nm{u_{\l}-\bar
u_{\l}}_{L^2(\Omega\times\s^1)}^2\leq\Big| \iint_{\Omega\times\s^1}fu_{\l}\Big|+\half\e\nm{g}_{L^2(\Gamma^-)}^2.
\end{eqnarray}
On the other hand, we can square on both sides of
(\ref{well-posedness temp 3}) to obtain
\begin{eqnarray}\label{well-posedness temp 6}
\\
\e^2\tm{\bar u_{\l}}{\Omega\times\s^1}^2\leq C(\Omega)\bigg(
\tm{u_{\l}-\bar
u_{\l}}{\Omega\times\s^1}^2+\tm{f}{\Omega\times\s^1}^2+\e^2\tm{u_{\l}}{\Gamma^{+}}^2+\e^2\tm{g}{\Gamma^-}^2\bigg).\nonumber
\end{eqnarray}
Multiplying a sufficiently small constant on both sides of
(\ref{well-posedness temp 6}) and adding it to (\ref{well-posedness
temp 5}) to absorb $\nm{u_{\l}}_{L^2(\Gamma^+)}^2$ and
$\nm{u_{\l}-\bar u_{\l}}_{L^2(\Omega\times\s^1)}^2$, we deduce
\begin{eqnarray}
&&\e\nm{u_{\l}}_{L^2(\Gamma^+)}^2+\e^2\nm{\bar
u_{\l}}_{L^2(\Omega\times\s^1)}^2+\nm{u_{\l}-\bar
u_{\l}}_{L^2(\Omega\times\s^1)}^2\\&&\qquad\qquad\qquad\leq
C(\Omega)\bigg(\tm{f}{\Omega\times\s^1}^2+
\Big|\iint_{\Omega\times\s^1}fu_{\l}\Big|+\e\nm{g}_{L^2(\Gamma^-)}^2\bigg).\nonumber
\end{eqnarray}
Hence, we have
\begin{eqnarray}\label{well-posedness temp 7}
\e\nm{u_{\l}}_{L^2(\Gamma^+)}^2+\e^2\nm{u_{\l}}_{L^2(\Omega\times\s^1)}^2\leq
C(\Omega)\bigg(\tm{f}{\Omega\times\s^1}^2+
\Big|\iint_{\Omega\times\s^1}fu_{\l}\Big|+\e\nm{g}_{L^2(\Gamma^-)}^2\bigg).
\end{eqnarray}
A simple application of Cauchy's inequality leads to
\begin{eqnarray}
\iint_{\Omega\times\s^1}fu_{\l}\leq\frac{1}{4C\e^2}\tm{f}{\Omega\times\s^1}^2+C\e^2\tm{u_{\l}}{\Omega\times\s^1}^2.
\end{eqnarray}
Taking $C$ sufficiently small, we can divide (\ref{well-posedness
temp 7}) by $\e^2$ to obtain
\begin{eqnarray}\label{well-posedness temp 21}
\frac{1}{\e}\nm{u_{\l}}_{L^2(\Gamma^+)}^2+\nm{u_{\l}}_{L^2(\Omega\times\s^2)}^2\leq
C(\Omega)\bigg(
\frac{1}{\e^4}\nm{f}_{L^2(\Omega\times\s^2)}^2+\frac{1}{\e}\nm{g}_{L^2(\Gamma^-)}^2\bigg).
\end{eqnarray}
Since above estimate does not depend on $\l$, it gives a uniform
estimate for the penalized neutron transport equation (\ref{penalty
equation}). Thus, we can extract a weakly convergent subsequence
$u_{\l}\rt u$ as $\l\rt0$. The weak lower semi-continuity of norms
$\nm{\cdot}_{L^2(\Omega\times\s^2)}$ and
$\nm{\cdot}_{L^2(\Gamma^+)}$ implies $u$ also satisfies the estimate
(\ref{well-posedness temp 21}). Hence, in the weak formulation
(\ref{well-posedness temp 4}), we can take $\l\rt0$ to deduce that
$u$ satisfies equation (\ref{neutron}). Also $u_{\l}-u$ satisfies
the equation
\begin{eqnarray}
\left\{
\begin{array}{rcl}
\epsilon\vec w\cdot\nabla_x(u_{\l}-u)+(u_{\l}-u)-(\bar u_{\l}-\bar
u)&=&-\l u_{\l}\ \ \text{in}\ \
\Omega\label{remainder},\\\rule{0ex}{1.0em} (u_{\l}-u)(\vec x_0,\vec
w)&=&0\ \ \text{for}\ \ \vec x_0\in\p\Omega\ \ and\ \vw\cdot\vec
n<0.
\end{array}
\right.
\end{eqnarray}
By a similar argument as above, we can achieve
\begin{eqnarray}
\nm{u_{\l}-u}_{L^2(\Omega\times\s^2)}^2\leq
C(\Omega)\bigg(\frac{\l}{\e^4}\nm{u_{\l}}_{L^2(\Omega\times\s^2)}^2\bigg).
\end{eqnarray}
When $\l\rt0$, the right-hand side approaches zero, which implies
the convergence is actually in the strong sense. The uniqueness
easily follows from the energy estimates.
\end{proof}

\subsection{$L^{\infty}$ Estimate}

\begin{theorem}\label{LI estimate}
Assume $f(\vx,\vw)\in L^{\infty}(\Omega\times\s^1)$ and $g(\vec
x_0,\vw)\in L^{\infty}(\Gamma^-)$. Then for the neutron transport
equation (\ref{neutron}), there exists a unique solution
$u(\vx,\vw)\in L^{\infty}(\Omega\times\s^1)$ satisfying
\begin{eqnarray}
\im{u}{\Omega\times\s^1}\leq C(\Omega)\bigg(
\frac{1}{\e^{3}}\im{f}{\Omega\times\s^1}+\frac{1}{\e^{3/2}}\im{g}{\Gamma^-}\bigg).
\end{eqnarray}
\end{theorem}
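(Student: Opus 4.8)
The plan is to work with the penalized equation (\ref{penalty equation}), whose solvability for each $\l>0$ is furnished by Lemma \ref{well-posedness lemma 2}, to derive an $L^{\infty}$ bound on $u_{\l}$ that is \emph{uniform} in $\l$, and then let $\l\to0$. Since Theorem \ref{LT estimate} already supplies a weakly convergent subsequence $u_{\l}\rightharpoonup u$ with $u$ solving (\ref{neutron}) in $L^2$, together with the estimate $\tm{u}{\Omega\times\s^1}\leq C(\Omega)(\e^{-2}\tm{f}{\Omega\times\s^1}+\e^{-1/2}\tm{g}{\Gamma^-})$, the uniform $L^{\infty}$ bound will descend to the limit by weak lower semicontinuity, and uniqueness is inherited from the $L^2$ theory. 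Thus the whole problem reduces to an $\l$-independent bound for $\im{u_{\l}}{\Omega\times\s^1}$, and the crux is to control the nonlocal average $\bar u_{\l}$.

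First I would rewrite (\ref{penalty equation}) along the characteristics exactly as in (\ref{well-posedness temp 31}), but now carrying the full source $\bar u_{\l}+f$ in place of $f$:
\begin{eqnarray}
u_{\l}(\vx,\vw)=g(\vx-\e t_b\vw,\vw)\ue^{-(1+\l)t_b}+\int_0^{t_b}\big(\bar u_{\l}+f\big)(\vx-\e(t_b-s)\vw,\vw)\ue^{-(1+\l)(t_b-s)}\ud{s},
\end{eqnarray}
with $t_b$ the backward exit time (\ref{exit time}). Because $\ue^{-(1+\l)(\cdot)}\leq\ue^{-(\cdot)}$, the contributions of $g$ and $f$ are controlled by $\im{g}{\Gamma^-}+\im{f}{\Omega\times\s^1}$ uniformly in $\l$, so only the Duhamel term carrying $\bar u_{\l}$ needs attention. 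Writing $\bar u_{\l}(\vec y)=\frac{1}{2\pi}\int_{\s^1}u_{\l}(\vec y,\vw')\ud{\vw'}$ and substituting the same representation once more (the \emph{double Duhamel} step) turns this term into an iterated integral in $(\vw,s)$ and $(\vw',s')$ whose innermost factor is again $\bar u_{\l}$, now evaluated along a second characteristic.

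The key step is to evaluate the inner velocity--time integral $\int_{\s^1}\int_0^{t_b'}\ue^{-r'}\bar u_{\l}(\vec y-\e r'\vw')\ud{r'}\ud{\vw'}$ by switching to polar coordinates $\vec z=\vec y-\e r'\vw'$ centered at $\vec y$. Since $\abs{\vec z-\vec y}=\e r'$ and $\ud{r'}\ud{\vw'}=\ud{\vec z}/(\e\abs{\vec z-\vec y})$, this integral becomes
\begin{eqnarray}
\frac{1}{\e}\int_{\Omega}\frac{\ue^{-\abs{\vec z-\vec y}/\e}}{\abs{\vec z-\vec y}}\,\bar u_{\l}(\vec z)\ud{\vec z},
\end{eqnarray}
a convolution against a kernel that is integrable in $\r^2$ but, owing to the $\abs{\vec z-\vec y}^{-1}$ singularity, just fails to be square integrable. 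I would therefore split the $\vec z$-integral at radius $\abs{\vec z-\vec y}=\kappa$. On the near region $\abs{\vec z-\vec y}<\kappa$ the kernel has $L^1$-mass $O(\kappa/\e)$, so this piece is bounded by $C(\kappa/\e)\im{u_{\l}}{\Omega\times\s^1}$; on the far region $\abs{\vec z-\vec y}\geq\kappa$ the kernel is square integrable with $L^2$-norm $O((\e\kappa)^{-1/2})$, so Cauchy--Schwarz bounds this piece by $C(\e\kappa)^{-1/2}\tm{u_{\l}}{\Omega\times\s^1}$. Choosing $\kappa=c\,\e$ makes the near term at most $\half\im{u_{\l}}{\Omega\times\s^1}$ while the far term becomes $C\e^{-1}\tm{u_{\l}}{\Omega\times\s^1}$. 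This singular, non-$L^2$ kernel is exactly where the argument is delicate: controlling the near-diagonal (grazing) contribution so that it can be absorbed, while keeping all constants independent of $\l$, is the main obstacle.

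Collecting the pieces would yield
\begin{eqnarray}
\im{u_{\l}}{\Omega\times\s^1}\leq C(\Omega)\big(\im{g}{\Gamma^-}+\im{f}{\Omega\times\s^1}\big)+\half\im{u_{\l}}{\Omega\times\s^1}+\frac{C(\Omega)}{\e}\tm{u_{\l}}{\Omega\times\s^1},
\end{eqnarray}
so the term $\half\im{u_{\l}}{\Omega\times\s^1}$ can be absorbed on the left. Inserting the $L^2$ bound of Theorem \ref{LT estimate} for $\tm{u_{\l}}{\Omega\times\s^1}$ converts the last term into $C(\Omega)(\e^{-3}\tm{f}{\Omega\times\s^1}+\e^{-3/2}\tm{g}{\Gamma^-})$, and since $\Omega$ is bounded we have $\tm{f}{\Omega\times\s^1}\leq C\im{f}{\Omega\times\s^1}$ and $\tm{g}{\Gamma^-}\leq C\im{g}{\Gamma^-}$. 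This produces the claimed $\l$-uniform bound with the powers $\e^{-3}$ and $\e^{-3/2}$; passing to the limit $\l\to0$ as described in the first paragraph then completes the proof.
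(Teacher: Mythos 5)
Your proposal is correct and follows essentially the same route as the paper: a double Duhamel iteration, splitting the innermost average into a near-grazing piece absorbed into $\im{u}{\Omega\times\s^1}$ and a far piece controlled by Cauchy--Schwarz together with the $L^2$ bound of Theorem \ref{LT estimate}. Your polar-coordinate kernel $\ue^{-\abs{\vec z-\vec y}/\e}/(\e\abs{\vec z-\vec y})$ with the cut at $\abs{\vec z-\vec y}=c\e$ is exactly the paper's change of variables with Jacobian $\e^2(s_b-r)$ and cut at $s_b-r=\delta$, merely rewritten, and your choice to run the argument on the penalized problem before letting $\l\to0$ is a harmless (indeed slightly cleaner) variant.
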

\begin{proof}
We divide the proof into several steps to bootstrap an $L^2$ solution to an $L^{\infty}$ solution:\\
\ \\
Step 1: Double Duhamel's iterations.\\
The characteristics of the equation (\ref{neutron}) is given by
(\ref{character}). Hence, we can rewrite the equation
(\ref{neutron}) along the characteristics as
\begin{eqnarray}
u(\vx,\vw)&=&g(\vx-\e t_b\vw,\vw)\ue^{-t_b}+\int_{0}^{t_b}f(\vx-\e(t_b-s)\vw,\vw)\ue^{-(t_b-s)}\ud{s}\\
&&+\frac{1}{2\pi}\int_{0}^{t_b}\bigg(\int_{\s^1}u(\vx-\e(t_b-s)\vw,\vw_t)\ud{\vw_t}\bigg)\ue^{-(t_b-s)}\ud{s}\nonumber.
\end{eqnarray}
where the backward exit time $t_b$ is defined as (\ref{exit time}).
Note we have replaced $\bar u$ by the integral of $u$ over the dummy
velocity variable $\vw_t$. For the last term in this formulation, we
apply the Duhamel's principle again to $u(\vx-\e(t_b-s)\vw,\vw_t)$
and obtain
\begin{eqnarray}\label{well-posedness temp 8}
u(\vx,\vw)&=&g(\vx-\e t_b\vw,\vw)\ue^{-t_b}+\int_{0}^{t_b}f(\vx-\e(t_b-s)\vw,\vw)\ue^{-(t_b-s)}\ud{s}\\
&&+\frac{1}{2\pi}\int_{0}^{t_b}\int_{\s^1}g(\vx-\e(t_b-s)\vw-\e
s_b\vw_t,\vw_t)\ue^{-s_b}\ud{\vw_t}\ue^{-(t_b-s)}\ud{s}\nonumber\\
&&+\frac{1}{2\pi}\int_{0}^{t_b}\int_{\s^1}\bigg(\int_{0}^{s_b}f(\vx-\e(t_b-s)\vw-\e
(s_b-r)\vw_t,\vw_t)\ue^{-(s_b-r)}\ud{r}\bigg)\ud{\vw_t}\ue^{-(t_b-s)}\ud{s}\nonumber\\
&&+
\frac{1}{(2\pi)^2}\int_{0}^{t_b}\int_{\s^1}\ue^{-(t_b-s)}\bigg(\int_{0}^{s_b}\int_{\s^1}\ue^{-(s_b-r)}u(\vx-\e(t_b-s)\vw-\e
(s_b-r)\vw_t,\vw_s)\ud{\vw_s}\ud{r}\bigg)\ud{\vw_t}\ud{s}\nonumber,
\end{eqnarray}
where we introduce another dummy velocity variable $\vw_s$ and
\begin{eqnarray}
s_b(\vx,\vw,s,\vw_t)=\inf\{r\geq0: (\vx-\e(t_b-s)\vw-\e
r\vw_t,\vw_t)\in\Gamma^-\}.
\end{eqnarray}
\ \\
Step 2: Estimates of all but the last term in (\ref{well-posedness temp 8}).\\
We can directly estimate as follows:
\begin{eqnarray}\label{im temp 1}
\abs{g(\vx-\e t_b\vw,\vw)e^{-t_b}}\leq\im{g}{\Gamma^-},
\end{eqnarray}
\begin{eqnarray}\label{im temp 2}
\abs{\frac{1}{2\pi}\int_{0}^{t_b}\int_{\s^1}g(\vx-\e(t_b-s)\vw-\e
s_b\vw_t,\vw_t)\ue^{-s_b}\ud{\vw_t}\ue^{-(t_b-s)}\ud{s}} \leq
\im{g}{\Gamma^-},
\end{eqnarray}
\begin{eqnarray}\label{im temp 3}
\abs{\int_{0}^{t_b}f(\vx-\e(t_b-s)\vw,\vw)\ue^{-(t_b-s)}\ud{s}}\leq
\im{f}{\Omega\times\s^1},
\end{eqnarray}
\begin{eqnarray}\label{im temp 4}
\\
\abs{\frac{1}{2\pi}\int_{0}^{t_b}\int_{\s^1}\bigg(\int_{0}^{s_b}f(\vx-\e(t_b-s)\vw-\e
(s_b-r)\vw_t,\vw_t)\ue^{-(s_b-r)}\ud{r}\bigg)\ud{\vw_t}\ue^{-(t_b-s)}\ud{s}}
\leq \im{f}{\Omega\times\s^1}\nonumber.
\end{eqnarray}
\ \\
Step 3: Estimates of the last term in (\ref{well-posedness temp 8}).\\
Now we decompose the last term in (\ref{well-posedness temp 8}) as
\begin{eqnarray}
\int_{0}^{t_b}\int_{\s^1}\int_0^{s_b}\int_{\s^1}=\int_{0}^{t_b}\int_{\s^1}\int_{s_b-r\leq\delta}\int_{\s^1}+
\int_{0}^{t_b}\int_{\s^1}\int_{s_b-r\geq\delta}\int_{\s^1}=I_1+I_2,
\end{eqnarray}
for some $\delta>0$. We can estimate $I_1$ directly as
\begin{eqnarray}\label{im temp 5}
\abs{I_1}
&\leq&\int_{0}^{t_b}\ue^{-(t_b-s)}\bigg(\int_{\max\{0,s_b-\delta\}}^{s_b}\im{u}{\Omega\times\s^1}\ud{r}\bigg)\ud{s}\leq\delta\im{u}{\Omega\times\s^1}.
\end{eqnarray}
Then we can bound $I_2$ as
\begin{eqnarray}
\\
I_2\leq
\int_{0}^{t_b}\int_{\s^1}\int_{0}^{\max\{0,s_b-\delta\}}\int_{\s^1}\abs{u(\vx-\e(t_b-s)\vw-\e
(s_b-r)\vw_t,\vw_s)}\ue^{-(t_b-s)}\ue^{-(s_b-r)}\ud{\vw_s}\ud{r}\ud{\vw_t}\ud{s}.\nonumber
\end{eqnarray}
By the definition of $t_b$ and $s_b$, we always have
$\vx-\e(t_b-s)\vw-\e (s_b-r)\vw_t\in\bar\Omega$. Hence, we may interchange the order of integration
and apply H\"older's inequality to obtain
\begin{eqnarray}
\abs{I_2}&\leq&\bigg(\int_{0}^{t_b}\int_{\s^1}\int_{0}^{\max\{0,s_b-\delta\}}\int_{\s^1}\ue^{-(t_b-s)}\ue^{-(s_b-r)}\ud{\vw_s}\ud{r}\ud{\vw_t}\ud{s}\bigg)^{1/2}\\
&&\bigg(\int_{0}^{t_b}\int_{\s^1}\int_{0}^{\max\{0,s_b-\delta\}}\int_{\s^1}{\bf{1}}_{\vx-\e(t_b-s)\vw-\e (s_b-r)\vw_t\in\bar\Omega}\no\\
&&\abs{u}^2(\vx-\e(t_b-s)\vw-\e
(s_b-r)\vw_t,\vw_s)\ue^{-(t_b-s)}\ue^{-(s_b-r)}\ud{\vw_s}\ud{r}\ud{\vw_t}\ud{s}\bigg)^{1/2}\no\\
&\leq&\bigg(\int_{0}^{t_b}\int_{\s^1}\int_{0}^{\max\{0,s_b-\delta\}}\int_{\s^1}{\bf{1}}_{\vx-\e(t_b-s)\vw-\e (s_b-r)\vw_t\in\bar\Omega}\no\\
&&\abs{u}^2(\vx-\e(t_b-s)\vw-\e
(s_b-r)\vw_t,\vw_s)\ue^{-(t_b-s)}\ue^{-(s_b-r)}\ud{\vw_s}\ud{r}\ud{\vw_t}\ud{s}\bigg)^{1/2}.\no
\end{eqnarray}

We may write it in a new variable $\psi$ as
$\vw_t=(\cos\psi,\sin\psi)$. By the change of the variable
$[-\pi,\pi] \times \r' \rightarrow \Omega: (\psi,r) \rightarrow
(y_1,y_2) =\vx-\e(t_b-s)\vw-\e(s_b-r)\vw_t$, i.e.
\begin{eqnarray}
\left\{
\begin{array}{rcl}
y_1&=&x_1-\e(t_b-s)w_1-\e(s_b-r)\cos\psi,\\
y_2&=&x_2-\e(t_b-s)w_2-\e(s_b-r)\sin\psi.
\end{array}
\right.
\end{eqnarray}
Therefore, for $s_b-r\geq\delta$, we can directly compute the Jacobian
\begin{eqnarray}
\abs{\frac{\p{(y_1,y_2)}}{\p{(\psi,r)}}}=\abs{\abs{\begin{array}{rc}
\e(s_b-r)\sin\psi&\e\cos\psi\\
-\e(s_b-r)\cos\psi&\e\sin\psi
\end{array}}}=\e^2(s_b-r) \geq\e^2\delta.
\end{eqnarray}
Then we may further utilize Cauchy's inequality and the $L^2$
estimate of $u$ in Theorem \ref{LT estimate} to obtain we get
\begin{eqnarray}\label{im temp 6}
\abs{I_2} &\leq & \frac{C}{\e \sqrt{\delta}} \nm{u}_{L^2(\Omega \times \s^1)} \\
&\leq&\frac{C(\Omega)}{\sqrt{\delta}}\bigg(\frac{1}{\e^{3}}\tm{f}{\Omega\times\s^1}+\frac{1}{\e^{3/2}}\tm{g}{\Gamma^-}\bigg)\nonumber\\
&\leq&\frac{C(\Omega)}{\sqrt{\delta}}\bigg(\frac{1}{\e^{3}}\im{f}{\Omega\times\s^1}+\frac{1}{\e^{3/2}}\im{g}{\Gamma^-}\bigg)\nonumber.
\end{eqnarray}
\ \\
In summary, collecting (\ref{im temp 1}), (\ref{im temp 2}),
(\ref{im temp 3}), (\ref{im temp 4}), (\ref{im temp 5}) and (\ref{im
temp 6}), for fixed $0<\delta<1$, we have
\begin{eqnarray}
\abs{u(\vx,\vw)}\leq \delta
\im{u}{\Omega\times\s^1}+\frac{C(\Omega)}{\sqrt{\delta}}
\bigg(\frac{1}{\e^{3}}\im{f}{\Omega\times\s^1}+\frac{1}{\e^{3/2}}\im{g}{\Gamma^-}\bigg).
\end{eqnarray}
Taking supremum of $u$ over all $(\vx,\vw)$ and absorbing
$\im{u}{\Omega\times\s^1}$, for fixed $0<\delta\leq1/2$, we get
\begin{eqnarray}
\im{u}{\Omega\times\s^1}\leq C(\Omega)\bigg(
\frac{1}{\e^{3}}\im{f}{\Omega\times\s^1}+\frac{1}{\e^{3/2}}\im{g}{\Gamma^-}\bigg).
\end{eqnarray}

\end{proof}

\section{Diffusive Limit}

\noindent{\bf The proof of \ref{main 1}}
We divide the proof into several steps:\\

\noindent Step 1: Remainder definitions.\\
We may rewrite the asymptotic expansion as follows:
\begin{eqnarray}
u^{\e}&\sim&\sum_{k=0}^{\infty}\e^k\u_k+\sum_{k=0}^{\infty}\e^k\ub_{+,k}+\sum_{k=0}^{\infty}\e^k\ub_{-,k}.
\end{eqnarray}
The remainder can be defined as
\begin{eqnarray}\label{pf 1}
R_N&=&u^{\e}-\sum_{k=0}^{N}\e^k\u_k-\sum_{k=0}^{\infty}\e^k\ub_{+,k}-\sum_{k=0}^{\infty}\e^k\ub_{-,k}=u^{\e}-\q_N-\qb_{+,N}-\qb_{-,N},
\end{eqnarray}
where
\begin{eqnarray}
\q_N&=&\sum_{k=0}^{N}\e^k\u_k,\\
\qb_{+,N}&=&\sum_{k=0}^{N}\e^k\ub_{+,k},\\
\qb_{-,N}&=&\sum_{k=0}^{N}\e^k\ub_{-,k}.
\end{eqnarray}
Noting the equation (\ref{transport temp}) is equivalent to the
equation (\ref{transport}), we write $\ll$ to denote the neutron
transport operator as follows:
\begin{eqnarray}
\ll u&=&\e\vw\cdot\nx u+u-\bar u\\
&=&\pm\sin\phi\frac{\p
u}{\p\eta_{\pm}}-\frac{\e}{R_{\pm}\mp\e\eta_{\pm}}\cos\phi\bigg(\frac{\p
u}{\p\phi}+\frac{\p u}{\p\theta}\bigg)+u-\bar u.\nonumber
\end{eqnarray}
\ \\
Step 2: Estimates of $\ll \q_N$.\\
The interior contribution can be estimated as
\begin{eqnarray}
\ll\q_0=\e\vw\cdot\nx \q_0+\q_0-\bar \q_0&=&\e\vw\cdot\nx
\u_0+(\u_0-\bu_0)=\e\vw\cdot\nx \u_0.
\end{eqnarray}
We have
\begin{eqnarray}
\abs{\e\vw\cdot\nx \u_0}\leq C\e\abs{\nx \u_0}\leq C\e.
\end{eqnarray}
This implies
\begin{eqnarray}
\abs{\ll \q_0}\leq C\e.
\end{eqnarray}
Similarly, for higher order term, we can estimate
\begin{eqnarray}
\ll\q_N=\e\vw\cdot\nx \q_N+\q_N-\bar \q_N&=&\e^{N+1}\vw\cdot\nx
\u_N.
\end{eqnarray}
We have
\begin{eqnarray}
\abs{\e^{N+1}\vw\cdot\nx \u_N}\leq C\e^{N+1}\abs{\nx \u_N}\leq
C\e^{N+1}.
\end{eqnarray}
This implies
\begin{eqnarray}\label{pf 2}
\abs{\ll \q_N}\leq C\e^{N+1}.
\end{eqnarray}
\ \\
Step 3: Estimates of $\ll \qb_{\pm,N}$.\\
The boundary layer solution is
$\ub_k=(f_{\pm,k}^{\e}-f_{\pm,k}^{\e}(\infty))\cdot\psi_0=\v_{\pm,k}\psi_0$
where $f_{\pm,k}^{\e}(\eta_{\pm},\theta,\phi)$ solves the $\e$-Milne
problem and $\v_{\pm,k}=f_{\pm,k}^{\e}-f_{\pm,k}^{\e}(\infty)$.
Notice $\psi_0\psi=\psi_0$, so the boundary layer contribution can
be estimated as
\begin{eqnarray}\label{remainder temp 1}
&&\ll\qb_{\pm,0}\\
&=&\pm\sin\phi\frac{\p
\qb_{\pm,0}}{\p\eta_{\pm}}-\frac{\e}{R_{\pm}\mp\e\eta_{\pm}}\cos\phi\bigg(\frac{\p
\qb_{\pm,0}}{\p\phi}+\frac{\p
\qb_{\pm,0}}{\p\theta}\bigg)+\qb_{\pm,0}-\bar
\qb_{\pm,0}\no\\
&=&\pm\sin\phi\bigg(\psi_0\frac{\p
\v_{\pm,0}}{\p\eta_{\pm}}+\v_{\pm,0}\frac{\p\psi_0}{\p\eta_{\pm}}\bigg)-\frac{\psi_0\e}{R_{\pm}\mp\e\eta_{\pm}}\cos\phi\bigg(\frac{\p
\v_{\pm,0}}{\p\phi}+\frac{\p \v_{\pm,0}}{\p\theta}\bigg)+\psi_0 \v_{\pm,0}-\psi_0\bar\v_{\pm,0}\nonumber\\
&=&\pm\sin\phi\bigg(\psi_0\frac{\p
\v_{\pm,0}}{\p\eta_{\pm}}+\v_{\pm,0}\frac{\p\psi_0}{\p\eta_{\pm}}\bigg)-\frac{\psi_0\psi\e}{R_{\pm}\mp\e\eta_{\pm}}\cos\phi\bigg(\frac{\p
\v_{\pm,0}}{\p\phi}+\frac{\p \v_{\pm,0}}{\p\theta}\bigg)+\psi_0 \v_{\pm,0}-\psi_0\bar\v_{\pm,0}\nonumber\\
&=&\psi_0\bigg(\pm\sin\phi\frac{\p
\v_{\pm,0}}{\p\eta_{\pm}}-\frac{\e\psi}{R_{\pm}\mp\e\eta_{\pm}}\cos\phi\frac{\p
\v_{\pm,0}}{\p\phi}+\v_{\pm,0}-\bar\v_{\pm,0}\bigg)\pm\sin\phi
\frac{\p\psi_0}{\p\eta_{\pm}}\v_{\pm,0}-\frac{\psi_0\e}{R_{\pm}\mp\e\eta_{\pm}}\cos\phi\frac{\p
\v_{\pm,0}}{\p\theta}\nonumber\\
&=&\pm\sin\phi
\frac{\p\psi_0}{\p\eta_{\pm}}\v_{\pm,0}-\frac{\psi_0\e}{R_{\pm}\mp\e\eta_{\pm}}\cos\phi\frac{\p
\v_{\pm,0}}{\p\theta}\nonumber.
\end{eqnarray}
Since $\psi_0=1$ when $\eta_{\pm}\leq 1/(4\e)(R_{+}-R_{-})$, the
effective region of $\px\psi_0$ is $\eta\geq1/(4\e)(R_{+}-R_{-})$
which is further and further from the origin as $\e\rt0$. By Theorem
4.13 in \cite{AA003} and Theorem \ref{Milne theorem 2}, the first
term in (\ref{remainder temp 1}) can be controlled as
\begin{eqnarray}
\abs{\pm\sin\phi\frac{\p\psi_0}{\p\eta_{\pm}}\v_{\pm,0}}&\leq&
Ce^{-\frac{K_0}{\e}}\leq C\e.
\end{eqnarray}
For the second term in (\ref{remainder temp 1}), we have
\begin{eqnarray}
\abs{-\frac{\psi_0\e}{R_{\pm}\mp\e\eta_{\pm}}\cos\phi\frac{\p
\v_{\pm,0}}{\p\theta}}&\leq&C\e\abs{\frac{\p
\v_{\pm,0}}{\p\theta}}\leq C\e.
\end{eqnarray}
This implies
\begin{eqnarray}
\abs{\ll \qb_{\pm,0}}\leq C\e.
\end{eqnarray}
Similarly, for higher order term, we can estimate
\begin{eqnarray}\label{remainder temp 2}
\ll\qb_{\pm,N}&=&\sin\phi\frac{\p
\qb_{\pm,N}}{\p\eta_{\pm}}-\frac{\e}{R_{\pm}\mp\e\eta_{\pm}}\cos\phi\bigg(\frac{\p
\qb_{\pm,N}}{\p\phi}+\frac{\p
\qb_{\pm,N}}{\p\theta}\bigg)+\qb_{\pm,N}-\bar
\qb_{\pm,N}\\
&=&\sum_{i=0}^N\e^i\sin\phi
\frac{\p\psi_0}{\p\eta_{\pm}}\v_{\pm,i}-\frac{\psi_0\e^{N+1}}{R_{\pm}\mp\e\eta_{\pm}}\cos\phi\frac{\p
\v_{\pm,N}}{\p\theta}\nonumber.
\end{eqnarray}
Away from the origin, the first term in (\ref{remainder temp 2}) can
be controlled as
\begin{eqnarray}
\abs{\sum_{i=0}^N\e^i\sin\phi
\frac{\p\psi_0}{\p\eta_{\pm}}\v_{\pm,i}}&\leq&
Ce^{-\frac{K_0}{\e}}\leq C\e^{N+1}.
\end{eqnarray}
For the second term in (\ref{remainder temp 2}), we have
\begin{eqnarray}
\abs{-\frac{\psi_0\e^{N+1}}{R_{\pm}\mp\e\eta_{\pm}}\cos\phi\frac{\p
\v_{\pm,N}}{\p\theta}}&\leq&C\e^{N+1}\abs{\frac{\p
\v_{\pm,N}}{\p\theta}}\leq C\e^{N+1}.
\end{eqnarray}
This implies
\begin{eqnarray}\label{pf 3}
\abs{\ll \qb_N}\leq C\e^{N+1}.
\end{eqnarray}
\ \\
Step 4: Diffusive limit.\\
\underline{\it Proof of (\ref{main theorem 1})}. In summary, since $\ll u^{\e}=0$, collecting (\ref{pf 1}), (\ref{pf
2}) and (\ref{pf 3}), we can prove
\begin{eqnarray}
\abs{\ll R_N}\leq C\e^{N+1}.
\end{eqnarray}
Consider the asymptotic expansion to $N=3$, then the remainder $R_3$
satisfies the equation
\begin{eqnarray}
\left\{
\begin{array}{rcl}
\e \vw\cdot\nabla_x R_3+R_3-\bar R_3&=&\ll R_3\ \ \text{for}\ \ \vx\in\Omega,\\
R_3(\vx_0,\vw)&=&0\ \ \text{for}\ \ \vw\cdot\vec n<0\ \ \text{and}\
\ \vx_0\in\p\Omega.
\end{array}
\right.
\end{eqnarray}
By Theorem 3.6 in \cite{AA003}, we have
\begin{eqnarray}
\im{R_3}{\Omega\times\s^1}\leq \frac{C(\Omega)}{\e^{3}}\im{\ll
R_3}{\Omega\times\s^1}\leq\frac{C(\Omega)}{\e^{3}}(C\e^4)\leq C(\Omega)\e.
\end{eqnarray}
Hence, we have
\begin{eqnarray}
\nm{u^{\e}-\sum_{k=0}^3\e^k\u_k-\sum_{k=0}^3\e^k\ub_{+,k}-\sum_{k=0}^3\e^k\ub_{-,k}}_{L^{\infty}(\Omega\times\s^1)}=O(\e).
\end{eqnarray}
Since it is easy to see
\begin{eqnarray}
\nm{\sum_{k=1}^3\e^k\u_k+\sum_{k=1}^3\e^k\ub_{+,k}+\sum_{k=1}^3\e^k\ub_{-,k}}_{L^{\infty}(\Omega\times\s^1)}=O(\e),
\end{eqnarray}
our result naturally follows. This completes the proof of (\ref{main
theorem 1}).\\
\ \\
Step 5: Counterexample of the expansion.\\

\noindent\underline{{\it Proof of (\ref{main theorem 2})}}. It is divided it into the following steps.

(1). The classical Milne problem.

By (\ref{classical temp 1}), the solution $\f_{\pm,0}$ satisfies the
Milne problem
\begin{eqnarray}
\left\{
\begin{array}{rcl}\displaystyle
\pm\sin(\theta+\xi)\frac{\p \f_{\pm,0}}{\p\eta_{\pm}}+\f_{\pm,0}-\bar \f_{\pm,0}&=&0,\\
\f_{\pm,0}(0,\theta,\xi)&=&g_{\pm}(\theta,\xi)\ \ \text{for}\ \
\pm\sin(\theta+\xi)>0,\\\rule{0ex}{1em}
\lim_{\eta_{\pm}\rt\infty}\f_{\pm,0}(\eta_{\pm},\theta,\xi)&=&f_{\pm,0}(\infty,\theta).
\end{array}
\right.
\end{eqnarray}
For convenience of comparison, we make the substitution
$\phi=\theta+\xi$ to obtain
\begin{eqnarray}
\left\{
\begin{array}{rcl}\displaystyle
\pm\sin\phi\frac{\p \f_{\pm,0}}{\p\eta_{\pm}}+\f_{\pm,0}-\bar \f_{\pm,0}&=&0,\\
\f_{\pm,0}(0,\theta,\phi)&=&g_{\pm}(\theta,\phi)\ \ \text{for}\ \
\pm\sin\phi>0,\\\rule{0ex}{1em}
\lim_{\eta_{\pm}\rt\infty}\f_{\pm,0}(\eta_{\pm},\theta,\phi)&=&f_{\pm,0}(\infty,\theta).
\end{array}
\right.
\end{eqnarray}
Assume (\ref{main theorem 2}) is incorrect, i.e.
\begin{eqnarray}
\lim_{\e\rt0}\lnm{(\uc_0+\ubc_{+,0}+\ubc_{-,0})-(\u_0+\ub_{+,0}+\ub_{-,0})}=0.
\end{eqnarray}
Since the boundary $g_{\pm}(\phi)$ independent of $\theta$, by
(\ref{classical temp 1}) and (\ref{expansion temp 9}), it is obvious
the limit of zeroth order boundary layer $f_{\pm,0}(\infty,\theta)$
and $f_{\pm,0}^{\e}(\infty,\theta)$ satisfy
$f_{\pm,0}(\infty,\theta)=C_{\pm,1}$ and
$f_{\pm,0}^{\e}(\infty,\theta)=C_{\pm,2}$ for some constant
$C_{\pm,1}$ and $C_{\pm,2}$ independent of $\theta$. It is easy to
see $C_{+,1}=C_{+,2}=0$. By (\ref{classical temp 2}) and
(\ref{expansion temp 8}), we can derive the interior solutions are
smooth in the domain $\Omega$. Hence, for $\abs{\eta_{-}}\leq\e$ we
may further derive
\begin{eqnarray}\label{compare temp 5}
\lim_{\e\rt0}\lnm{(f_{-,0}(\infty)+\ubc_{-,0})-(f_{-,0}^{\e}(\infty)+\ub_{-,0})}=0.
\end{eqnarray}
For $0\leq\eta\leq 1/(2\e)(R_{+}-R_{-})$, we have $\psi_0=1$, which
means $\f_{-,0}=\ubc_{-,0}+f_{-,0}(\infty)$ and
$f_{-,0}^{\e}=\ub_{-,0}+f_{-,0}^{\e}(\infty)$ on $[0,\e]$. Define
$u=f_{-,0}+2$, $U=f_{-,0}^{\e}+2$ and $G_{-}=g_{-}+2=\cos\phi+2$,
using the substitution and notation in Section 3, then
$u(\eta,\phi)$ satisfies the equation
\begin{eqnarray}\label{compare flat equation}
\left\{
\begin{array}{rcl}\displaystyle
\sin\phi\frac{\p u}{\p\eta}+u-\bar u&=&0,\\
u(0,\phi)&=&G(\phi)\ \ \text{for}\ \ \sin\phi>0,\\\rule{0ex}{1em}
\lim_{\eta\rt\infty}u(\eta,\phi)&=&2+f_{0}(\infty),
\end{array}
\right.
\end{eqnarray}
and $U(\eta,\phi)$ satisfies the equation
\begin{eqnarray}\label{compare force equation}
\left\{
\begin{array}{rcl}\displaystyle
\sin\phi\frac{\p U}{\p\eta}-F(\e;\eta)\cos\phi \frac{\p
U}{\p\phi}+U-\bar
U&=&0,\\
U(0,\phi)&=&G(\phi)\ \ \text{for}\ \ \sin\phi>0,\\\rule{0ex}{1em}
\lim_{\eta\rt\infty}U(\eta,\phi)&=&2+f_{0}^{\e}(\infty).
\end{array}
\right.
\end{eqnarray}
Based on (\ref{compare temp 5}), we have
\begin{eqnarray}
\lim_{\e\rt0}\lnm{U(\eta,\phi)-u(\eta,\phi)}=0.
\end{eqnarray}
Then it naturally implies
\begin{eqnarray}
\lim_{\e\rt0}\lnm{\bar U(\eta)-\bar u(\eta)}=0.
\end{eqnarray}

(2) Continuity of $\bar u$ and $\bar U$ at $\eta=0$.

For the problem (\ref{compare flat equation}), we have for any
$r_0>0$
\begin{eqnarray}
\abs{\bar u(\eta)-\bar
u(0)}&\leq&\frac{1}{2\pi}\bigg(\int_{\sin\phi\leq
r_0}\abs{u(\eta,\phi)-u(0,\phi)}\ud{\phi}+\int_{\sin\phi\geq
r_0}\abs{u(\eta,\phi)-u(0,\phi)}\ud{\phi}\bigg).
\end{eqnarray}
Since we have shown $u\in L^{\infty}([0,\infty)\times[-\pi,\pi))$,
then for any $\delta>0$, we can take $r_0$ sufficiently small such
that
\begin{eqnarray}
\frac{1}{2\pi}\int_{\sin\phi\leq
r_0}\abs{u(\eta,\phi)-u(0,\phi)}\ud{\phi}&\leq&\frac{C}{2\pi}\arcsin
r_0\leq \frac{\delta}{2}.
\end{eqnarray}
For fixed $r_0$ satisfying above requirement, we estimate the
integral on $\sin\phi\geq r_0$. By Ukai's trace theorem, $u(0,\phi)$
is well-defined in the domain $\sin\phi\geq r_0$ and is continuous.
Also, by consider the relation
\begin{eqnarray}
\frac{\p u}{\p\eta}(0,\phi)=\frac{\bar u(0)-u(0,\phi)}{\sin\phi},
\end{eqnarray}
we can obtain in this domain $\px u$ is bounded, which further
implies $u(\eta,\phi)$ is uniformly continuous at $\eta=0$. Then
there exists $\delta_0>0$ sufficiently small, such that for any
$0\leq\eta\leq\delta_0$, we have
\begin{eqnarray}
\frac{1}{2\pi}\int_{\sin\phi\geq
r_0}\abs{u(\eta,\phi)-u(0,\phi)}\ud{\phi}&\leq&\frac{1}{2\pi}\int_{\sin\phi\geq
r_0}\frac{\delta}{2}\ud{\phi}\leq\frac{\delta}{2}.
\end{eqnarray}
In summary, we have shown for any $\delta>0$, there exists
$\delta_0>0$ such that for any $0\leq\eta\leq\delta_0$,
\begin{eqnarray}
\abs{\bar u(\eta)-\bar
u(0)}\leq\frac{\delta}{2}+\frac{\delta}{2}=\delta.
\end{eqnarray}
Hence, $\bar u(\eta)$ is continuous at $\eta=0$. By a similar
argument along the characteristics, we can show $\bar U(\eta,\phi)$
is also continuous at $\eta=0$.

In the following, by the continuity, we assume for arbitrary
$\delta>0$, there exists a $\delta_0>0$ such that for any
$0\leq\eta\leq\delta_0$, we have
\begin{eqnarray}
\abs{\bar u(\eta)-\bar u(0)}&\leq&\delta\label{compare temp 1},\\
\abs{\bar U(\eta)-\bar U(0)}&\leq&\delta\label{compare temp 2}.
\end{eqnarray}

(3). The $\e-$Milne formulation.

We consider the solution at a specific point $(\eta,\phi)=(n\e,\e)$
for some fixed $0<n\leq1/2$. The solution along the characteristics
can be rewritten as follows:
\begin{eqnarray}\label{compare temp 3}
u(n\e,\e)=G(\e)\ue^{-\frac{1}{\sin\e}n\e}
+\int_0^{n\e}\ue^{-\frac{1}{\sin\e}(n\e-\k)}\frac{1}{\sin\e}\bar
u(\k)\ud{\k},
\end{eqnarray}
\begin{eqnarray}\label{compare temp 4}
U(n\e,\e)=G(\e_0)\ue^{-\int_0^{n\e}\frac{1}{\sin\phi(\zeta)}\ud{\zeta}}
+\int_0^{n\e}\ue^{-\int_{\k}^{n\e}\frac{1}{\sin\phi(\zeta)}\ud{\zeta}}\frac{1}{\sin\phi(\k)}\bar
U(\k)\ud{\k},
\end{eqnarray}
where we have the conserved energy along the characteristics
\begin{eqnarray}
E(\eta,\phi)=\cos\phi \ue^{V(\eta)},
\end{eqnarray}
in which $(0,\e_0)$ and $(\zeta,\phi(\zeta))$ are in the same
characteristics of $(n\e,\e)$.

(4) Estimates of (\ref{compare temp 3}).

We turn to the Milne problem for $u$. We have the natural estimate
\begin{eqnarray}
\int_0^{n\e}\ue^{-\frac{1}{\sin\e}(n\e-\k)}\frac{1}{\sin\e}\ud{\k}&=&\int_0^{n\e}\ue^{-\frac{1}{\e}(n\e-\k)}\frac{1}{\e}\ud{\k}+o(\e)\\
&=&\ue^{-n}\int_0^{n\e}\ue^{\frac{\k}{\e}}\frac{1}{\e}\ud{\k}+o(\e)\nonumber\\
&=&\ue^{-n}\int_0^ne^{\zeta}\ud{\zeta}+o(\e)\nonumber\\
&=&(1-\ue^{-n})+o(\e)\nonumber.
\end{eqnarray}
Then for $0<\e\leq\delta_0$, we have $\abs{\bar u(0)-\bar
u(\k)}\leq\delta$, which implies
\begin{eqnarray}
\int_0^{n\e}\ue^{-\frac{1}{\sin\e}(n\e-\k)}\frac{1}{\sin\e}\bar
u(\k)\ud{\k}&=&
\int_0^{n\e}\ue^{-\frac{1}{\sin\e}(n\e-\k)}\frac{1}{\sin\e}\bar u(0)\ud{\k}+O(\delta)\\
&=&(1-\ue^{-n})\bar u(0)+o(\e)+O(\delta)\nonumber.
\end{eqnarray}
For the boundary data term, it is easy to see
\begin{eqnarray}
G(\e)\ue^{-\frac{1}{\sin\e}n\e}&=&\ue^{-n}G(\e)+o(\e)
\end{eqnarray}
In summary, we have
\begin{eqnarray}
u(n\e,\e)=(1-\ue^{-n})\bar u(0)+\ue^{-n}G(\e)+o(\e)+O(\delta).
\end{eqnarray}
\ \\
(5). Approximation of (\ref{compare temp 4}).

We consider the $\e$-Milne problem for $U$. For $\e<<1$ sufficiently
small, $\psi(\e)=1$. Then we may estimate
\begin{eqnarray}
\cos\phi(\zeta)\ue^{V(\zeta)}=\cos\e \ue^{V(n\e)},
\end{eqnarray}
which implies
\begin{eqnarray}
\cos\phi(\zeta)=\frac{1-\e\zeta}{1-n\e^2}\cos\e.
\end{eqnarray}
and hence
\begin{eqnarray}
\sin\phi(\zeta)=\sqrt{1-\cos^2\phi(\zeta)}=\sqrt{-\frac{\e(n\e-\zeta)(2-\e\zeta-n\e^2)}{(1-n\e^2)^2}\cos^2\e+\sin^2\e}.
\end{eqnarray}
For $\zeta\in[0,\e]$ and $n\e$ sufficiently small, by Taylor's
expansion, we have
\begin{eqnarray}
2-\e\zeta-n\e^2&=&2+o(\e),\\
\sin^2\e&=&\e^2+o(\e^3),\\
\cos^2\e&=&1-\e^2+o(\e^3).
\end{eqnarray}
Hence, we have
\begin{eqnarray}
\sin\phi(\zeta)=\sqrt{\e(\e-2n\e+2\zeta)}+o(\e^2).
\end{eqnarray}
Since $\sqrt{\e(\e-2n\e+2\zeta)}=O(\e)$, we can further estimate
\begin{eqnarray}
\frac{1}{\sin\phi(\zeta)}&=&\frac{1}{\sqrt{\e(\e-2n\e+2\zeta)}}+o(1)\\
-\int_{\k}^{n\e}\frac{1}{\sin\phi(\zeta)}\ud{\zeta}&=&-\sqrt{\frac{\e-2n\e+2\zeta}{\e}}\bigg|_{\k}^{n\e}+o(\e)
=\sqrt{\frac{\e-2n\e+2\k}{\e}}-1+o(\e).
\end{eqnarray}
Then we can easily derive the integral estimate
\begin{eqnarray}
\int_0^{n\e}\ue^{-\int_{\k}^{n\e}\frac{1}{\sin\phi(\zeta)}\ud{\zeta}}\frac{1}{\sin\phi(\k)}\ud{\k}&=&
\ue^{-1}\int_0^{n\e}\ue^{\sqrt{\frac{\e-2n\e+2\k}{\e}}}\frac{1}{\sqrt{\e(\e-2n\e+2\k)}}\ud{\k}+o(\e)\\
&=&\half \ue^{-1}\int^{\e}_{(1-2n)\e}\ue^{\sqrt{\frac{\sigma}{\e}}}\frac{1}{\sqrt{\e\sigma}}\ud{\sigma}+o(\e)\nonumber\\
&=&\half \ue^{-1}\int^{1}_{1-2n}\ue^{\sqrt{\rho}}\frac{1}{\sqrt{\rho}}\ud{\rho}+o(\e)\nonumber\\
&=&\ue^{-1}\int^{1}_{\sqrt{{1-2n}}}\ue^{t}\ud{t}+o(\e)\nonumber\\
&=&(1-\ue^{\sqrt{1-2n}-1})+o(\e)\nonumber.
\end{eqnarray}
Then for $0<\e\leq\delta_0$, we have $\abs{\bar U(0)-\bar
U(\k)}\leq\delta$, which implies
\begin{eqnarray}
\int_0^{n\e}\ue^{-\int_{\k}^{n\e}\frac{1}{\sin\phi(\zeta)}\ud{\zeta}}\frac{1}{\sin\phi(\k)}\bar
U(\k)\ud{\k}&=&
\int_0^{n\e}\ue^{-\int_{\k}^{n\e}\frac{1}{\sin\phi(\zeta)}\ud{\zeta}}\frac{1}{\sin\phi(\k)}\bar U(0)\ud{\k}+O(\delta)\\
&=&(1-\ue^{\sqrt{1-2n}-1})\bar U(0)+o(\e)+O(\delta)\nonumber.
\end{eqnarray}
For the boundary data term, since $G(\phi)$ is $C^1$, a similar
argument shows
\begin{eqnarray}
G(\e_0)\ue^{-\int_0^{n\e}\frac{1}{\sin\phi(\zeta)}\ud{\zeta}}&=&\ue^{\sqrt{1-2n}-1}G(\sqrt{1-2n}\e)+o(\e).
\end{eqnarray}
Therefore, we have
\begin{eqnarray}
U(n\e,\e)=(1-\ue^{\sqrt{1-2n}-1})\bar
U(0)+\ue^{\sqrt{1-2n}-1}G(\sqrt{1-2n}\e)+o(\e)+O(\delta).
\end{eqnarray}

(6). Approximation of (\ref{main theorem 2}).

In summary, we have the estimate
\begin{eqnarray}
u(n\e,\e)&=&(1-\ue^{-n})\bar u(0)+\ue^{-n}G(\e)+o(\e)+O(\delta),\\
U(n\e,\e)&=&U(n\e,\e)=(1-\ue^{\sqrt{1-2n}-1})\bar
U(0)+\ue^{\sqrt{1-2n}-1}G(\sqrt{1-2n}\e)+o(\e)+O(\delta).
\end{eqnarray}
The boundary data is $G=\cos\phi+2$. Then by the maximum principle
in Theorem \ref{Milne theorem 3}, we can achieve $1\leq
u(0,\phi)\leq3$ and $1\leq U(0,\phi)\leq3$. Since
\begin{eqnarray}
\bar u(0)&=&\frac{1}{2\pi}\int_{-\pi}^{\pi}u(0,\phi)\ud{\phi}
=\frac{1}{2\pi}\int_{\sin\phi>0}u(0,\phi)\ud{\phi}+\frac{1}{2\pi}\int_{\sin\phi<0}u(0,\phi)\ud{\phi}\\
&=&\frac{1}{2\pi}\int_{\sin\phi>0}(2+\cos\phi)\ud{\phi}+\frac{1}{2\pi}\int_{\sin\phi<0}u(0,\phi)\ud{\phi}\nonumber\\
&=&2+\frac{1}{2\pi}\int_{\sin\phi>0}\cos\phi\ud{\phi}+\frac{1}{2\pi}\int_{\sin\phi<0}u(0,\phi)\ud{\phi}\nonumber,
\end{eqnarray}
we naturally obtain $3/2\leq\bar u(0)\leq 5/2$. Similarly, we can
obtain $3/2\leq\bar U(0)\leq 5/2$. Furthermore, for $\e$
sufficiently small, we have
\begin{eqnarray}
G(\sqrt{1-2n}\e)&=&3+o(\e),\\
G(\e)&=&3+o(\e).
\end{eqnarray}
Hence, we can obtain
\begin{eqnarray}
u(n\e,\e)&=&\bar u(0)+\ue^{-n}(-\bar u(0)+3)+o(\e)+O(\delta),\\
U(n\e,\e)&=&\bar U(0)+\ue^{\sqrt{1-2n}-1}(-\bar
U(0)+3)+o(\e)+O(\delta).
\end{eqnarray}
Then we can see $\lim_{\e\rt0}\lnm{\bar U(0)-\bar u(0)}=0$ naturally
leads to $\lim_{\e\rt0}\lnm{(-\bar u(0)+3)-(-\bar U(0)+3)}=0$. Also,
we have $-\bar u(0)+3=O(1)$ and $-\bar U(0)+3=O(1)$. Due to the
smallness of $\e$ and $\delta$, and also $\ue^{-n}\neq
\ue^{\sqrt{1-2n}-1}$, we can obtain
\begin{eqnarray}
\abs{U(n\e,\e)-u(n\e,\e)}=O(1).
\end{eqnarray}
However, above result contradicts our assumption that
$\lim_{\e\rt0}\lnm{U(\eta,\phi)-u(\eta,\phi)}=0$ for any
$(\eta,\phi)$. This completes the proof of (\ref{main theorem 2}).

\bibliographystyle{siam}
\bibliography{Reference}

\end{document}